\documentclass[11pt]{article}
\usepackage[mathcal]{euscript}
\usepackage{bm,url}                     
\usepackage{amsfonts}
\RequirePackage{amsthm,amsmath,amsfonts,amssymb}
\RequirePackage[numbers]{natbib}
\RequirePackage[colorlinks,citecolor=blue,urlcolor=blue]{hyperref}
\RequirePackage{graphicx}

\usepackage{subcaption}
\usepackage{ulem}
\usepackage[dvipsnames]{xcolor}


\newtheorem{theorem}{Theorem}[section]
\newtheorem{lemma}[theorem]{Lemma}
\newtheorem{remark}{Remark}

\newtheorem{corollary}{Corollary}
\theoremstyle{remark}
\newtheorem{definition}[theorem]{Definition}



\newcommand{\eps}{\varepsilon}

\newcommand{\B}{\mathcal{B}}
\newcommand{\R}{\mathbb{R}}

\newcommand{\prob}{\mathbb{P}}
\newcommand{\X}{\mathbb{X}_n}

\usepackage{todonotes}             

\begin{document}
	
	\begin{center}
		\large \bf   Estimation of surface area.\normalsize
	\end{center}
	\normalsize

	\begin{center}
		Catherine Aaron$^a$, Alejandro Cholaquidis$^b$  and Ricardo Fraiman$^b$\\
		$^a$ Universit\'e Clermont Auvergne, France\\
		$^b$ Centro de Matem\'atica, Universidad de la Rep\'ublica, Uruguay\\
	\end{center}

\begin{abstract}
	We study the problem of estimating the surface area of the boundary $\partial S$ of a sufficiently smooth set
$S\subset\mathbb{R}^d$ when the available 
information is only a finite subset  $\X\subset S$.
We propose two estimators. The first makes use of the Devroye--Wise support estimator and is based on Crofton's formula, which, roughly speaking,
states that the $(d-1)$-dimensional surface area of a smooth enough set is the mean  number 
of intersections of randomly chosen lines.  For that purpose, we propose an estimator of the number of intersections of such lines with 
support based on the Devroye--Wise support estimators.  
The second surface area estimator makes use of the $\alpha$-convex hull of $\X$, which is denoted by
$C_{\alpha}(\X)$. More precisely, it is the $(d-1)$-dimensional surface area of $C_\alpha(\X)$, as denoted by $|C_\alpha(\X)|_{d-1}$, which is proven to
converge to the $(d-1)$-dimensional  surface area of $\partial S$.
Moreover, $|C_\alpha(\X)|_{d-1}$ can be computed using Crofton's formula. 

Our results depend on the Hausdorff distance between $S$ and $\X$ for the Devroye--Wise estimator, 
and the Hausdorff distance between $\partial S$ and $\partial C_{\alpha}(\X)$ for the second estimator. 
\end{abstract}

\subsection{On surface area and length estimation}

The estimation of surface areas has been extensively considered in stereology (see, for instance, \cite{bad86,bad05} and \cite{go90}).
It has also been studied as a further step in the theory of nonparametric set estimation (see  \cite{plrc08}),
and has practical applications in medical imaging (see \cite{cfrc:07}).  
In addition,  the estimation of a surface area is widely used in magnetic resonance imagining techniques  (see \cite{guney}).

The three- and two-dimensional  cases are  addressed in  \cite{ber14}, which  proposed parametric estimators
when the available data are the distances to $S$ from a sample outside the set but at a distance smaller than a given $R>0$.

The two-dimensional case has many important applications. This is also true of the three-dimensional case. For instance, surface area  is an important biological parameter in organs such as the lungs (see, for instance, \cite{sar21}). %
The higher dimensional study is also important, at least from a theoretical point of view, because in \cite{Penrose} it is shown that
the boundary surface plays an important role
as a parameter of a probability distribution, which allows us to apply plug-in methods.
To our knowledge, the only paper that tackles the  surface area estimation problem in any dimension, when only ``inside'' data are available, 
is \cite{cuepat18} and no convergence rates are given.

When, as in image analysis, one can observe $n$ data points from  two distinguishable sets of random data-points
(one from inside $S$  and the other from outside $S$), then the estimation
of the surface area of the boundary has been tackled, for any $d\geq 2$, in 
\cite{cfrc:07,cue13,jim11,plrc08} 
and \cite{Yuk16}.
The proposals given in \cite{cfrc:07,plrc08} and \cite{cue13} aim to estimate the Minkowski content 
of $\partial S$. In \cite{cue13},
a very general convergence result is obtained, and in \cite{cfrc:07} a convergence rate of order $n^{-1/2d}$ is obtained under 
some mild hypotheses, and later on, in 
\cite{plrc08}, a convergence rate of order  $(\log(n)/n)^{1/(d+1)}$ is achieved under stronger assumptions.
In \cite{jim11}, a very nice fully data-driven method that is based on the Delaunay triangulation is proposed under an homogeneous point 
process sampling scheme.  The asymptotic rate of convergence of the variance is given but there is no global convergence rate because
no result is obtained for the bias. Finally, in \cite{Yuk16}, a parameter-free procedure that is based on the Voronoi triangulation is proposed
and a rate of convergence  of order $\lambda^{-1/d}$ is obtained under a Poisson Point Process (PPP) sampling scheme (where $\lambda$ 
is the intensity of the PPP).

\subsection{Roadmap}

When $S\subset \mathbb{R}^d$ is a compact set, we aim to estimate its surface area; that is, the $(d-1)$-Hausdorff measure of its boundary $\partial S$.

We propose two surface area estimators,  
at any finite dimension, when the available data is only a finite set $\X\subset S$.
In this setting,  the two-dimensional case has mostly been  studied. 
Assuming that $\X$ is an iid sample, the convex case was 
first addressed in \cite{bh:98} (using Crofton's formula). Later on, under the $\alpha$-convexity assumption, \cite{alphshapr} obtained the convergence of the
$\alpha$-shape's perimeter to the perimeter of the support and the associated convergence rates are derived. When the data  are given by a trajectory  from a reflected Brownian motion (RBM)
(with or without drift), a consistency result is obtained in Theorem 4 in \cite{ch:16}.

Proposed estimator relies on Crofton's formula, which was proven  in 1868 for  convex subsets of $\mathbb{R}^2$ and  extended to arbitrary dimensions
(see \cite{sa:04}).  It states that the surface area of $\partial S$  equals the integral of the number of intersections
with $\partial S$  of lines in $\mathbb{R}^d$ (see Equations \eqref{croft} and \eqref{gencroft} for explicit versions of Crofton's
formula for $d=2$ and $d\geq 2$, respectively).

The first proposed estimator is based on the Devroye--Wise support estimator 
\begin{equation} \label{dw}
	\hat{S}_{\eps_n}(\X)=\cup_{i=1}^n \mathcal{B}(X_i,\eps_n)
\end{equation}
see \cite{DW}, where $n$ is the cardinality of $\X$, $\eps_n\to 0$ as $n\to \infty$ and $\mathcal{B}(X_i,\eps_n)$ denotes the closed ball in $\mathbb{R}^d$ centred at $X_i$ and of radius
$\eps_n>0$.
By use of $\hat{S}_{\eps_n}(\X)$ and $\hat{S}_{4\eps_n}(\X)$ we propose an estimator of the number of intersection of a line with $\partial S$.
The reader should be aware that this estimator is not just a just a plug-in method because (in general) the number of intersections of a line with $\partial \hat{S}_{\eps_n}(\X)$
may not converge to the number of intersections of that line with $\partial S$. The main results regarding this estimator are stated in subsection 
\ref{mainDW} where it is proven that this estimator converges at a rate that is proportional to $d_H(\X,S)^{1/2}$ (where $d_H$ denotes the Hausdorff distance). This rate can be improved to 
$d_H(\X,S)$ when adding a reasonable assumption on the shape of $\partial S$. These rates are known when $\X$ is an iid sample, see Corollary \ref{mainth1bis}. 
The computational aspects of this estimator are studied in subsection \ref{alg}.

The second uses the $\alpha$-convex hull support estimator
\begin{equation} \label{alphahull}
	C_{\alpha}(\X)= \bigcap_{\{x:d(x,\X)\geq\alpha\}} \mathring{\mathcal{B}}(x,\alpha)^c,
\end{equation}
see \cite{rhull}, where $\mathring{\mathcal{B}}(x,\alpha)^c$ denotes the complement of the open ball in $\mathbb{R}^d$ centred at $x$ and of radius $\alpha>0$.  
First we extend the results in \cite{cue12}. 
More precisely, we prove that, in any dimension,  the   surface  area of the hull's boundary---that is, $|\partial C_\alpha(\X)|_{d-1}$---converges to $|\partial S|_{d-1}$.
This result is interesting in itself but in practice it is difficult to compute  $|\partial C_\alpha(\X)|_{d-1}$, especially for dimension $d> 2$.
However, we will see that by means of Crofton's formula it can easily be estimated via the Monte Carlo method. The approach based on the $\alpha$-convex hull
is introduced in Section \ref{alphsec}. A discussion of the rates of convergence is given in Section \ref{rates} and an algorithm based on the Monte Carlo method for the estimator based on the $\alpha$-hull is introduced in Section \ref{algo}.

These results can be applied to many deterministic or random situations to obtain explicit convergence rates.
We focus on two random situations: the case $\X=\{X_1,\ldots,X_n\}$ of iid drawn on $S$ (with a density bounded from below by a positive constant),  
and the case of  random trajectories of reflected diffusions  on $S$.
In particular, we provide  convergence rates when the trajectory is the result of a RBM 
(see  \cite{ch:16,ch:20}).
This last setting has several  applications in  ecology, where the trajectory is obtained by recording the location of an animal (or several animals) living in an area $S$, which is called its home range (the territorial range of the animal),  and $X_t$ represent the position at time $t$ transmitted by the instrument
(see, for instance,  \cite{amparo,ch:16,ch:20},  and the references therein). 

The rate of convergence of the surface area estimator based on $\hat{S}_{\eps_n}(\X)$, when $\X$ is an iid sample, is of order $n^{-1/2d}$. This can be improved to  $n^{-1/d}$, depending on the assumptions  on the smoothness  of $\partial S$. With the estimation of the support  that uses the $\alpha$-convex hull, when $\X$ is an iid sample, we obtain a rate of order $n^{-2/(d+1)}$.

\section{Background} 
\label{back}
\subsection{Notations}

Given a set $S\subset \mathbb{R}^d$, we  denote by
$\mathring{S}$, $\overline{S}$ and $\partial S$ the interior, closure and boundary of $S$,
respectively, with respect to the usual topology of $\mathbb{R}^d$.
We also write $\text{diam}(S)=\sup_{(x,y)\in S\times S} ||x-y||$.
The parallel set of $S$ of radius $\eps$ is 
$B(S,\eps)=\{y\in{\mathbb R}^d:\ \inf_{x\in S}\Vert y-x\Vert\leq \eps \}$.

If $A\subset\mathbb{R}^d$ is a Borel set, then  $|A|_d$ denotes 
its $d$-dimensional Lebesgue measure.
When $A\subset \mathbb{R}^d$ is a $(d-1)$-dimensional manifold, then $|A|_{d-1}$ denotes its 
$(d-1)$-Hausdorff measure.

We  denote by $\mathcal{B}_d(x,\varepsilon)$ (or sometimes just $\mathcal{B}(x,\varepsilon)$) the closed ball
in $\mathbb{R}^d$,
of radius $\varepsilon$, centred at $x$, and  $\omega_d=|\mathcal{B}_d(x,1)|_d$.
Given two compact non-empty sets $A, C \subset{\mathbb R}^d$, 
the Hausdorff distance between $A$ and $C$ is defined by
\begin{equation*}
	d_H(A,C)=\inf\{\eps>0: \mbox{such that } A\subset B(C,\eps)\, \mbox{ and }
	C\subset B(A,\eps)\}.\label{Hausdorff}
\end{equation*}

The  $(d-1)$-dimensional sphere in $\mathbb{R}^d$ is denoted by $\mathcal{S}^{d-1}$, while the half-sphere in $\mathbb{R}^d$ is denoted by $(\mathcal{S}^+)^{d-1}$; that is, 
$(\mathcal{S}^+)^{d-1}=(\mathbb{R}^{d-1}\times \mathbb{R}^+)\cap \mathcal{S}^{d-1}$.
Given $M$ a sufficiently smooth $(d-1)$-manifold and $x\in M$, the affine tangent
space of $M$ at $x$ is denoted by $T_x M$. When $S\subset \mathbb{R}^d$ is regular (i.e., compact and satisfying $S=\overline{\mathring{S}}$) and  has
a $\mathcal{C}^1$ regular boundary $\partial S$, then for any $x\in \partial S$ we can define $\eta_x$ 
the outward normal unit vector at $x$; that is, the unit vector of $(T_x \partial S)^{\perp}$
such that, for $t>0$ small enough, $x+t\eta_x\in S^c$.

Given a vector $\theta\in (\mathcal{S}^+)^{d-1} $ and a point $y$, $r_{\theta,y}$ denotes the line $\{y+\lambda\theta,\lambda\in \mathbb{R}\}=y+\mathbb{R}\theta$.
If $y_1$ and $y_2$ are two points  in $r_{\theta,y}$,    then $y_i=y+\lambda_i\theta$. With a slight abuse of notation, we write $y_1<y_2$ when $\lambda_1<\lambda_2$.

\subsection{Crofton's formula}\label{crof}

In 1868, Crofton proved the following result (see \cite{cro68}): given a convex  set in the plane, whose boundary is denoted by $\gamma$, then its length  
$ |\gamma|_1$ can be computed by 
\begin{equation}\label{croft}
	|\gamma|_1=\frac{1}{2} \int_{\theta=0}^\pi  \int_{p=-\infty}^{+\infty} n_{\gamma}(\theta,p)dpd\theta,
\end{equation}

\noindent $n_{\gamma}(\theta,p)$ being the number of intersections of $\gamma$ with the line 
$r_{\theta^*,\theta p}$,   where $\theta^*\in (\mathcal{S}^+)^{1}$ is orthogonal to $\theta$, and $dpd\theta$ is the two-dimensional Lebesgue measure (see Figure \ref{crofig}).
This result has been generalized to   compact (not necessarily convex) sets in   $\mathbb{R}^d$ for any $d>2$, and also to Lie groups (see \cite{sa:04}).

\begin{figure}[ht]%
	\centering
	\includegraphics[scale=.7]{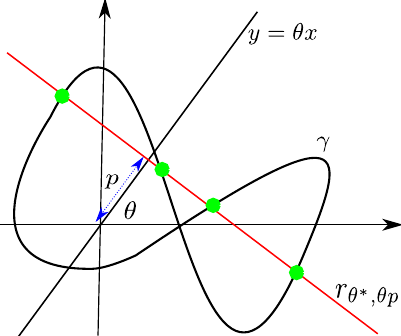} \hspace{3cm} 
	\caption{The function $n_\gamma$ counts the number of intersections of $\gamma$ with the line $r_{\theta^*,\theta p}$ determined by $\theta$ and $p$.}
	\label{crofig}
\end{figure} 

To introduce the general Crofton formula in $\mathbb{R}^d$ for a compact $(d-1)$-dimensional manifold $M$, let us define first the constant 
$$\beta(d)=\Gamma(d/2)\Gamma((d+1)/2)^{-1}\pi^{-1/2},$$ 
where $\Gamma$ stands for the well-known Gamma function.
Let $\theta\in (\mathcal{S}^+)^{d-1}$. Then, $\theta$ determines a $(d-1)$-dimensional linear space $\theta^{\perp}=\{v:\langle v, \theta\rangle=0\}$.
Given $y\in \theta^{\perp}$, let us write $n_{M}(\theta,y)=\#(r_{\theta,y}\cap M)$, where $\#$ is the cardinality of the set (see Figure \ref{cof2}).

\begin{figure}[ht]%
	\centering
	\includegraphics[scale=.5]{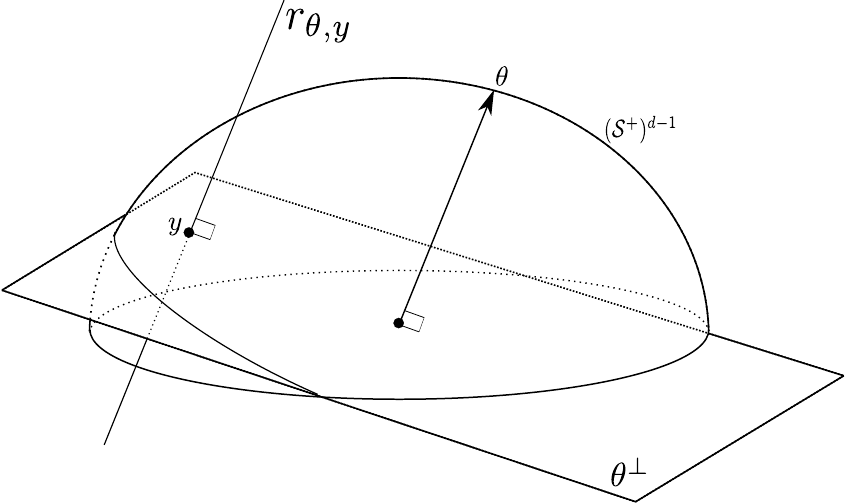} \hspace{3cm} \ 
	\caption{The line $r_{\theta,y}=y+ \mathbb{R}\theta$ is shown, where $y\in \theta^\perp$ and $\theta\in(\mathcal{S}^+)^{d-1}$.}
	\label{cof2}
\end{figure}

It is proven in \cite{fed:69} (see Theorem 3.2.26) that if $M$ is a $(d-1)$-dimensional rectifiable set,
then the integralgeometric measure of $M$ (which will be denoted by $I_{d-1}(M)$,
and is defined by the right-hand side of \eqref{gencroft}) equals its $(d-1)$-dimensional Hausdorff measure; that is,

\begin{equation}\label{gencroft}
	|M|_{d-1}=I_{d-1}(M)=\frac{1}{\beta(d)} \int_{\theta \in {(\mathcal{S}^+)}^{d-1}}\int_{y\in \theta^\perp}  n_{M}(\theta,y)d\mu_{d-1}(y)d\theta.
\end{equation}

The measure $d\theta $ is the uniform measure on $(\mathcal{S}^+)^{d-1}$ (with total mass 1) and $\mu_{d-1}$ is the $(d-1)$-dimensional Lebesgue measure.

\subsection{Restrictions on the shape}

We will now recall some well-known restrictions that are put on the shape in the set estimation.

\begin{definition} \label{def-rconvexity} For $\alpha>0$, a set $S\subset \mathbb{R}^d$ is said to be $\alpha$-convex if 
	$S=C_\alpha(S),$ where $C_\alpha(S)$ is the $\alpha$-convex hull of $S$, defined in \eqref{alphahull}, replacing $\X$ by $S$.
\end{definition}

When $S$ is $\alpha$-convex, a natural estimator of $S$ from a random sample $\X$ of points 
(drawn from a distribution with support $S$), is $C_\alpha(\X)$ (see \cite{rhull}).

\begin{definition} \label{rolling} 
	A set $S\subset \mathbb{R}^d$ is said  to satisfy the outside $\alpha$-rolling condition if for each boundary point $s\in \partial S$ 
	there exists an $x\in S^c$ such that $\mathcal{B}(x,\alpha)\cap \partial S=\{s\}$.
	A compact set $S$ is said to satisfy the inside $\alpha$-rolling condition if $\overline{S^c}$ satisfies the outside 
	$\alpha$-rolling condition.
\end{definition}

Following the notation in \cite{fed:56}, let $\text{Unp}(S)$ be the set of points $x\in \mathbb{R}^d$ with a unique projection on $S$.
\begin{definition} \label{reach} For $x\in S$, let {\it{reach}}$(S,x)=\sup\{r>0:\mathring{\mathcal{B}}(x,r)\subset \text{Unp}(S)\big\}$. The reach of $S$ is defined by $reach(S)=\inf\big\{reach(S,x):x\in S\big\},$ while $S$ is of positive reach if $reach(S)>0$.
\end{definition}

\begin{remark} \label{rem1} Throughout this paper, we  assume that $\partial S$ is the boundary of a compact set
	$S\subset\mathbb{R}^d$ such that $S=\overline{\mathring{S}}$.
	We also assume that $S$ fulfills the outside and inside $\alpha$-rolling conditions, and
	then $\partial S$ is rectifiable (see Theorem 1 in \cite{wal99}).
	From this it follows that   $I_{d-1}(\partial S)=|\partial S|_{d-1}<\infty$, which implies (by \eqref{gencroft}) that, except for a set of measure zero with respect to $d\mu_{d-1}(y) d\theta $, any line $r_{\theta,y}$ meets $\partial S$ a finite 	number of times:  $ n_{\partial S}(\theta,y)<\infty$.
	From Theorem 1 in \cite{wal99}, it also follows  that $\partial S$ is a $\mathcal{C}^1$ manifold, which allows us for each $x\in \partial S$ to define its unit outward normal vector $\eta_x$.
\end{remark}

For the estimator of the surface area based on the Devroye--Wise estimator, we will assume that $\partial S$ satisfies a technical hypothesis, which is referred to as $(C,\eps_0)$-regularity.

\begin{definition}  
	
	Let $E_{\theta}(\partial S)=\{x \in \partial S, \langle \eta_x, \theta \rangle=0\}$. The image of $E_{\theta}(\partial S)$ 
	by the orthogonal projection onto $\theta^\perp$ is denoted by $F_{\theta}=\pi_{\theta^{\perp}}(E_{\theta}(\partial S))$   (which for non-degenerate cases is a $(d-2)$-dimensional submanifold of 
	$\theta^\perp$). We also denote by $B(F_{\theta},\eps)$  its parallel set of radius $\eps$.
	
	We define, for $\eps>0$, 
	\begin{equation*}
		\varphi_{\theta}(\eps)=\big|\theta^\perp\cap B(F_{\theta},\eps)\big|_{d-1}.
	\end{equation*}
	
	\begin{itemize}
		\item We will say that $\partial S$ is $(C,\eps_0)$-regular if for all $\theta$ and all $\eps\in (0,\eps_0)$, 
		$\varphi'_{\theta}(\eps)$ exists and $\varphi'_{\theta}(\eps)\leq C$.
		\item If $\partial S$ is $(C,\eps_0)$-regular for some $\eps_0>0$, then we will say that  $\partial S$ is $C$-regular.
	\end{itemize}
	
\end{definition}

Once the rolling balls condition is imposed, we will show through some examples in Figure  \ref{fig1a} that the $(C,\eps_0)$-regularity of the boundary is quite mild.

\begin{figure}[h]
	\begin{minipage}[b]{0.5\linewidth}
		\centering \includegraphics[scale=0.3]{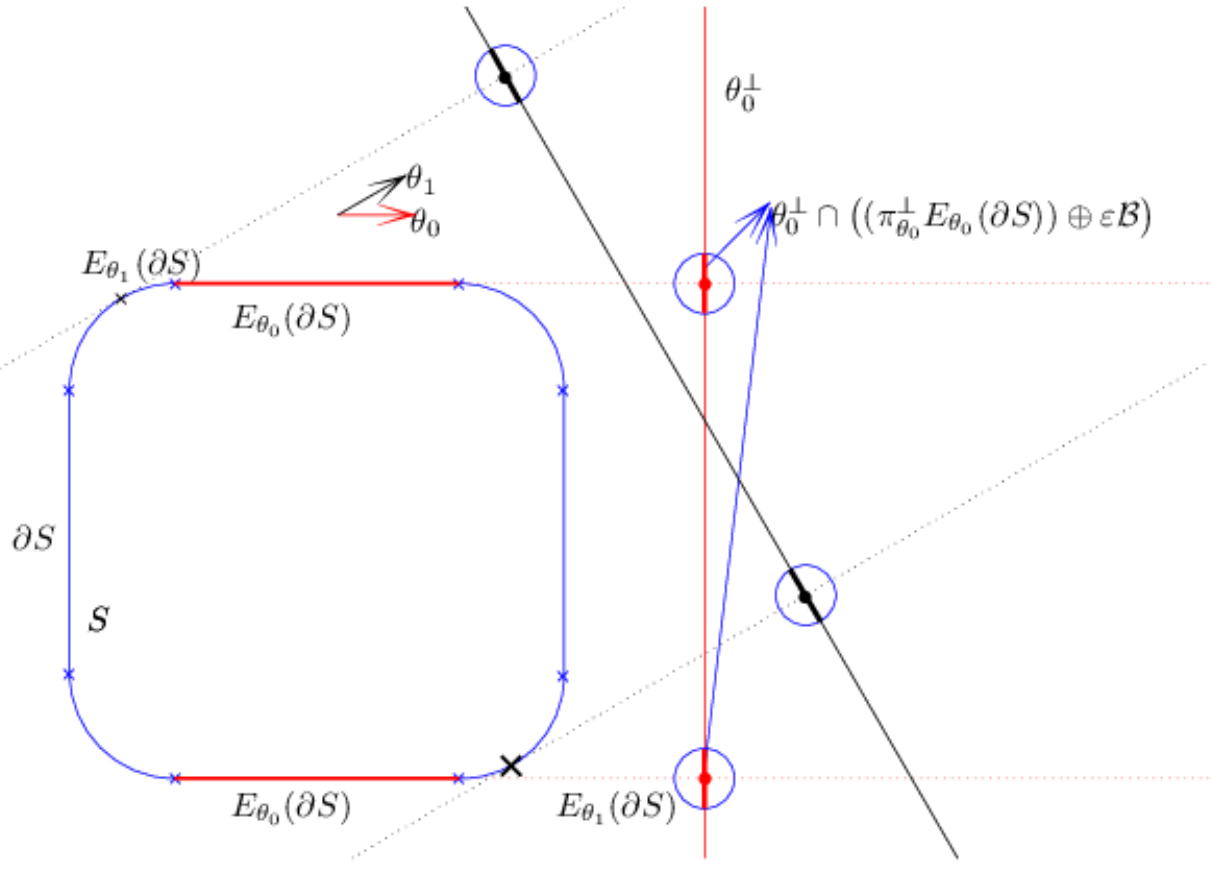}
		\caption{$(a)$ smooth square}
		\label{fig1a}
	\end{minipage}\hfill
	\begin{minipage}[b]{0.5\linewidth}
		\begin{center}
			\includegraphics[scale=0.3]{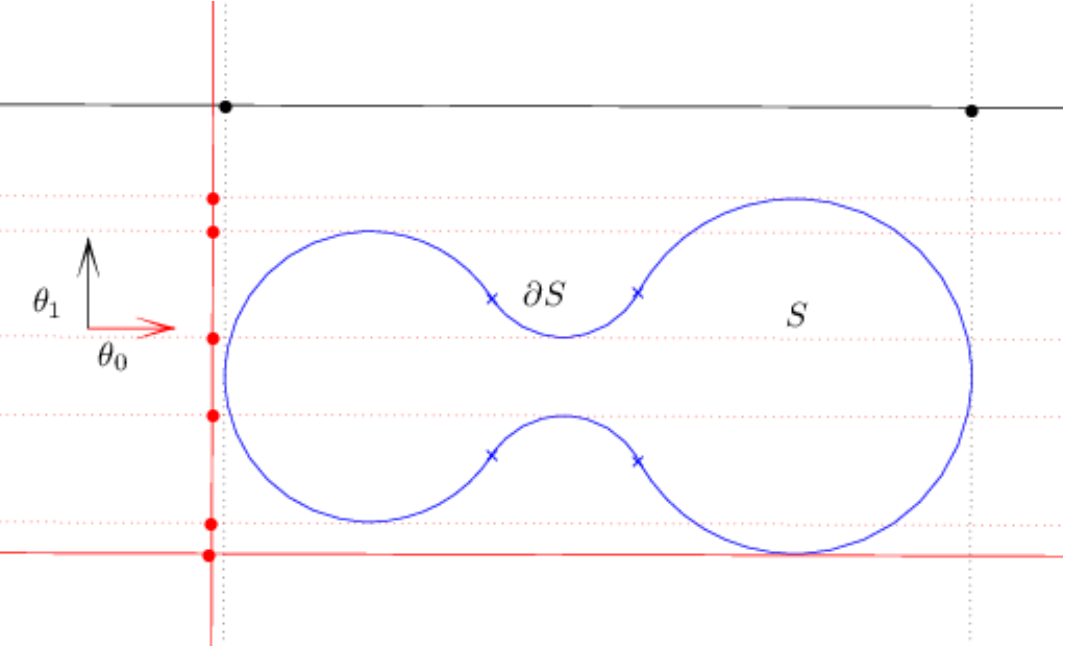}
		\end{center}
		\caption{$(b)$ two-dimensional peanut}
		\label{fig1b}
	\end{minipage}\hfill
	\begin{minipage}[b]{0.5\linewidth}
		\includegraphics[scale=0.25]{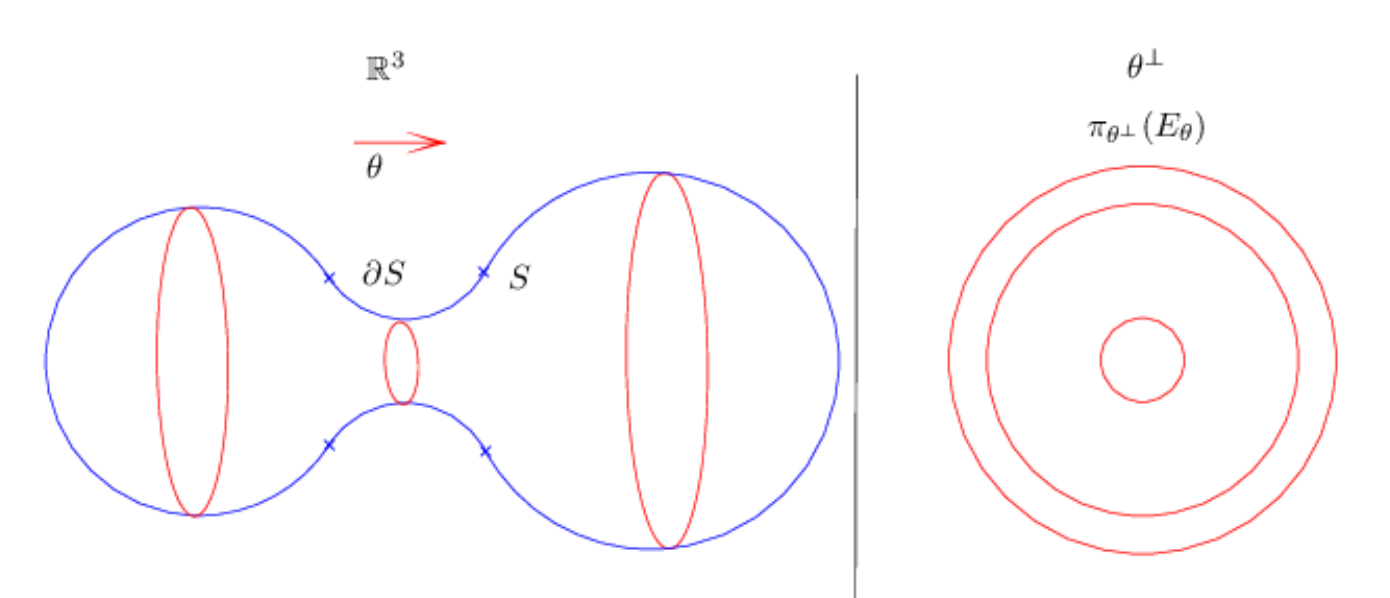}
		\caption{$(c)$ three-dimensional peanut}
		\label{fig1c}
	\end{minipage}\hfill
	\begin{minipage}[b]{0.5\linewidth}
		\begin{center}
			\includegraphics[scale=0.25]{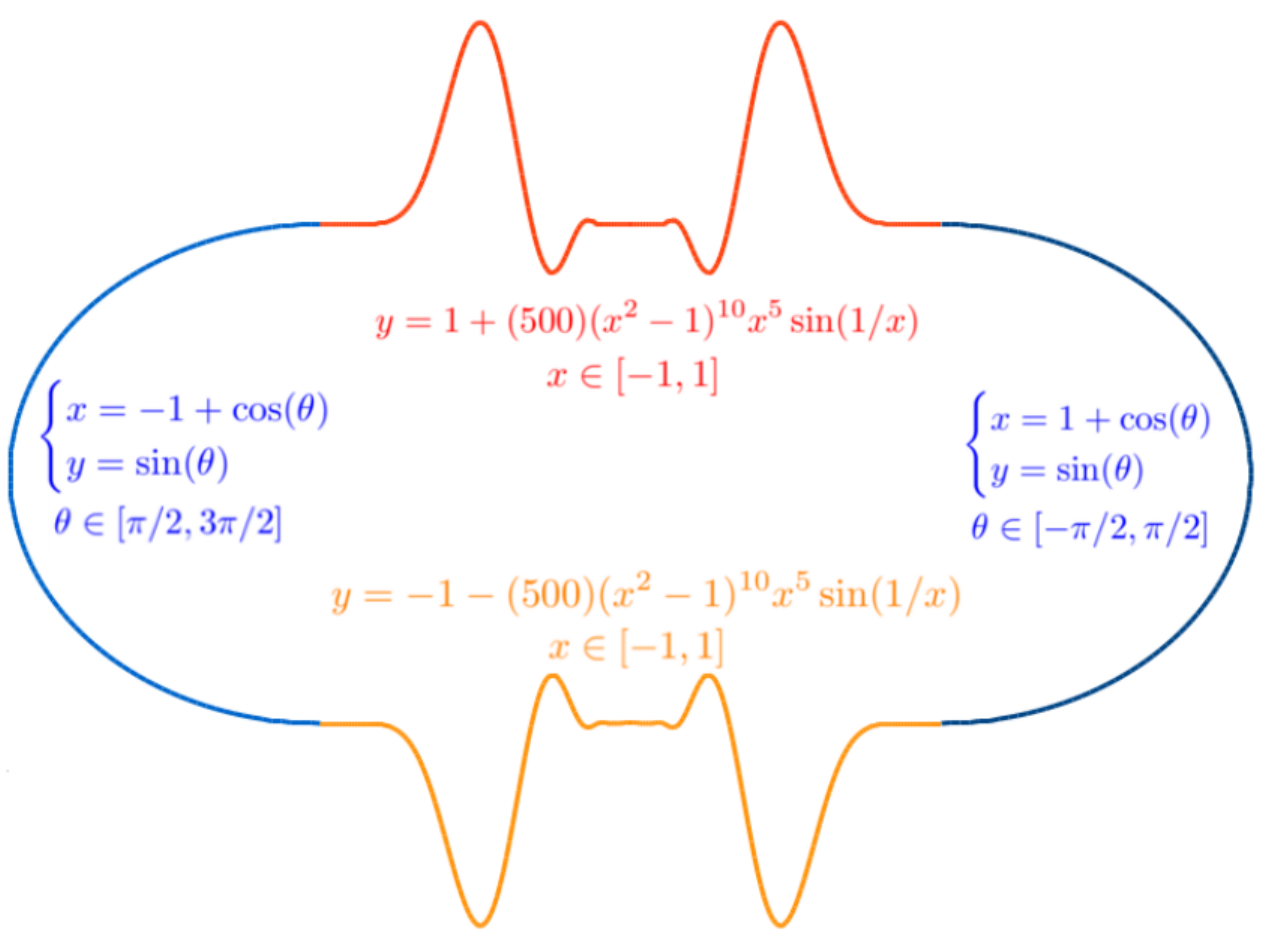}
			\caption{$(d)$ an `infinite wave' shape}  
			\label{fig1d}
		\end{center}
	\end{minipage}\hfill
\end{figure}
\begin{enumerate}
	\item[(a)] The first set, which is presented in Figure \ref{fig1a},   is a unit square with `round angles'. For all $\theta$, 
	$F_{\theta}=\pi_{\theta^{\perp}}(E_{\theta}(\partial S))=\{x_1(\theta),x_2(\theta)\}$ with 
	$||x_1(\theta)-x_2(\theta)||\geq 1$.  Thus, for $\eps<1/2$, and for all 
	$\theta$,  $\varphi_{\theta}(\eps)= 4 \eps$ and is thus $\partial S$ is $(4,0.5)$-regular (in particular $4$-regular).  
	
	\item[(b)] The second set, which is presented in Figure \ref{fig1b}, is a two-dimensional `peanut' that is made of $4$ circular arcs. For all $\theta$ and $\eps$ small enough, we have 
	$\varphi_{\theta}(\eps)= 2 c_{\theta}  \eps$ where $c_{\theta}$ is the number of connected components of $F_{\theta}$, which is less than $6$, from which it   follows   that $S$
	has a $12$-regular boundary.
	\item[(c)] The third set, presented in Figure \ref{fig1c}, is the surface of revolution generated by (b).
	Here we have that for all $\theta$, $E_{\theta}$ is a one-dimensional manifold with less than three connected components.
	The maximal length of a component is bounded by $L$, the length of the maximal perimeter (shown in blue in the figure).
	The reach of each  $E_{\theta}$ is (uniformly in $\theta$) lower bounded by $\alpha>0$.
	All of these assertions allow us to claim that $\partial S$ is $6L$-regular.

	\item[(d)] The rolling ball condition is not sufficient to guarantee the $(C,\eps_0)$-regularity of the boundary:
	this fails if, for instance, $S$ is such that $\partial S=S_1\cup S_2\cup S_3\cup S_4$ (see Figure \ref{fig1d}) with:
	$$S_1=\{(x,1+(500)(x^2-1)^{10}x^5\sin(1/x)), x\in[-1,1]\setminus \{0\}\}\cup\{(0,1)\};$$
	$$S_2=\{(x,-1-(500)(x^2-1)^{10}x^5\sin(1/x)), x\in[-1,1]\setminus \{0\}\}\cup\{(0,-1)\};$$    
	$$S_3=\{(1+\cos(\theta),\sin(\theta)),\theta\in[-\pi/2,\pi/2]\};$$
	$$S_4=\{(-1+\cos(\theta),\sin(\theta)),\theta\in[\pi/2,3\pi/2]\}.$$
	It can easily be proven that such a set satisfies the rolling ball condition for any $r_0\leq 1/80$  
	but $\varphi_0'(\eps)\rightarrow +\infty$ when $\eps\rightarrow 0$, which implies that $\partial S$ is not $C$-regular.
	
\end{enumerate}

For  the Devroye--Wise type estimator, we will also show that the convergence rate can be quadratically improved
if we additionally assume that the number of 
intersections between any line and $\partial S$ is bounded from above (this excludes the case of a linear part in $\partial S$,   such as in Figure \ref{fig1a}).

\begin{definition}  \label{boundinter}
	Given $S\subset \mathbb{R}^d$, we say that $\partial S$ has a bounded number of linear intersections if there exists an $N_S$ such that
	for all $\theta\in (\mathcal{S}^+)^{d-1}$ and $y\in \theta^\perp$, $n_{\partial S}(\theta,y)\leq N_S$.
\end{definition}

\begin{remark}
	The previous definition can be replaced with a weaker requirement by asking that $\partial S$ has a bounded number of linear intersections for almost all lines with respect to $\mu_{d-1}(y)d\theta$, and the corresponding results remain true.
\end{remark}

\section{Surface area estimation based on the Devroye--Wise estimator}\label{dwsec}

\subsection{A conjecture on the Devroye--Wise estimator}\label{dwconj}

Since the set $S$ is in general unknown, we first propose the natural plug-in idea of computing 
$|\partial \hat{S}|_{d-1}$, where $\hat{S}$ is an estimator of $S$.
There are several kinds of set estimators, depending on the geometric restrictions imposed on $S$ and the structure of the data 
(see \cite{ch:16,DW} and references therein).
One of  the most studied in the literature, which is also universally consistent, is the Devroye--Wise  estimator (see \cite{DW}) that was introduced in \eqref{dw}. 
This all-purpose estimator has the advantage that it is quite easy   to compute the intersection of a line with its boundary, as follows: 
Given a line $r_{\theta,y}$, we can compute $\mathbb{Y}_i=\partial \B(X_i,\eps_n)\cap r_{\theta,y}$, and then
$\mathbb{Z}_i=\{y\in \mathbb{Y}_i,d(y,\X)\geq \eps_n\}$, so we have that, with probability one, 
$$\cup_i \mathbb{Z}_i=r_{\theta,y}\cap \partial \hat{S}_{\eps_n}(\X).$$
Indeed, suppose, on the contrary, that there exists a $z\in \cup_i \mathbb{Z}_i$ and $z\in \mathring{\hat{S}}_{\eps_n}$, then we have 
$d(z,\X)=\eps_n$ and $z\in \mathcal{H}\{X_i,d(X_i,z)=\eps_n\}$ (where $\mathcal{H}(E)$ is the convex hull of $E$). Thus, there are at least $d+1$ observations on the same hypersphere of given radius $\eps_n$, but this event has probability $0$ (see \cite{beatesis}).

We conjecture that the plug-in estimator $|\partial \hat{S}_{\eps_n}(\X)|_{d-1}$ satisfies the following:
\begin{enumerate}
	\item If $\eps_n<d_H(\X,S)$, then $\partial \hat{S}_{\eps_n}(\X)$ does not converge to $\partial S$ and 
	$|\partial \hat{S}_{\eps_n}(\X)|_{d-1}$ does not converge to $|\partial S|_{d-1}$. 
	\item If $\eps_n= d_H(\X,S)$, then $\partial \hat{S}_{\eps_n}(\X)$ converges to $\partial S$ with the best possible rate but 
	$|\partial \hat{S}_{\eps_n}(\X)|_{d-1}$ does not converge to $|\partial S|_{d-1}$ but greatly overestimates it.
	\item If $\eps_n\gg d_H(\X,S)$ and $\eps_n\rightarrow 0$, then $\partial \hat{S}_{\eps_n}(\X)$ converges to $\partial S$ and
	$|\partial \hat{S}_{\eps_n}(\X)|_{d-1}$ converges to $|\partial S|_{d-1}$
	but we can expect that the rate is greater than $\eps_n$ (namely $||\partial \hat{S}_{\eps_n}(\X)|_{d-1}-|\partial S|_{d-1}|\geq \mathcal{O}(\eps_n)\gg d_H(\X,S)$). Indeed,  if $S$ fulfills the 
	outside and inside rolling ball conditions, then, for $n$ large enough, we have that $B(S,\eps_n-d_H(\X,S))\subset \hat{S}_{\eps_n}(\X) \subset B(S,\eps_n)$, which in turn gives that $||\partial \hat{S}_{\eps_n}(\X)|_{d-1}-|\partial S|_{d-1}|\geq \mathcal{O}(\eps_n)\gg d_H(\X,S)$. 
	
\end{enumerate}

\subsection{A surface estimator based on the  Devroye--Wise estimator} \label{dwesti}

The aim of this section is to propose an estimator for the surface area based on the Devroye--Wise support estimator and Crofton's formula 
that can attain a convergence rate of order $d_H(\mathbb{X},S)$. 
The whole procedure is defined for any set $\mathbb{X}$, but is not necessarily finite because we will apply our estimator to the case in which $\mathbb{X}$ is the trajectory of a Brownian motion.
If $\mathbb{X}$ is not finite, then for a given $\varepsilon>0$, we write $\hat{S}_{\eps}(\mathbb{X})=B(\mathbb{X},\varepsilon)$. 
This procedure replaces $n_{\partial S}(\theta,y)$ by $\hat{n}_{\eps,\mathbb{X}}(\theta,y)$ introduced in Definition \ref{hatn}, 
and then integrates $\hat{n}_{\epsilon,\mathbb{X}}(\theta,y)$  as in Crofton's formula (see \eqref{defn1}).   We will prove that (see Remark \ref{rem0}) by the $(C,\eps_0)$-regularity of the boundary,
with probability one,  $r_{\theta,y}$ is not included in any $(d-1)$-dimensional affine tangent space (tangent to $\partial S$).
Then, $n_{\partial S}(\theta,y)=2k_S(\theta,y)$, where $k_S(\theta,y)$ is the number of connected components of $r_{\theta,y}\cap S$. 

\begin{definition}\label{hatn}
	Let $\eps$ be a positive real number  and $\mathbb{X}\subset S$ be a set (not necessarily finite).	Consider a line $r_{\theta,y}$. If $ \hat{S}_{\eps}(\mathbb{X}) \cap r_{\theta,y}=\emptyset$, then define $\hat{n}_{\eps,\mathbb{X}}(\theta,y)=0$. If not, then:
	
	\begin{itemize}
		\item denote by $I_1,\dots, I_m$ the connected components of $ \hat{S}_{\eps}(\mathbb{X})\cap r_{\theta,y}$.
		Order this sequence in such a way that $I_i=(a_i,b_i)$, with $a_1<b_1< \dots <a_m<b_m$.
		\item If for some consecutive intervals $I_{i},I_{i+1},\dots,I_{i+\ell}$, for all $a_i<\lambda<b_{i+\ell}$ and $t=y+\lambda\theta \in r_{\theta,y}$,      $d(t,\mathbb{X})\leq 4\eps$, define  $A_i=(a_i,b_{i+\ell})$.
		\item     Let $j$ be the number of disjoint open intervals $A_1,\dots,A_j$ that this process ended with.
		Then define $\hat{n}_{\eps,\mathbb{X}}(\theta,y)=2j$.
	\end{itemize}
\end{definition}

To roughly summarize this, we consider the connected components of $\hat{S}_{\eps}\cap r_{\theta,y}$ and then `link or glue' the ones that are in the same connected component of  $\hat{S}_{4\eps}\cap r_{\theta,y}$. In the sequel, we will refer to this process as the  gluing procedure.

To gain some insight into the relationship between $\hat{n}_{\eps,\mathbb{X}}(\theta,y)$ and $n_{\partial \hat{S}_{\eps}(\mathbb{X})}(\theta,y)$, observe that
$\hat{n}_{\eps,\mathbb{X}}(\theta,y)\leq n_{\partial \hat{S}_{\eps}(\mathbb{X})}(\theta,y)$.   
We also have that $\hat{n}_{\eps,\mathbb{X}}(\theta,y) \leq n_{\partial \hat{S}_{4\eps}(\mathbb{X})}(\theta,y)$.   Indeed, let 
$C_1,\ldots,C_K$ be the connected components of $r_{\theta,y}\cap \hat{S}_{4\eps}$ and note that:  
\begin{enumerate}
	\item For each $j$ there exists an index $i$ such that $I_j\subset C_i$.
	\item If $d(C_i,\mathbb{X})> \eps$ for all $j$, then we have that $I_j\cap C_i=\emptyset$. 
	\item If $d(C_i,\mathbb{X})\leq \eps$, then there exists an $I_j\subset C_i$ and all the $I_j$ such that $I_j\subset C_i$ are glued by the proposed procedure. Thus, there exists a unique $j'$ such that $A_{j'}\subset C_j$.
\end{enumerate}

Our first proposed estimator is
\begin{equation}\label{defn1}
	\hat{I}_{d-1}(\mathbb{X},\eps)=\frac{1}{\beta(d)} \int_{\theta \in {(\mathcal{S}^+)}^{d-1}}\int_{y\in \theta^\perp}  \hat{n}_{\eps,\mathbb{X}}(\theta,y)d\mu_{d-1}(y)d\theta.
\end{equation}

Under the assumption that $\partial S$ has a bounded number $N_S$ of linear intersections (see Definition \ref{boundinter}), we will consider, for a given $N_0\geq N_S$,
\begin{equation*}\label{defn1bis}
	\hat{I}^{N_0}_{d-1}(\mathbb{X},\eps)=\frac{1}{\beta(d)} \int_{\theta \in {(\mathcal{S}^+)}^{d-1}}\int_{y\in \theta^\perp}  \min (\hat{n}_{\eps,\mathbb{X}}(\theta,y),N_0)d\mu_{d-1}(y)d\theta.
\end{equation*}

\subsection{Main results on the Devroye--Wise based estimator.}\label{mainDW}

\begin{theorem} \label{mainth} Let  $S\subset \mathbb{R}^d$ be a compact set fulfilling the outside and inside $\alpha$-rolling conditions.
	Assume also that $S$ is $(C,\eps_0)$-regular for some positive constants $C$ and $\eps_0$.
	Let $\X=\{X_1,\dots,X_n\} \subset S$.
	Let $\eps_n \to 0$ be such that $d_H(\X, S)\leq \eps_n$. Then,  
	\begin{equation}\label{maintheq}
		\hat{I}_{d-1}(\mathbb{X}_n,\eps_n)=|\partial S|_{d-1}+\mathcal{O}(\sqrt{\eps_n}).
	\end{equation}
	Moreover, for $n$ large enough, 
	
	\begin{equation*}
		|\mathcal{O}(\sqrt{\eps_n})|\leq \frac{4C\text{diam}(S)}{3\beta(d)\sqrt{\alpha}}\sqrt{\eps_n}.
	\end{equation*}
\end{theorem}


The idea of the proof of Theorem \ref{mainth} consists of proving that our algorithm allows a perfect estimation
of $n_{\partial S}(\theta,y)$ for the lines that are `far enough'
(fulfilling $L(\eps)$ for some $\eps>0$) from the tangent spaces. For the rest of the lines, we will prove in Corollary \ref{coraux2} that, 
under $(C,\epsilon_0)$-regularity, the integral of $\hat{n}_{\eps_n,\X}(\theta,y)$ on the set of these lines, 
is bounded from above by $C'\eps_n^{1/2}$,  $C'$ being a positive constant. Roughly speaking, a line fulfilling condition 
$L(\eps)$ does not meet the estimator $\partial \hat{S}_{\epsilon_n}$ too many times.

From Theorem \ref{mainth} and Theorem 4 in \cite{crc:04}, we can obtain the rate of convergence for the iid case:

\begin{corollary} \label{mainth1} Let  $S\subset\mathbb{R}^d$ be a compact set fulfilling the inside and outside $\alpha$-rolling conditions.
	Assume also that $S$ is $(C,\eps_0)$-regular for some positive constants $C$ and $\eps_0$.
	Let $\X=\{X_1,\dots,X_n\}$ be the set of observations of an iid sample of $X$ with distribution $P_X$ supported on $S$.
	Assume that $P_X$ has density $f$ (w.r.t. $\mu_d$)
	bounded from below by some $c>0$.
	Let $\eps_n=C'(\ln(n)/n)^{1/d}$ and $C'>(6/(c\omega_d))^{1/d}$.
	Then, with probability one, for $n$ large enough,
	\begin{equation*}
		\hat{I}_{d-1}(\mathbb{X}_n,\eps_n)=|\partial S|_{d-1}+\mathcal{O}\left(\left(\frac{\ln n}{n} \right)^{\frac{1}{2d}} \right).
	\end{equation*}
	
\end{corollary}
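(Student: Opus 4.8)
The plan is to combine Theorem~\ref{mainth} with the Devroye--Wise Hausdorff-distance estimates recalled in Remark~\ref{iid}, so the corollary is essentially a specialization of the theorem to the iid situation. The only thing that really needs to be checked is that the chosen sequence $\eps_n=C'(\ln(n)/n)^{1/d}$, with $C'$ as in the statement, satisfies with probability one, for $n$ large enough, the hypothesis $d_H(\mathcal{X}_n,S)\leq \eps_n$ of Theorem~\ref{mainth}. Once that is in place, the theorem immediately gives $\hat{I}_{d-1}(\partial S)=|\partial S|_{d-1}+\mathcal{O}(\sqrt{\eps_n})$, and substituting $\sqrt{\eps_n}=\sqrt{C'}\,(\ln(n)/n)^{1/(2d)}$ yields the announced rate.

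First I would invoke a covering argument for $d_H(\mathcal{X}_n,S)$ in the iid case: since $f$ is bounded below by $c>0$ on $S$, for any $x\in S$ the probability that a single sample point lands in $\mathcal{B}(x,\eps_n)\cap S$ is at least $c\cdot|\mathcal{B}(x,\eps_n)\cap S|_d$, which, using the inside $\alpha$-rolling condition (so that $S$ contains a ball of radius $\sim\eps_n$ near each of its points for $\eps_n$ small), is of order $c\,\omega_d (\eps_n/2)^d$ up to constants. A standard net argument over an $\eps_n$-net of $S$ of polynomial cardinality, together with a Borel--Cantelli estimate, then shows that $S\subset \hat{S}_n(\eps_n)=\bigcup_i \mathcal{B}(X_i,\eps_n)$ eventually almost surely precisely when $c\,\omega_d (\eps_n)^d \gtrsim \ln n/n$, i.e. when $C'^d > 6/(c\,\omega_d)$, which is exactly the stated condition on $C'$. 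Since $\mathcal{X}_n\subset S$ trivially, this gives $d_H(\mathcal{X}_n,S)\leq\eps_n$ a.s.\ for $n$ large. In practice I would simply cite Theorem~4 of \cite{crc:04}, which is stated in Remark~\ref{iid} and provides exactly this conclusion under the same constant, so this step reduces to a one-line reference.

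Then I would verify that the remaining hypotheses of Theorem~\ref{mainth} are inherited verbatim: $S$ is assumed compact and to satisfy the inside and outside $\alpha$-rolling conditions, and to be $(C,\eps_0)$-regular, all of which are hypotheses of the corollary itself; and $\eps_n\to 0$ since $\ln(n)/n\to 0$. Applying Theorem~\ref{mainth} on the almost sure event where $d_H(\mathcal{X}_n,S)\leq\eps_n$ gives
\[
\hat{I}_{d-1}(\partial S)=|\partial S|_{d-1}+\mathcal{O}(\sqrt{\eps_n})
=|\partial S|_{d-1}+\mathcal{O}\!\Big(\Big(\tfrac{\ln n}{n}\Big)^{1/(2d)}\Big),
\]
which is the claim; one can even keep track of the explicit constant $\tfrac{5C\,\mathrm{diam}(S)}{6\sqrt{\alpha}}\sqrt{C'}$ from the quantitative part of Theorem~\ref{mainth}.

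The only genuine obstacle is the probabilistic covering step, i.e.\ confirming that the constant $C'>(6/(c\omega_d))^{1/d}$ is exactly what is needed for $S\subset\hat{S}_n(\eps_n)$ eventually a.s. Everything else is bookkeeping. Since this estimate is already available in \cite{crc:04} (and explicitly recalled in Remark~\ref{iid} with the matching constant), the corollary is really an immediate corollary, and the proof in the Appendix should be short: cite the Hausdorff-rate result to secure the hypothesis of Theorem~\ref{mainth}, then apply the theorem and substitute.
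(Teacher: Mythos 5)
Your proposal is correct and matches the paper's own (very short) argument: the paper proves this corollary exactly by invoking the Hausdorff-distance guarantee of Theorem~4 in \cite{crc:04} (as recalled in Remark~\ref{iid}) to secure $d_H(\mathcal{X}_n,S)\leq\eps_n$ almost surely for $n$ large, and then applying Theorem~\ref{mainth} with $\eps_n=C'(\ln(n)/n)^{1/d}$. Your additional sketch of the covering and Borel--Cantelli argument behind that citation is sound but not needed, since the paper treats it as a one-line reference.
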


As mentioned in Section 5.2 in  \cite{crc:04},  if $\eps_n=2 \max_i \min_{j\neq i} ||X_i-X_j||$, 
then with probability one, for $n$ large enough, $\eps_n\leq 2d_H(\X,S)$, which together with Corollary \ref{mainth1}, entails that, with the aforementioned choice for $\eps_n$, our proposal is fully data driven, for the iid case.\\

If the number of linear intersections  of $\partial S$ is assumed to be bounded by a constant $N_S$, the use of $\min(\hat{n}_{\eps_n},N_0)$ (for any $N_0\geq N_S$)
allows us to obtain better convergence rates.
\begin{theorem} \label{mainthbis} Let  $S\subset \mathbb{R}^d$  be a compact set fulfilling the outside and inside $\alpha$-rolling conditions.
	Assume also that $S$ is $(C,\eps_0)$-regular for some positive constants $C$ and $\eps_0$, and that the number of linear intersections of $\partial S$ is bounded by 
	$N_S$. 	Let  $\X =\{X_1,\dots,X_n\}\subset S$.
	Let $\eps_n \to 0$ be such that $d_H(\X, S)\leq \eps_n$ and $N_0\geq N_S$. Then,  
	\begin{equation*}
		\hat{I}^{N_0}_{d-1}(\mathbb{X}_n,\eps_n)=|\partial S|_{d-1}+\mathcal{O}(\eps_n).
	\end{equation*}
	
	Moreover, for $n$ large enough, $|\mathcal{O}(\eps_n)|\leq  4C(N_0+N_S)\eps_n/\beta(d).$
\end{theorem}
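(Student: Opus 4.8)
The plan is to compare $\hat{I}^{N_0}_{d-1}(\partial S)$ with the exact Crofton representation \eqref{gencroft} of $|\partial S|_{d-1}$, fibre by fibre over $\theta\in(\mathcal{S}^+)^{d-1}$, and to split each inner integral over $y\in\theta^\perp$ into the ``good'' lines (those far from every tangent space $T_x\partial S$ with $x\in E_\theta(\partial S)$, i.e. satisfying a condition $L(\eps)$ of the type announced before Definition 4) and the ``bad'' lines (those within distance $O(\eps_n)$ of some such tangent space). First I would show that for a good line $r_{\theta,y}$, once $\eps_n$ is small enough, the adjusted count $\hat{n}_{\eps_n}(\theta,y)$ equals exactly $n_{\partial S}(\theta,y)$: the key geometric input is that, away from tangent directions, the line crosses $\partial S$ transversally, $d_H(\X_n,S)\le\eps_n$ forces $\hat S_n(\eps_n)$ to approximate $S$ from inside and outside within $O(\eps_n)$ along the line, and the ``$4\eps_n$-merging'' step in Definition \ref{hatn} exactly removes the spurious extra components created by the sampling gaps while not merging across genuine holes/exterior arcs (this uses the inside/outside $\alpha$-rolling condition, which gives a uniform lower bound on the length of an exterior chord along a transversal line, so for $n$ large the $4\eps_n$ threshold is below that bound). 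On such lines $\min(\hat n_{\eps_n},N_0)=\min(n_{\partial S},N_0)=n_{\partial S}(\theta,y)$ since $n_{\partial S}(\theta,y)\le N_S\le N_0$, so the good-line contributions cancel.

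Next I would bound the bad-line contribution. On a bad line both $\hat n_{\eps_n}(\theta,y)$ and $n_{\partial S}(\theta,y)$ are truncated at $N_0$, so the integrand difference $|\min(\hat n_{\eps_n},N_0)-n_{\partial S}|$ is at most $N_0$ pointwise (here is where the truncation is essential and where the improvement over Theorem \ref{mainth} comes from: without it the uncontrolled blow-up of $\hat n_{\eps_n}$ near a tangency forced the $\sqrt{\eps_n}$ rate). It then remains to estimate the $\mu_{d-1}$-measure of the bad set in $\theta^\perp$. The bad lines are those whose foot-point $y$ lies within $O(\eps_n)$ of $F_\theta$, the normal projection of $E_\theta(\partial S)$ onto $\theta^\perp$ — more precisely one shows the ``close'' set is contained in $\theta^\perp\cap B(F_\theta,c\eps_n)$ for an explicit $c$ coming from the $\alpha$-rolling radius (a line within angle/distance $O(\eps_n)$ of a tangent plane at $x\in\partial S$ has its foot-point within $O(\eps_n)$ of $F_\theta$, the constant being governed by $\alpha$ via the curvature bound the rolling condition provides). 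By the $(C,\eps_0)$-regularity, $\varphi_\theta(c\eps_n)=\varphi_\theta(c\eps_n)-\varphi_\theta(0)\le C\cdot c\eps_n$ since $\varphi_\theta(0)=0$ and $\varphi_\theta'\le C$ on $(0,\eps_0)$. Hence the bad set has measure $O(\eps_n)$ uniformly in $\theta$, the inner integral of the difference is $O(N_0\,\eps_n)$ uniformly in $\theta$, and integrating the trivially-bounded $d\theta$ (total mass $1$) and dividing by $\beta(d)$ gives $|\hat{I}^{N_0}_{d-1}(\partial S)-|\partial S|_{d-1}|=O(\eps_n)$. Tracking the constants — the factor $2$ from $n_{\partial S}=2k_S$ being replaced by pairs of endpoints, the $N_0$ pointwise bound, the $C$ from regularity, and the $\eps_n$ from $\varphi_\theta$ — should yield the stated bound $2CN_0\eps_n$ for $n$ large enough.

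The main obstacle is the first step: proving that for good lines the merging rule reproduces $n_{\partial S}(\theta,y)$ exactly for all sufficiently small $\eps_n$, uniformly over the good set. This requires a careful quantitative argument that (i) every genuine arc of $S\cap r_{\theta,y}$ is detected (its length is bounded below in terms of $\alpha$ and the transversality angle, which is bounded away from $0$ on the good set), (ii) consecutive detected pieces separated by a genuine exterior gap are \emph{not} merged because that gap exceeds $4\eps_n$ (again an $\alpha$-rolling lower bound, valid once $\eps_n$ is small), and (iii) the sampling-induced fragmentation \emph{inside} a genuine arc \emph{is} merged because the gaps there are below $4\eps_n$ (this is exactly $d_H(\X_n,S)\le\eps_n$). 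Making ``small enough $\eps_n$'' uniform over $\theta$ and over the good $y$'s is where the $(C,\eps_0)$-regularity and compactness of $\partial S$ are used to get a single threshold; this is the technical heart and presumably where most of the Appendix work lies.
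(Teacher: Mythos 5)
Your plan follows the paper's proof essentially step for step: the good/bad split via condition $L(\eps_n)$, the exact recovery $\hat{n}_{\eps_n}(\theta,y)=n_{\partial S}(\theta,y)$ on lines far from tangency (the paper's Lemma \ref{propaux}, proved exactly via the rolling-ball chord bounds and the $4\eps_n$ merging rule you describe), and the bound $O(N_0\eps_n)$ on the bad set whose $\mu_{d-1}$-measure is controlled by $(C,\eps_0)$-regularity, with the truncation at $N_0$ being precisely what upgrades the $\sqrt{\eps_n}$ of Theorem \ref{mainth} to $\eps_n$. The only cosmetic difference is that you bound $|\min(\hat{n}_{\eps_n},N_0)-n_{\partial S}|\leq N_0$ pointwise on the bad set while the paper uses the triangle inequality to bound the two integrals separately by $CN_S\eps_n$ and $CN_0\eps_n$, both yielding the stated $2CN_0\eps_n$.
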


As before, we give the convergence rate associated to the iid setting and the RBM hypothesis as two corollaries
of Theorem \ref{mainthbis}.

\begin{corollary} \label{mainth1bis} Let  $S\subset\mathbb{R}^d$ be a compact set fulfilling the inside and outside $\alpha$-rolling conditions.
	Assume also that $S$ is $(C,\eps_0)$-regular for some positive constants $C$ and $\eps_0$, and that $\partial S$ has a bounded number of linear intersections. 
	Let $\X=\{X_1,\dots,X_n\}$ be the set of observations of an iid sample with distribution $P_X$, supported on $S$.
	Assume that $P_X$ has density $f$ (w.r.t. $\mu_d$)
	bounded from below by some $c>0$.
	Let $\eps_n=C'(\ln(n)/n)^{1/d}$ and $C'>(6/(c\omega_d))^{1/d}$.
	Then, with probability one, for $n$ large enough,
	\begin{equation*}
		\hat{I}^{N_0}_{d-1}(\mathbb{X}_n,\eps_n)=|\partial S|_{d-1}+\mathcal{O}\left(\left(\frac{\ln(n)}{n}\right)^{\frac{1}{d}}\right).
	\end{equation*}
\end{corollary}

Here again, the choice of $\eps_n=2 \max_i \min_j ||X_i-X_j||$ is suitable but now the price to pay is the selection of the parameter $N_0$.\\

In a more general setting, the conclusion of Theorem \ref{mainthbis} holds when the set of points $\X$ is replaced by the trajectory $\mathbb{X}_T$ of any stochastic process $\{X_t\}_{t>0}$ included in $S$, observed in $[0,T]$, such that $d_H(\mathbb{X}_T,S)\to 0$ as $T\to \infty$. Observe that the estimator $\hat{I}_{d-1}^{N_0}(\mathbb{X},\eps)$ is well defined, even when $\mathbb{X}_T$ is not a finite set (see Definition \ref{hatn}).  We will assume that $S$ is bounded with connected interior and $\partial S$ is $\mathcal{C}^2$.
This is the case (for example) of some reflected diffusions, and in particular the RBM.
This has recently been proven in Corollary 1 in  \cite{ch:16} for RBM without drift (see also \cite{ch:20} and \cite{ch:212} for the RBM with drift).
The definition of an RBM with drift is as follows: given a $d$-dimensional Brownian motion $\{B_t\}_{t\geq 0}$ departing from 
$B_0=0$ and  
defined on a filtered probability space $(\Omega,\mathcal{F},\{\mathcal{F}_t\}_{t\geq 0},\prob_x)$, an RBM with drift is the (unique) solution to the following stochastic differential equation on $S$:
\begin{equation*}
	X_t= X_0+ B_t-\frac{1}{2}\int_0^t\nabla_f(X_s)ds-\int_0^t\eta_{X_s}\xi(ds),
	\quad\text{ where } X_t\in \overline{D},\ \forall t\geq 0,
\end{equation*} 

\noindent where the drift, $\nabla_f (x)$, is given by the gradient of a function $f$ and is assumed to be Lipschitz, $\{\xi_t\}_{t\geq 0}$ is the corresponding local time; that is, a one-dimensional continuous non-decreasing process with $\xi_0=0$ that satisfies $\xi_t=\int_0^t\mathbb{I}_{\{X_s\in\partial S\}}d\xi_s$.  Since the drift is given by the gradient of a function and $S$ is compact, we have that its stationary distribution has a density bounded from below by a constant.

\begin{corollary}\label{maintdh2bis}  Let $S\subset \mathbb{R}^d$ be a non-empty compact set with connected interior such
	that $S=\overline{\mathring{S}}$, and suppose that $S$ fulfills  the outside and inside  $\alpha$-rolling conditions.
	Assume also that $S$ is $(C,\eps_0)$-regular for some positive constants $C$ and $\eps_0$ and that the number of linear intersections of $\partial S$ is bounded by 
	$N_S$. 	Let $\mathbb{X}_T\subset S$ be as before. 
	Then, with probability one, for $T$ large enough,
	\begin{equation*}
		\hat{I}_{d-1}^{N_0}(\mathbb{X}_T,\eps_T)=|\partial S|_{d-1}+o\left(\left(\frac{\ln(T)^2}{T}\right)^{\frac{1}{d}}\right),
	\end{equation*} 
	where $\eps_T=o((\ln(T)^2/T)^{1/d})$.
\end{corollary}

\subsection{The algorithm} \label{alg}

We will now describe an algorithm to compute  $\hat{n}_{\eps,\X}(\theta,y)$ for a given $(\theta,y)$, when the input is a finite set of $n$ elements and $\eps>0$.
For a reflected diffusion, we take $\X\subset \mathbb{X}_T$ to be a dense enough subset of $n$ points. Observe that this is not restrictive because $\mathbb{X}_T$ is stored as a finite set of points in a computer. 

\begin{enumerate} 
	\item[1.] For each $i$, compute   $d_i:=d(r_{\theta,y},X_i)=\sqrt{||X_i-y||^2-\langle X_i-y,\theta \rangle^2}$.
	\item[2.] Compute  the connected components $I_i$  of $r_{\theta,y}\cap \hat{S}_{\eps}(\X)$ according to the following steps:
	Initialize the list of the extremes of these intervals  by listz$=\emptyset$ and listl$=\emptyset$. Then,  
	for $i=1$ to $n$:
	\begin{itemize}
		\item If $d_i=\eps$, then $N_i=1$, $\ell_1=\langle X_i-y,\theta \rangle $ and $z_1=\mathcal{B}(X_i,\eps)\cap  r_{\theta,y}=y+\ell_1 \theta$
		\item  If $d_i<\eps$, then $N_i=2$ and compute $\ell_1=\langle X_i-y,\theta \rangle-\sqrt{\eps^2-d_i^2}$ and $\ell_2=\langle X_i-y,\theta \rangle+\sqrt{\eps^2-d_i^2}$. Then $z_1=y+\ell_1\theta $ and $z_2=y+\ell_2\theta $ such that $\{z_1,z_2\}=\mathcal{B}(X_i,\eps_n)\cap r_{\theta,y}$.
		\item For $j=1$ to $N_i$: if $d(z_j,\X)\geq \eps$, do listz=listz$\cup\{z_j\}$ and listl=listl$\cup\{\ell_j\}$.\\
		
		From the comments  at the beginning of subsection \ref{dwconj}, we know that, with probability one, listz equals $r_{\theta,y}\cap \partial \hat{S}_{\eps}$. 
		
		\item Sort listl. With probability one, listl has an even number, $2m$, of elements (see the comments at the beginning of subsection \ref{dwesti}),  and 
		define $a_i$ and $b_i$ such that $\ell_{2(i-1)+1}=a_i$, $\ell_{2i}=b_i$ (which corresponds to $a_i$ and $b_i$ in Definition \ref{hatn}; i.e. $(a_i,b_i)$ are the connected components of $r_{\theta,y}\cap \hat{S}_{\eps}(\mathbb{X})$).
	\end{itemize}
	
	\item[3] Obtain the $a'_i$ and $b'_i$ such that  $I'_i=(a_i',b'_i)$ are the connected components of $\hat{S}_{4\eps}(\X)\cap r_{\theta,y}$ by using the same procedure.
	
	\item[4.] Lastly, compute $\hat{n}_{\eps,\mathbb{X}}(\theta,y)$, as follows:\\
	initialization $\hat{n}_{\eps,\X}(\theta,y)=m$.\\
	
	For $i=1$ to $m-1$:\\
	
	\begin{itemize}
		\item If there exists $k$ such that $(b_i,a_{i+1})\subset I'_k$, then: 
		$$\hat{n}_{\eps,\X}(\theta,y)=\hat{n}_{\eps,\X}(\theta,y)-1.$$
	\end{itemize}
	
	\item[5.] $\hat{n}_{\eps,\X}(\theta,y)=2 \hat{n}_{\eps,\X}(\theta,y)$.

\end{enumerate}


\section{The approach based on the $\alpha$-convex hull}
\label{alphsec}
\subsection{The estimator based on the $\alpha'$-hull assuming the $\alpha$-rolling ball condition}
In \cite{alphshapr}, it was proven that in dimension two, under some regularity assumptions, the length of
the boundary of the $\alpha$-shape of an iid sample converges to the length of the boundary of the set. 
The $\alpha$-shape has the very good property that its boundary is very easy to compute, and hence so is its surface measure.
Unfortunately, we are unsure that the results can be extended to higher dimensions. Nevertheless, considering the $\alpha$-convex hull
(which is quite close to the $\alpha$-shape) allows us to extend the results on the surface measure to any dimension. 
The following deterministic theorem states that, for all  $0<\alpha'<\alpha$, the surface measure of the boundary of the $\alpha'$-convex hull $\X\subset S$  converges to $|\partial S|_{d-1}$ with a rate that depends on $d_H(\partial C_{\alpha'}(\X),\partial S)$.

\begin{theorem} \label{mainth2} Let  $S\subset\mathbb{R}^d$ be a compact set  such that $\partial S$ is a $(d-1)$-dimensional $\mathcal{C}^2$ manifold with reach $\alpha>0$.  Let $\alpha'<\alpha$ be a positive constant and 
	let $\X\subset S$ be a finite set such that  $d_H(\X,S)<\frac{1}{2}\frac{\alpha \alpha'}{\alpha+\alpha'}$  and $d_H(\partial C_{\alpha'}(\X),\partial S)\leq \eps_n$ with 
	$$\eps_n\leq \min\left\{\frac{\alpha \alpha'}{16(\alpha+\alpha')},\frac{1}{(d-1)\alpha}\right\}.$$ 
	Then,
	\begin{itemize}
		\item[1.] $\pi_{\partial S}:\partial C_{\alpha'}(\X)\rightarrow \partial S$ (where $\pi_{\partial S}(x)$ denotes the projection onto $\partial S$)
		is one to one, and  
		\item[2.] $\big||\partial S|_{d-1}-|\partial C_{\alpha'}(\X)|_{d-1}\big|\leq (d-1)\left(\frac{3}{2}\alpha+32\frac{\alpha+\alpha'}{\alpha \alpha'} \right)\eps_n (1+o(1))$.  
	\end{itemize}
\end{theorem}

As previously, we can deduce the convergence rates from the deterministic theorem and results in \cite{alphshapr} under the iid assumption. 

\begin{corollary} \label{mainth2iid} Let  $S\subset\mathbb{R}^d$ be a compact set such that $\partial S$ is a $(d-1)$-dimensional $\mathcal{C}^2$ manifold with reach $\alpha>0$.   Let $\X=\{X_1,\ldots,X_n\}$ be an iid sample of $X$ with distribution $P_X$ supported on $S$.
	Assume that $P_X$ has density $f$ (w.r.t. $\mu_d$)
	bounded from below by some $c>0$.
	Suppose $\alpha'<\alpha$. Then, with probability one, for $n$ large enough,
	$$\big||\partial S|_{d-1}-|\partial C_{\alpha'}(\X)|_{d-1}\big|=\mathcal{O}\left(\left(\frac{\ln(n)}{n}\right)^{\frac{2}{d+1}}\right).$$
\end{corollary}

In this case we do not need  the additional hypothesis of $(C,\eps_0)$-regularity. The convergence rate is far better than the one 
given in Theorem \ref{mainth},  where the price to pay is the computational cost  when $d$ increases.  Indeed, as detailed in next section, the computation of the $\alpha$-convex hull requires us to start by the computation of the Delaunay complex. With regard to the parameter selection  $\alpha'$, a fully
data driven (but computationally expensive) method is proposed in \cite{choosealpha}.

\subsection{Computation with the use of Crofton's formula}
Unfortunately, the explicit computation of $|\partial C_{\alpha}(\X)|_{d-1}$   is very difficult.   However, from the results in Lemma \ref{rconvhullgeo}, we derive that 	
we can make use Crofton's formulae and the Monte Carlo method to estimate $|\partial C_{\alpha}(\X)|_{d-1}$.  
This, as we will see, is based on the fact that the computation of $\check{n}_\alpha(\theta,y):= \ n_{\partial C_\alpha(\X)}(\theta,y)$ is feasible.
It first requires the computation of the  $\alpha$-convex hull, as well as the convex hull, of $\X$.
Recall that the convex hull $\mathcal{H}(\X)$ of $\X$ is equal to the intersection of a finite number of half-spaces 
$\mathcal{H}(\X)=\bigcap_{i=1}^K H_i$ with $H_i=\{x\in \mathbb{R}^d, \langle x-y_i,u_i\rangle \leq 0\}$ for some $\{y_1,\ldots,y_K\}\subset \R^d$ and $\{u_1,\ldots,u_K\}\subset \mathcal{S}^{d-1}$.
\begin{figure}[h!]\label{pts}
	\begin{center}
		\includegraphics[scale=0.5]{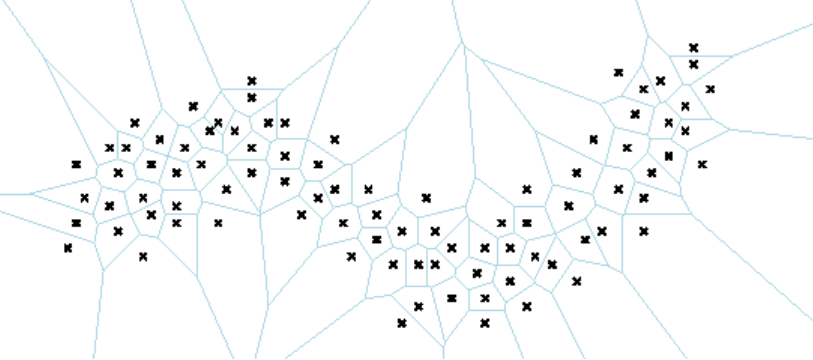}
		\caption{Points in $\mathbb{R}^2$ and the associated Voronoi diagram.}
	\end{center}
\end{figure}

In  \cite{edel}, it is proved for dimension $2$ that $C_\alpha(\X)^c$ is the union of a finite number of balls and the aforementioned half-spaces but mentioned that the generalization is not difficult.
The centres $O_i$ of these balls and  their radii $r_i$ are  obtained by computing the Delaunay complex. The computational cost of the Delaunay complex is the main part of the computational cost of our algorithm, which is defined as follows:  
\begin{enumerate}
	\item Compute all the Delaunay simplices $\sigma_i=\mathcal{H}(\{X_{i_1},\ldots,X_{i_{d+1}}\})$; that is, those such that 
	$\mathring{\B}(O_{i},r_{i})\cap \X=\emptyset$ and  $\partial \B(O_{i},r_{i})$ is the sphere circumscribed to $X_{i_1},\ldots,X_{i_{d+1}}$.
	
	\begin{figure}[h!]\label{Cr1}
		\begin{center}
			\includegraphics[scale=0.5]{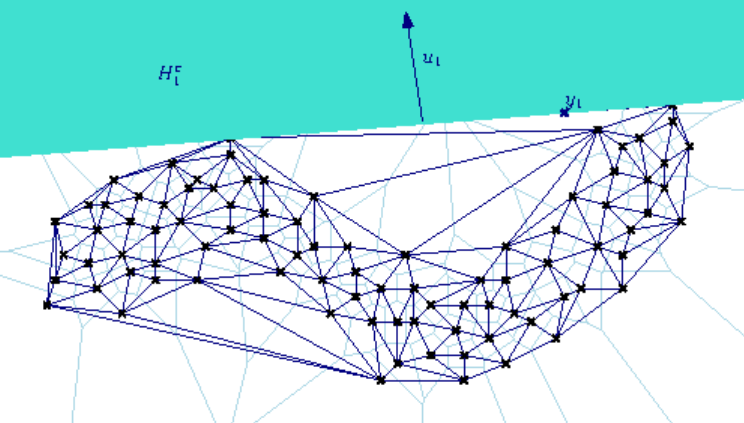}
			\caption{Points in $\mathbb{R}^2$, The associated Delaunay complex and an half space $H_1^c$ }
		\end{center}
	\end{figure}
	\item Sort the indices so that $r_i$ are decreasing, and define $K'=\#\{r_i,r_i\geq \alpha'\}$.
	\item Define $B_i^+=\mathring{\mathcal{B}}(O_i,r_i)$ for $i\in \{1,\ldots,K'\}$. Clearly, $r_i\geq \alpha'$ for all $i=1,\dots, K'$.
	
	\begin{figure}[h!]\label{Cr2}
		\begin{center}
			\includegraphics[scale=0.5]{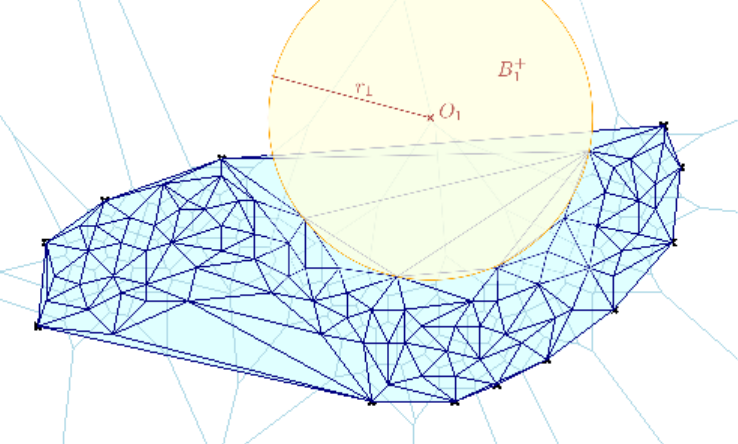}
			\caption{The convex hull of the points (blue) and a ball $B_1^+$ }
		\end{center}
	\end{figure}
	
	\begin{figure}[h!]\label{Cr3}
		\begin{center}
			\includegraphics[scale=0.5]{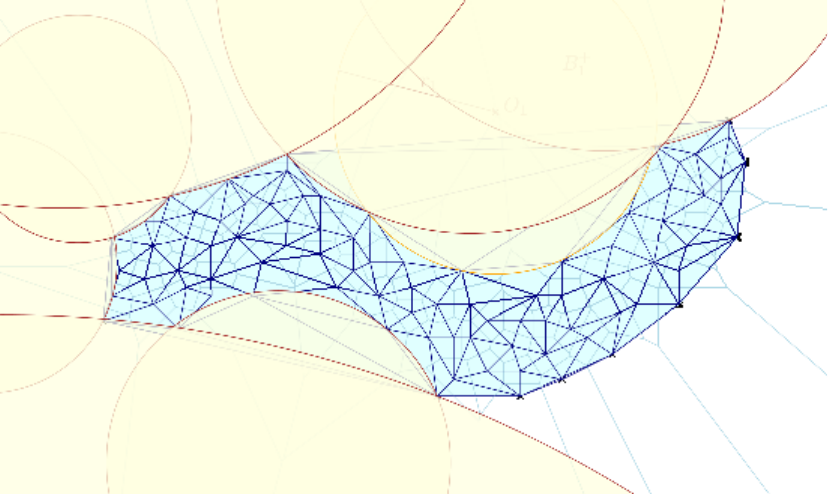}
			\caption{The convex hull of the points (blue) and all the $B_i^+$}
		\end{center}
	\end{figure}
	\item Compute the faces of the boundary of the $\alpha'$-shape (see \cite{edel}), which are the $f_i=\mathcal{H}(\{X_{i_1},\ldots,X_{i_d}\})$ such that 
	there exists a unique  $j\geq K'+1$ such that $f_i\subset \sigma_j$.\\
	
	\begin{figure}[h!]\label{Cr4}
		\begin{center}
			\includegraphics[scale=0.5]{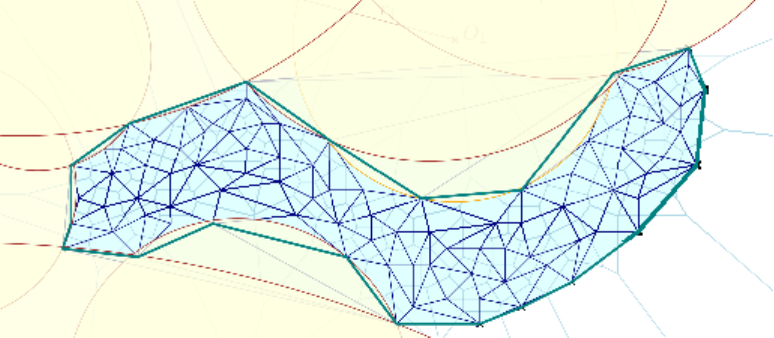}
			\caption{The convex hull of the points, all the $B_i^+$ and the boundary faces (green)}
		\end{center}
	\end{figure}
	
	Also compute $\Omega_i$ (resp. $\rho_i$), which is the center (resp. radius) of the sphere circumscribed to
	$X_{i_1},\ldots,X_{i_d}$ in the plane spanned by $X_{i_1},\ldots,X_{i_d}$. \\
	
	Now we have two different cases:
	\begin{enumerate}
		\item $f_i$ is a face of $\partial \mathcal{H}(\X)$; that is, there exists $j'$ such that $f_i\subset H_{j'}$. Then, define $w_i=u_{j'}$.
		\item $f_i$ is not a face of $\partial \mathcal{H}(\X)$, thus there exists $j'\leq K'$ such that  $f_i\subset \sigma_{j'}$.  Then, define $w_i=(O_{j}-O_{j'})/||O_{j}-O_{j'}||,$ with 
		$j\geq K+1$ such that $f_i\subset \sigma_j$.
	\end{enumerate}
	
	\begin{figure}[h!]\label{Cr5}
		\begin{center}
			\includegraphics[scale=0.5]{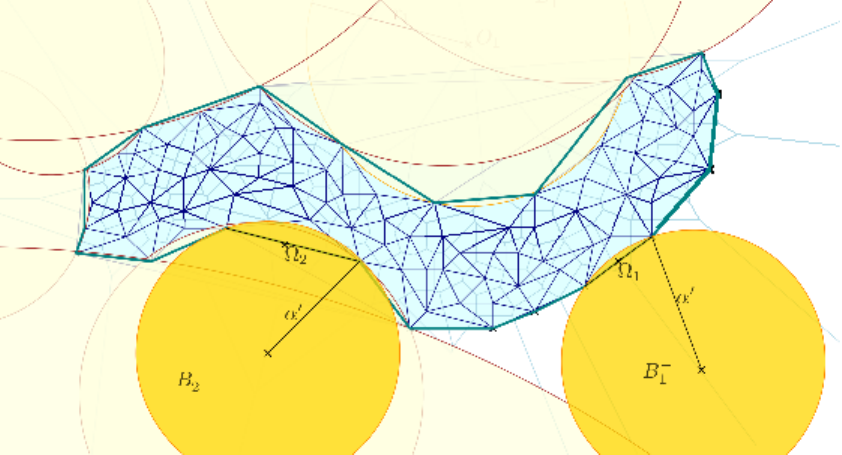}
			\caption{The convex hull of the points, all the $B_i^+$, the boundary faces (green) and two $B^-$. $B_1^-$ correspond to case $(a)$ and $B_2^-$ corresponds to case $(b)$.}
		\end{center}
	\end{figure}
	
	Define $B_i^{-}=\mathring{\mathcal{B}}(\Omega_i+\sqrt{\alpha'^2-\rho_i^2}w_i,\alpha')$. Then,
\end{enumerate}
\begin{equation}\label{rhull}
	C_{\alpha'}(\X)^c=\left(\bigcup_i H_i^c\right) \cup \left(\bigcup_iB_i^-\right)\cup \left(\bigcup_iB_i^+\right).
\end{equation}
To simplify notation, we write $C_\alpha(\X)^c=\bigcup_i B_i$.
Observe that if the line $r_{\theta,y}$ is chosen at random (w.r.t. $d\mu_{d-1} d\theta$),
with probability one, then we have 
$r_{\theta,y}\cap \partial B_i$, which contains less than three points.
\\

Initialize list=$\emptyset$. Then,

\noindent for each $i$,
\begin{itemize}
	\item compute $r_{\theta,y}\cap \partial B_i$.
	\item For all $z\in r_{\theta,y}\cap \partial B_i$, if for all $j$ $z\notin \mathring{B}_j$, then do list=list$\cup\{z\}$.
\end{itemize}
\noindent then $\check{n}(\theta,y)=\#$list.

\section{Discussion of the rates of convergence}
\label{rates}
In Corollary \ref{mainth2iid}, we obtained the same convergence rate as the one provided in  \cite{alphshapr} for $d=2$, which is conjectured as 
suboptimal. 
As mentioned in \cite{alphshapr}, if the measure of the symmetric difference between $S$ and an estimator $\hat{S}_n$ is bounded by $\eps_n$, then we can only expect that plug-in methods allow us to estimate
$\vert \partial S\vert_{d-1}$ 
with a convergence rate $\eps_n$. 
Thus, in the iid setting, the estimator defined by (6) (respectively (7) to (9)) can be seen as `optimal' relative to the use of the Devroye--Wise support estimator (respectively, the $\alpha$-convex hull support estimator) because they achieve the best possible convergence rates for those estimators. 
This is nevertheless far from being optimal: the minimax rate is conjectured to be $n^{-\frac{d+3}{2d+2}}$, 
which is the minimax rate for the volume estimation problem (see \cite{ar17}), and in \cite{kim} it is proved that the minimax rate
is the same for the volume estimation problem  and the surface area estimation problem (at least in the image setting, which usually extends to the iid setting).
Unfortunately, attaining this optimal rate for the surface area estimation problem  is much more involved, even in the easier
setting with data uniformly drawn in $S$ and  $S^c$ with perfect identification. No estimator attaining this rate has yet been proposed.

\section{Integralgeometric estimations via a Monte Carlo method and numerical experiments}\label{algo}

To estimate the surface area with a Monte Carlo method, we propose the following classical procedure.  
Generate a random sample
$\theta_1,\ldots,\theta_k$ that is uniformly distributed on $(\mathcal{S}^+)^{d-1}$.  For each $i=1,\dots,k$, 
draw a random sample $\aleph_i=\{y^i_1,\dots,y^i_{\ell}\}$ that is uniformly distributed on $[-L,L]^{d-1}\subset \theta_i^\perp$, independent of $\theta_1,\dots,\theta_k$, where $L=\max_{j=1,\dots,n} ||X_j||$. Then, the estimators are given by 
\begin{align}
	\hat{\hat{I}}_{d-1}^{({\ell},k)}(\partial S)=&\frac{(2L)^{d-1}}{\beta(d)}  \frac{1}{\ell k}\sum_{i=1}^k \sum_{j=1}^{\ell}\hat{n}_{\eps_n,\X}(\theta_i,y^i_j) \label{estim1} \\
	\hat{\hat{I}}_{d-1}^{({\ell},k,N_0)}(\partial S)=&\frac{(2L)^{d-1}}{\beta(d)}  \frac{1}{\ell k}\sum_{i=1}^k \sum_{j=1}^{\ell}\min(\hat{n}_{\eps_n,\X}(\theta_i,y^i_j),N_0) \label{estim1bis} \\
	\check{\check{I}}_{d-1}^{({\ell},k)}(\partial S)=&\frac{(2L)^{d-1}}{\beta(d)} \frac{1}{\ell k}\sum_{i=1}^k \sum_{j=1}^{\ell}\check{n}_r(\theta_i,y^i_j). \label{estim2} 
\end{align}

\section{Simulation study}
\label{simu}
The performance of \eqref{estim1} and \eqref{estim2} is illustrated through a simulation study.
We consider the sets 

$S(d,r)=\mathcal{B}_d(O,1)\setminus \mathring{\mathcal{B}}_d(O,r)$ for $d=2,3$, $r=0.5,0.6,0.7,0.8$ and $0.9$. 

On each set, we draw  $n=50,100,200,500,1000,2000$ and $4000$   iid random vectors supported on $S(d,r)$  , whose common distribution is $X=RZ$, where $R$  is a real valued 
random variable uniformly distributed on  $[1-r,1]$ and $Z$ is a random vector (independent of $R$) that is supported on the  $(d-1)$-dimensional   sphere.

For \eqref{estim1}, we computed the parameter $\eps_n$ as follows: for each sample point we calculate the distance to its 
closest point in the sample, and we   choose  $\eps_n$ as the third quantile of these $n$ distances.  
For \eqref{estim2}, we estimated the parameter $\alpha$ with the data-driven estimator proposed in \cite{choosealpha}. 
Roughly speaking, ``the largest value of $\alpha$ compatible with the $\alpha$-convexity assumption'' is chosen.

We choose $k=4000$ and $\ell=1$, at equation \eqref{estim1}  and the same for \eqref{estim2}.

To illustrate the convergence without the bias of the Monte Carlo step we compare our estimator with the Crofton based surface area
estimation on the true (unknown) set based on the same line sample.
More precisely, for each example (given by a dimension $d$, a radius $r$, a sample size $n$ and an experiment number $i$) we draw 
$\X$ as previously explained and $4000$ values of $(\theta_j,y_j)$ (i.e $4000$ lines) and then compute :

\begin{equation}\label{dwest}
	E_{i}^{DW}(d,n):=\frac{\sum_{j=1}^{4000} \left(\hat{n}_{\eps,\X}(\theta_j,y_j)-n_{\partial S_r}(\theta_j,y_j)\right)}
	{\sum_{j=1}^{4000} n_{\partial S_r}(\theta_j,y_j)} 
\end{equation}
and 
\begin{equation}\label{alfhull}
	E_{i}^{CH}(d,n):=\frac{\sum_{j=1}^{4000} \hat{n}_{\partial C_{\alpha}(\X)}(\theta_j,y_j)-n_{\partial S_r}(\theta_j,y_j)}
	{\sum_{j=1}^{4000} n_{\partial S_r}(\theta_j,y_j)}
\end{equation}

In Figure \ref{fig31} we show, for each $d$ and $r$, 
the results of the proposed method based on 57 experiment replications.
Black curves represent results for r-convex hull based surface area estimator.  
We present here the evolution of the extremal values of the error given by \eqref{alfhull} and \eqref{dwest} (dots), the $5\%$ and $95\%$ quantiles (dashed),  the $25\%$ , $50\%$ and the $75\%$ quantiles (plain).
The convergence towards $0$ (blue line) can be observed.
In red we present the same curves for the case of  the Devroye-Wise based estimator.
As expected due to theoretical results, convergence is quicker for the r-convex hull estimator than for the Devroye-Wise based surface area estimator (same curves, in red). 
This is particularly clear when $r_0\geq 0.7$.

\begin{figure}[htb] \centering \includegraphics[scale=0.45]{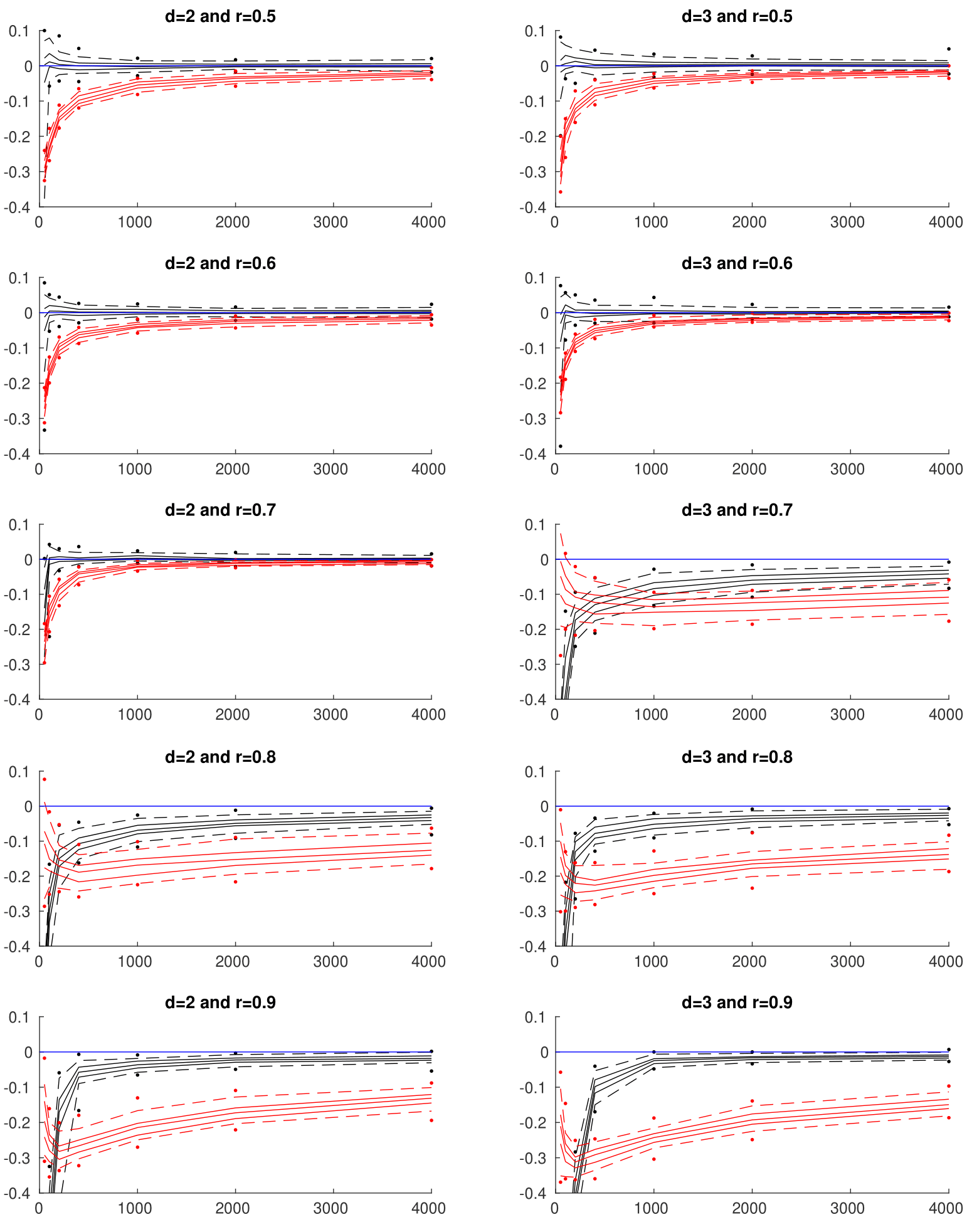} 
	\caption{We show in each panel results based on 57 replications of \eqref{dwest} (red) and \eqref{alfhull} (black), for different values of $d$ and $r$. Varying $n\in \{50,100,200,500,1000,2000,4000\}$. Dots are extremal values, dashed $5\%$ and $95\%$ quantiles, and plain lines $25\%$, $50\%$ (median) and $75\%$ quantiles. }
	\label{fig31} \end{figure}

\section{Appendix}

\subsection{Proofs of Theorems \ref{mainth} and \ref{mainthbis}}

\subsubsection*{Sketch of the proofs of Theorems \ref{mainth} and \ref{mainthbis}}

The idea is to consider separately  two subsets of  the set of lines that intersect $\partial \hat{S}_{\eps_n}(\X)$:

\begin{enumerate}
	\item If a line $r_{\theta,y}=y+\R\theta$ is `far enough' (fulfilling condition $L(\eps)$ for some $\eps>0$, see Definition \ref{A}) 
	from the tangent spaces, then our algorithm allows a perfect estimation of $n_{\partial S}(\theta,y)$, see Lemma \ref{propaux}.
	\item Considering the set of lines that are not `far enough' from the tangent spaces (denoted by $\mathcal{A}_{\eps_n}(\theta)$),
	see Definition \ref{A}),  Corollary \ref{coraux2} states that, under $(C,\epsilon_0)$-regularity, the integral of $\hat{n}_{\eps_n,\X}(\theta,y)$  on $\mathcal{A}_{\eps_n}(\theta)$ is
	bounded from above by $C'\eps_n^{1/2}$, where $C'$ is a positive constant.
	Theorem \ref{mainthbis} states that the previous bound can be improved to $C'\eps_n$, under $(C,\epsilon_0)$-regularity, if $\partial S$
	has a bounded number of linear intersections.
\end{enumerate}

\subsubsection{Condition   $L(\eps)$ }

We now define the two sets of lines to be tackled separately.
The lines that are `far' from an affine tangent space and the lines that are `close to being tangent' to $\partial S$.
More precisely, recall that  the unit outer normal vector $\eta_x$ at $x$ is well defined under the rolling ball hypothesis
(see Remark \ref{rem1}). 
Now we define 
\begin{equation*}
	\mathcal{T}_S=\{x+(\eta_x)^{\perp}: \ x\in \partial S\},
\end{equation*}
the  collection  of all the affine $(d-1)$-dimensional  tangent  spaces.

\begin{definition}\label{A}
	Let $\eps\geq 0$. A line $r_{\theta,y}=y+\mathbb{R}\theta$ fulfills \textbf{condition $L(\eps)$} if $y$ is at a distance
	larger than  $4\eps$ from all the affine
	hyper-planes $w+\eta^\perp \in \mathcal{T}_S$  satisfying $\langle \eta,\theta\rangle=0$;  
	that is, for all $x\in \partial S$ such that $\langle \eta_x,\theta \rangle=0$  we have that $d(y,x+\eta_x^{\perp})> 4 \eps$

	For a given $\theta$, we define
	\begin{equation*}
		\mathcal{A}_{\eps}(\theta)=\Big\{y \in \theta^{\perp}: ||y|| \leq diam(S) \text{ and } r_{\theta,y} \text{ does not satisfy } L(\eps)\Big\}.
	\end{equation*}
\end{definition}

\begin{remark}\label{rem2}
	Notice that
	$\ell_{\theta}(y)=\min_{x\in E_{\theta}(\partial S)}d(y,x+\eta_x^{\perp})$ is well defined because $E_{\theta}$ is compact and $x\mapsto \eta_x$
	is a continuous function, due to the regularity of $\partial S$. Moreover, if $y\in \theta^{\perp}$, then  $\ell_{\theta}(y)=d(y,F_{\theta})$; and consequently
	for all $t\in(0,d(y,F_{\theta})/4)$, $r_{\theta,y}$ satisfies the condition $L(d(y,F_{\theta})/4-t)$.
\end{remark}

\subsubsection{Some useful lemmas}

\begin{lemma} \label{inter} Let $S$ be a compact set fulfilling the outside and inside $\alpha$-rolling conditions.
	Let $r_{\theta,y}$ be a line that fulfills condition $L(0)$ and $r_{\theta,y}\cap \partial S\neq\emptyset$. Then, $r_{\theta,y}$ intersects $\partial S$ in a finite number of points.
\end{lemma}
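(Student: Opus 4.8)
The plan is to argue by contradiction: suppose the line $r_{\theta,y}$ meets $\partial S$ in infinitely many points. Since $r_{\theta,y}\cap \partial S$ is a subset of the compact set $r_{\theta,y}\cap S$ (note $\partial S\subset S$ because $S$ is closed), by Bolzano--Weierstrass there is a point $x_0\in r_{\theta,y}\cap \partial S$ and a sequence of distinct points $x_k\in r_{\theta,y}\cap\partial S$ with $x_k\to x_0$. The first step is to exploit the inside and outside $\alpha$-rolling conditions, which by Theorem 1 in \cite{wal99} (invoked in Remark \ref{rem1}) imply that $\partial S$ is a $\mathcal{C}^1$ manifold; in particular the outward unit normal $\eta_{x_0}$ is well defined and the map $x\mapsto \eta_x$ is continuous on $\partial S$. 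I would then show that, because the $x_k$ lie on the straight line $r_{\theta,y}$ and converge to $x_0$ along that line, the secant directions $(x_k-x_0)/\|x_k-x_0\|$ are all equal to $\pm\theta$, so the tangent line to $\partial S$ at $x_0$ (which is the limit of secants for a $\mathcal{C}^1$ curve; in higher dimensions one restricts to the $\mathcal{C}^1$ curve $r_{\theta,y}\cap\partial S$ or argues that $\theta$ lies in $T_{x_0}\partial S$) contains the direction $\theta$. Equivalently $\langle \eta_{x_0},\theta\rangle = 0$, i.e. $r_{\theta,y}$ is tangent to $\partial S$ at $x_0$ and $x_0$ lies on the affine tangent hyperplane $x_0+\eta_{x_0}^\perp\in\mathcal{T}_S$ with $\langle\eta_{x_0},\theta\rangle=0$.

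This, however, contradicts condition $L(0)$: by Definition \ref{A}, $L(0)$ forces $y$ to be at distance strictly larger than $0$ from every affine hyperplane $w+\eta^\perp\in\mathcal{T}_S$ with $\langle\eta,\theta\rangle=0$, so in particular $y\notin x_0+\eta_{x_0}^\perp$. But $y\in r_{\theta,y}$ and $x_0\in r_{\theta,y}$, so $y-x_0$ is parallel to $\theta$; since $\theta\in\eta_{x_0}^\perp$ we get $y-x_0\in\eta_{x_0}^\perp$, hence $y\in x_0+\eta_{x_0}^\perp$ — the desired contradiction. Therefore $r_{\theta,y}\cap\partial S$ is finite.

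The main obstacle is the step identifying $\theta$ with a tangent direction of $\partial S$ at the accumulation point $x_0$ in the general $d$-dimensional setting: one must be careful that $\mathcal{C}^1$ regularity of the hypersurface is genuinely enough to conclude that an accumulating sequence of intersection points with a line forces tangency (for a merely $\mathcal{C}^1$ hypersurface the intersection $r_{\theta,y}\cap\partial S$ need not itself be a $\mathcal{C}^1$ curve). I would handle this by working locally: near $x_0$, write $\partial S$ as a $\mathcal{C}^1$ graph over $T_{x_0}\partial S$ and parametrize the line; the condition that infinitely many line points lie on the graph, together with differentiability of the graph function at the base point, forces its differential to annihilate the projected line direction, which is exactly $\langle\eta_{x_0},\theta\rangle=0$. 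The rolling-ball hypotheses are used only to guarantee this $\mathcal{C}^1$ structure (and could presumably be relaxed), so the heart of the argument is this elementary local computation followed by the immediate contradiction with $L(0)$.
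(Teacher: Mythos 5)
Your proposal is correct and follows essentially the same route as the paper's proof: extract an accumulation point of the intersection, observe that the secant directions are $\pm\theta$ so that $\theta$ lies in the tangent space at that point, and contradict condition $L(0)$. Your local-graph justification of the limit-of-secants step for a merely $\mathcal{C}^1$ hypersurface fills in a detail the paper leaves implicit, but the argument is the same.
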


\begin{proof} Because $S$ fulfills the outside and inside $\alpha$-rolling conditions, Theorem 1  in \cite{wal99} implies that for any $x\in \partial S$, the affine $(d-1)$-dimensional tangent space $T_x\partial S$ exists.
	If $r_{\theta,y}$ fulfills $L(0)$, then  $r_{\theta,y}$ is not included in any hyper-plane tangent to $S$.
	Suppose that $\partial S\cap r_{\theta,y}$ is not finite. Then, by compactness, one
	can extract a subsequence $t'_n\in \partial S\cap r_{\theta,y}$ that converges to $y'\in \partial
	S\cap r_{\theta,y}$.
	\begin{enumerate}
		\item Because $t_n'$ and $y'$ are in $r_{\theta,y}$, we have that, for all $n$, $(t'_n-y')/||t'_n-y'||=\pm\theta$.
		\item Because $t_n'$ and $y'$ are in $\partial S$, which is a $(d-1)$-dimensional $\mathcal{C}^1$ manifold (see Theorem 1 in \cite{wal99}), and $t_n'\rightarrow y'$, we have $\lim_{n\rightarrow +\infty} (t'_n-y')/||t'_n-y'||\in  T_{y'}\partial S$ (see Definition 4.3 in \cite{fed:56}).  
	\end{enumerate}
	These two facts imply that $\theta \in T_{y'}\partial S$,
	which contradicts the assumption that $r_{\theta,y}$ is not included in any hyper-plane tangent to $S$.
\end{proof}

\begin{lemma}\label{lemaux} Let $S\subset \mathbb{R}^d$ be a compact set
	fulfilling the outside and inside $\alpha$-rolling conditions.
	Let $\eps>0$ be such that  $\eps<\alpha/4$ and 	$\nu=2[2\eps(\alpha-2\eps)]^{1/2}$.
	For any line  $r_{\theta,y}$ fulfilling condition $L(\eps)$ and $r_{\theta,y}\cap \partial S\neq\emptyset$, we have that 
	$r_{\theta,y}$ meets $\partial S$ at a finite number of points
	$t_1,\dots,t_k$, where $t_{i+1}-t_i\geq 2\nu$ for all $i=1,\dots,k-1$.
	Consequently, if $\eps<\alpha/4$, then $k=\# (r_{\theta,y}\cap \partial S) \leq \text{diam}(S)\eps^{-1/2}/(4\sqrt{\alpha})$.
\end{lemma}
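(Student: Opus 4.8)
The plan is to prove that consecutive intersection points of $r_{\theta,y}$ with $\partial S$ are more than $2\nu$ apart, and then deduce the bound on $k$ from a one–dimensional packing argument along the line. Since $L(\eps)$ implies $L(0)$ and $r_{\theta,y}\cap\partial S\neq\emptyset$, Lemma~\ref{inter} already gives a finite intersection $t_1<\dots<t_k$ (ordered along the line), with $t_i=p(\lambda_i)$, $p(\lambda)=y+\lambda\theta$. Fix $i<k$ and set $\ell=\lambda_{i+1}-\lambda_i=\|t_{i+1}-t_i\|$. The open segment $(t_i,t_{i+1})$ is connected and misses $\partial S$, hence lies in $\mathring S$ or in $(\overline S)^c$; replacing $S$ by $\overline{S^c}$ if necessary (this set satisfies both rolling conditions, has the same boundary, and the same family of tangent hyperplanes, so condition $L(\eps)$ is unaffected) I may assume $(t_i,t_{i+1})\subset\mathring S$. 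I may also assume $\ell\le 2\alpha$, since otherwise $\ell>2\alpha\ge 2\nu$ at once (one checks $\nu^2=8\eps(\alpha-2\eps)\le\alpha^2$).

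The heart of the argument is to look at the deepest point of the chord. Consider $h(\lambda)=d(p(\lambda),\partial S)$ on $[\lambda_i,\lambda_{i+1}]$: it vanishes at the endpoints, is positive inside, and $1$–Lipschitz, so it attains an interior maximum at some $\lambda^*$ with $r:=h(\lambda^*)\le\ell/2\le\alpha$. Choosing $w\in\partial S\cap\partial\mathcal B(p(\lambda^*),r)$ — nonempty since $\mathcal B(p(\lambda^*),r)\subset\mathring S$ touches $\partial S$ — the ball is tangent to $\partial S$ from inside at $w$, so $\eta_w=(w-p(\lambda^*))/r$, i.e. $p(\lambda^*)=w-r\eta_w$. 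I then claim $\langle\eta_w,\theta\rangle=0$: using that the interior of the inside rolling ball $\mathcal B(w-\alpha\eta_w,\alpha)\subset S$ contains no boundary point, one has $d(x,\partial S)\ge\alpha-\|x-(w-\alpha\eta_w)\|$ for every $x$ in this ball, and a short computation shows that displacing $p(\lambda^*)$ along $\pm\theta$ with sign opposite to $\langle\eta_w,\theta\rangle$ strictly raises this lower bound above $r$, contradicting the maximality of $\lambda^*$. With $\langle\eta_w,\theta\rangle=0$ the line $r_{\theta,y}$ is parallel to the tangent hyperplane $T_w=w+\eta_w^{\perp}$ and passes through $w-r\eta_w$, so $\mathrm{dist}(y,T_w)=r$; since $T_w$ is one of the hyperplanes appearing in condition $L(\eps)$ (because $\langle\eta_w,\theta\rangle=0$), this forces $r>4\eps$.

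It remains to intersect $r_{\theta,y}$ with $\mathcal B(w-\alpha\eta_w,\alpha)\subset S$. As $\theta\perp\eta_w$ and $p(\lambda^*)$ lies at distance $\alpha-r<\alpha$ from the centre, this intersection is a chord centred at $p(\lambda^*)$ of half–length $\sqrt{\alpha^2-(\alpha-r)^2}=\sqrt{r(2\alpha-r)}$; its relative interior lies in $\mathring S$ and contains $p(\lambda^*)\in(t_i,t_{i+1})$, hence is contained in the connected component $(t_i,t_{i+1})$ of $r_{\theta,y}\cap\mathring S$, which gives $\ell\ge 2\sqrt{r(2\alpha-r)}$. Since $t\mapsto t(2\alpha-t)$ is strictly increasing on $[0,\alpha]$ and $4\eps<r\le\alpha$, we get $\ell>2\sqrt{4\eps(2\alpha-4\eps)}=2\nu$. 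Finally, $t_1,\dots,t_k$ are collinear and ordered, so $\mathrm{diam}(S)\ge\|t_k-t_1\|=\sum_{i<k}(t_{i+1}-t_i)>2\nu(k-1)$; and for $\eps<\alpha/4$ one has $2\nu=4\sqrt{2\eps(\alpha-2\eps)}>4\sqrt{\eps\alpha}$, which yields the stated bound on $k$.

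I expect the middle step to be the real obstacle: condition $L(\eps)$ only constrains the distance from $y$ to those tangent hyperplanes that are \emph{parallel} to $\theta$, so one has to manufacture such a hyperplane close to the segment $(t_i,t_{i+1})$. Recognizing that the foot $w$ of the deepest point of the chord provides exactly such a hyperplane — because there the ``gradient'' of the distance to $\partial S$ points along $\theta$, forcing $\eta_w\perp\theta$ — is the crux; once this is in place, only elementary geometry of the two rolling balls is needed.
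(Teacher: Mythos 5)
Your proof is correct, but it takes a genuinely different route from the paper's. The paper argues by contradiction: assuming $t_{i+1}-t_i<2\nu$, it reads off the signs of $\langle\eta_{t_i},\theta\rangle$ and $\langle\eta_{t_{i+1}},\theta\rangle$ from the rolling balls at the two endpoints, invokes Theorem 3.8 of Colesanti--Manselli to produce a boundary curve $\gamma$ joining $t_i$ to $t_{i+1}$ that stays within $4\eps$ of the line, and applies the intermediate value theorem to $s\mapsto\langle\eta_{\gamma(s)},\theta\rangle$ to find a tangent hyperplane violating $L(\eps)$. You instead work directly at the deepest point of the chord: first-order optimality of $\lambda\mapsto d(p(\lambda),\partial S)$ forces the normal $\eta_w$ at the foot $w$ to be orthogonal to $\theta$ (your perturbation of $\|q_t-c\|^2=(\alpha-r)^2+2t(\alpha-r)\langle\eta_w,\theta\rangle+t^2$ is sound), so $L(\eps)$ forces the depth $r>4\eps$, and the rolling ball at $w$ then cuts a chord of half-length $\sqrt{r(2\alpha-r)}>\nu$ inside the component $(t_i,t_{i+1})$. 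Your version is more self-contained --- it dispenses with the Colesanti--Manselli connectivity result, which is the one external ingredient in the paper's argument beyond Walther's theorem --- and it makes transparent why $\nu=2\sqrt{2\eps(\alpha-2\eps)}$ is exactly the right threshold (it is the half-chord of the $\alpha$-ball at depth $4\eps$); the paper's route avoids having to locate the tangency point explicitly. Two small points to tidy up: the perturbation step needs $r<\alpha$, so the degenerate case $r=\alpha$ (which can only occur when $\ell=2\alpha$, hence $\ell\ge 2\nu$ anyway) should be flagged; and ``replacing $S$ by $\overline{S^c}$'' sacrifices compactness, which is harmless here since only the final diameter bound uses it, but it is cleaner to say you rerun the same local argument with the outside rolling ball. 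Neither affects correctness, and your count bound matches the paper's up to the same additive $+1$ that the paper itself glosses over.
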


\begin{proof} If a line fulfills condition $L(\eps)$, then it fulfills condition $L(0)$. Consequently, the fact that $r_{\theta,y}$ intersects $\partial S$ in a finite number of points follows from Lemma \ref{inter}.
	Let us denote by $t_1<\dots <t_k$ the intersection of $r_{\theta,y}$ with
	$\partial S$.
	
	Let us denote by $\eta_{t_i}$ and $\eta_{t_{i+1}}$
	the outer normal vectors at $t_i$ and $t_{i+1}$, respectively.
	We have
	two cases: the open interval $(t_i,t_{i+1})\subset S^c$ or 	$(t_i,t_{i+1})\subset \mathring{S}$.
	Let us consider the first case (the proof
	for the second one is similar).
	
	Because $(t_i,t_{i+1})\subset \overline{S^c}$ and $S$ fulfills the inside $\alpha$-rolling condition on $t_i$, there exists a $z\in S$ such that $t_i\in
	\partial \mathcal{B}(z,\alpha)$ and $\mathcal{B}(z,\alpha)\subset S$.
	In
	particular, $\mathcal{B}(z,\alpha)\cap(t_i,t_{i+1})=\emptyset$, which implies
	$\langle \eta_{t_i}, \theta\rangle \geq 0$.
	
	Reasoning in the same way
	but with $t_{i+1}$,  we get $\langle \eta_{t_{i+1}}\theta\rangle \leq 0$.
	Given that
	$r_{\theta,y}$ is not included in any tangent hyperplane, we have that  $\langle
	\eta_{t_i}, \theta\rangle > 0$ and $\langle \eta_{t_{i+1}},
	\theta\rangle < 0$.
	
	If, for some $i$, $t_{i+1}-t_i<2\nu$, then, by Theorem
	3.8 in \cite{cm10},  there exists a curve $\gamma:[0,1]\to \partial S$ such
	that $\gamma(0)=t_i$, $\gamma(1)=t_{i+1}$ and
	$d(\gamma(t),r_{\theta,y})\leq 4\eps $ for all $t$. 	
	We also have the inside and outside $\alpha$-rolling conditions, which implies that $x\mapsto \eta_x$ is Lipschitz (see Theorem 1 in \cite{wal99}).
	From 
	$\langle \eta_{t_i}, \theta\rangle > 0$ and $\langle \eta_{t_{i+1}},
	\theta\rangle < 0$, it follows that there exists a $s_0\in (0,1)$ such that
	$\langle \eta_{\gamma(s_0)}, \theta\rangle=0$, which contradicts the hypothesis that $y$ is at
	a distance larger than $4\eps$ from all the  $(d-1)$-dimensional
	hyperplanes tangent to $S$.
	This proves that $t_{i+1}-t_i\geq 2\nu$ for all
	$i=1,\dots,k-1$.
\end{proof}

\begin{lemma}\label{boundint1}
	Let $S\subset \mathbb{R}^d$ be a compact set fulfilling the outside and inside $\alpha$-rolling conditions, with
	a $(C,\eps_0)$-regular boundary. Then, for all $\eps \leq \min\{\eps_0,\alpha\}/4$,
	
	\begin{equation*}\label{boundint1eq1}
		\int_{\theta \in {(\mathcal{S}^+)}^{d-1}} \int_{y\in \mathcal{A} _{\eps}(\theta)} n_{\partial S}(\theta,y) d\mu_{d-1}(y)d\theta \leq 2C \frac{\text{diam}(S)}{\sqrt{\alpha}}\sqrt{\eps}.
	\end{equation*}
	
	Moreover, if $\partial S$ has bounded number of linear intersections, then
	\begin{equation}\label{boundint1eq2}
		\int_{\theta \in {(\mathcal{S}^+)}^{d-1}} \int_{y\in \mathcal{A} _{\eps}(\theta)} n_{\partial S}(\theta,y) d\mu_{d-1}(y)d\theta \leq 4C N_S \eps.
	\end{equation}    
	
\end{lemma}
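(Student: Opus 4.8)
The plan is to fix $\theta\in(\mathcal S^+)^{d-1}$, bound the inner integral $\int_{\mathcal A_{\eps}(\theta)}n_{\partial S}(\theta,y)\,d\mu_{d-1}(y)$, and then integrate in $\theta$ (which carries unit mass). First I would record what $\mathcal A_{\eps}(\theta)$ actually is. Every hyperplane $w+\eta_w^\perp$ entering condition $L(\eps)$ in Definition \ref{A} has $\langle\eta_w,\theta\rangle=0$, hence contains the direction $\theta$ and is parallel to the line $r_{\theta,y}$; the distance from $r_{\theta,y}$ to the tangency point $w\in E_\theta$ equals $\|y-\pi_{\theta^\perp}(w)\|$, so condition $L(\eps)$ amounts to $d(y,F_\theta)>4\eps$. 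Consequently
$$\mathcal A_{\eps}(\theta)=\{y\in\theta^\perp:\ \|y\|\le \text{diam}(S),\ d(y,F_\theta)\le 4\eps\}\ \subset\ \theta^\perp\cap B(F_\theta,4\eps),$$
which is precisely the family of lines used in the proof of Lemma \ref{lemaux}.

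Next I would establish a pointwise estimate of $n_{\partial S}(\theta,y)$ on $\mathcal A_{\eps}(\theta)$; a constant bound is impossible (near-tangent lines meet $\partial S$ arbitrarily often), but Lemma \ref{lemaux} controls the number of crossings in terms of how far $y$ is from $F_\theta$. Assume $\eps$ small enough that $4\eps<\alpha$, and let $\delta:=d(y,F_\theta)>0$ (true for a.e.\ $y$), so $\delta<\alpha$. By the description above, for every $\eps'<\delta/4$ the line $r_{\theta,y}$ satisfies $L(\eps')$ with $\eps'<\alpha/4$, so Lemma \ref{lemaux} gives $n_{\partial S}(\theta,y)\le \text{diam}(S)(\eps')^{-1/2}/(4\sqrt\alpha)$; letting $\eps'\uparrow\delta/4$ yields
$$n_{\partial S}(\theta,y)\ \le\ \frac{\text{diam}(S)}{2\sqrt{\alpha\,d(y,F_\theta)}}.$$

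Finally I would integrate this by slicing $\mathcal A_{\eps}(\theta)$ along the level sets of $y\mapsto d(y,F_\theta)$. Writing $\varphi_\theta(t)=|\theta^\perp\cap B(F_\theta,t)|_{d-1}$ and using the layer-cake (coarea) identity, for $4\eps\le\eps_0$,
$$\int_{\mathcal A_{\eps}(\theta)}n_{\partial S}(\theta,y)\,d\mu_{d-1}(y)\ \le\ \frac{\text{diam}(S)}{2\sqrt\alpha}\int_0^{4\eps}t^{-1/2}\varphi_\theta'(t)\,dt\ \le\ \frac{C\,\text{diam}(S)}{2\sqrt\alpha}\int_0^{4\eps}t^{-1/2}\,dt,$$
since $\varphi_\theta'\le C$ by $(C,\eps_0)$-regularity; this is a constant multiple of $C\,\text{diam}(S)\,\alpha^{-1/2}\sqrt\eps$, and integrating in $\theta$ gives the first bound (the sharp constant $C\,\text{diam}(S)/(2\sqrt\alpha)$ is recovered by using the quantitative separation $\nu=2\sqrt{2\eps'(\alpha-2\eps')}$ from Lemma \ref{lemaux} instead of the crude estimate). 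When $\partial S$ has at most $N_S$ linear intersections (Definition \ref{boundinter}) one instead uses $n_{\partial S}(\theta,y)\le N_S$ everywhere, so $\int_{\mathcal A_{\eps}(\theta)}n_{\partial S}\le N_S\,\varphi_\theta(4\eps)\le 4CN_S\eps$, and integrating in $\theta$ gives \eqref{boundint1eq2}; the gain from $\sqrt\eps$ to $\eps$ is exactly the disappearance of the integrable singularity $t^{-1/2}$. The one genuinely delicate point is the first step — identifying $\mathcal A_{\eps}(\theta)$ as an $O(\eps)$-neighbourhood of $F_\theta$ in $\theta^\perp$ (not a union of full-width slabs), which is what keeps the bad set small; everything after that is the combination of the $O(d(y,F_\theta)^{-1/2})$ crossing bound with the integrability of $t^{-1/2}$ against $d\varphi_\theta$ guaranteed by $(C,\eps_0)$-regularity, plus bookkeeping of constants.
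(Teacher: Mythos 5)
Your argument is essentially the paper's own proof: the pointwise bound $n_{\partial S}(\theta,y)\le \text{diam}(S)\big(\alpha\, d(y,F_\theta)\big)^{-1/2}/2$ coming from Lemma \ref{lemaux}, followed by slicing $\mathcal{A}_{\eps}(\theta)$ along the level sets of $d(\cdot,F_\theta)$ and using $\varphi'_\theta\le C$ from the $(C,\eps_0)$-regularity to integrate the $t^{-1/2}$ singularity (respectively the constant $N_S$) over $(0,\cdot)$. The only divergence is bookkeeping: you retain the factor $4$ from Definition \ref{A} and integrate $t$ up to $4\eps$, so you obtain the two bounds only up to an absolute constant factor, whereas the paper silently identifies $\mathcal{A}_{\eps}(\theta)$ with the $\eps$-neighbourhood of $F_\theta$ in $\theta^\perp$ to land exactly on $C\,\text{diam}(S)\sqrt{\eps}/(2\sqrt{\alpha})$ and $CN_S\eps$; your remark that the stated constants can be ``recovered'' via the separation $\nu$ is not actually substantiated, but this affects nothing beyond the constants in Theorems \ref{mainth} and \ref{mainthbis}.
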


\begin{proof}  Observe that 
	\begin{multline*}
		\int_{\theta \in {(\mathcal{S}^+)}^{d-1}} \int_{y\in \mathcal{A} _{\eps}(\theta)} n_{\partial S}(\theta,y) d\mu_{d-1}(y) d\theta  \\
		= \int_{\theta \in {(\mathcal{S}^+)}^{d-1}} \int_{\ell=0}^{4\eps} \int_{\{y\in \theta^\perp:d(y,F_{\theta})=\ell\}} n_{\partial S}(\theta,y) d\mu_{d-2}(y)d\ell  d\theta.
	\end{multline*}
	According to Remark \ref{rem2}   if $y\in \theta^\perp:d(y,F_{\theta})=\ell$,   then, for all $t\in(0,\ell)$, $r_{\theta,y}$ fulfills 
	$L(\ell/4-t)$. From the proof of the previous lemma,
	it follows that for any $y\in \theta^{\perp}$ with $d(y,F_{\theta})=\ell$ and $\ell<4\eps$, 
	and any $t\in(0,\ell/4)$ 
	$$n_{\partial S}(\theta,y)\leq \text{diam}(S)(\ell/4-t)^{-1/2}/(4\sqrt{\alpha}).$$
	Hence, with $t\rightarrow 0$ we obtain 
	$n_{\partial S}(\theta,y)\leq \text{diam}(S)(\ell)^{-1/2}/(2\sqrt{\alpha})$, from which:

	\begin{align*}
		\int_{\theta \in {(\mathcal{S}^+)}^{d-1}} \int_{\ell=0}^{4\eps} \int_{\{y\in \theta^\perp:d(y,F_{\theta})=\ell\}} n_{\partial S}(\theta,y) d\mu_{d-2}(y)d\ell  d\theta& =  \\	
		& \hspace{-7cm}  \leq \int_{\theta \in {(\mathcal{S}^+)}^{d-1}} \int_{\ell=0}^{4\eps} \int_{\{y\in \theta^\perp: d(y,F_{\theta})=\ell\}} \frac{1}{  2   }\text{diam} (S)(\alpha \ell)^{-1/2} d\mu_{d-2}(y)d\ell d\theta\\
		& \hspace{-7cm} \leq  \int_{\theta \in {(\mathcal{S}^+)}^{d-1}} \int_{\ell=0}^{4\eps}\frac{1}{  2   }\text{diam} (S)(\alpha \ell)^{-1/2}  \int_{\{y\in\theta^\perp d(y,F_{\theta})=\ell\}} d\mu_{d-2}(y)d\ell  d\theta\\
		& \hspace{-7cm} \leq  \int_{\theta \in {(\mathcal{S}^+)}^{d-1}} \int_{\ell=0}^{4\eps}\frac{1}{  2   }\text{diam} (S)(\alpha \ell)^{-1/2} |\big\{y\in \theta^\perp: d(y,F_{\theta})=\ell\big\}|_{d-2}d\ell  d\theta.
	\end{align*}
	By the definition of $\varphi_\theta$,
	\begin{equation*}
		\Big|\big\{y\in \theta^\perp: \ell \leq d(y,F_{\theta})\leq \ell +d\ell  \big\}\Big|_{d-1}=\varphi_{\theta}(\ell +d\ell )-\varphi_{\theta}(\ell ).
	\end{equation*}
	From the $(C,\eps_0)$-regularity of $\partial S$ and the mean value theorem we obtain
	\begin{equation*}
		\Big|\big\{y\in \theta^\perp: d(y,F_{\theta})=\ell\big\}\Big|_{d-2}\leq \sup_{\eps\in (0,4\eps_0)} \varphi_\theta'(\eps)\leq C,
	\end{equation*}
	which implies
	\begin{multline*}
		\int_{\theta \in {(\mathcal{S}^+)}^{d-1}} \int_{y\in \mathcal{A} _{\eps}(\theta)} n_{\partial S}(\theta,y)d\mu_{d-1}(y)d\theta 
		\leq \\\int_{\theta \in {(\mathcal{S}^+)}^{d-1}} \int_{\ell=0}^{4\eps}  C \frac{1}{  2   }\text{diam} (S)(\alpha \ell)^{-1/2} d\ell d\theta
		\leq   2   C  \frac{\text{diam}(S)}{\sqrt{\alpha}}\sqrt{\eps}.
	\end{multline*}
	
	By applying exactly the same reasoning, under the hypothesis of the boundedness of the number of linear intersections for $\partial S$, we get
	\begin{equation*}
		\int_{\theta \in {(\mathcal{S}^+)}^{d-1}} \int_{y\in \mathcal{A} _{\eps}(\theta)} n_{\partial S}(\theta,y)d\mu_{d-1}(y)d\theta 
		\leq \int_{\theta \in {(\mathcal{S}^+)}^{d-1}} \int_{\ell=0}^{4\eps}  C N_S d\ell d\theta
		\leq  4C N_S\eps.
	\end{equation*}
	
\end{proof}

\begin{remark} \label{rem0} If in the proof of Lemma \ref{boundint1} we take $\ell=0$,  then we obtain that the measure of the set of lines belonging to some half-space tangent to $\partial S$ is 0.
\end{remark}

\begin{lemma}\label{propaux}
	Let $S$ be a compact set fulfilling the outside and inside $\alpha$-rolling conditions.
	Let $\X=\{X_1,\dots,X_n\} \subset S$.
	Let $\eps_n\to 0$ be such that $  d_H(\X, S)\leq \eps_n$.
	Let  $r_{\theta,y}=y+\R\theta$ be   any  line fulfilling condition $L(\eps_n)$.
	Then, for $n$ large enough so that $4\varepsilon_n<\alpha$, $n_{\partial S}(\theta,y)=\hat{n}_{\eps_n,\X}(\theta,y)$.
\end{lemma}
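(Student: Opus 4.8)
The plan is to separate the lines $r_{\theta,y}$ according to whether they meet $S$. If $r_{\theta,y}\cap\partial S=\emptyset$, then, $S$ being compact (and $S=\overline{\mathring S}$ bounded), $r_{\theta,y}\cap S=\emptyset$, so $d(r_{\theta,y},S)>0$; let $(p_0,s_0)\in r_{\theta,y}\times S$ realize this distance. Then $p_0$ is the point of $r_{\theta,y}$ nearest to $s_0$, so $p_0-s_0\perp\theta$, and $\mathring{\mathcal B}(p_0,\|p_0-s_0\|)\cap S=\emptyset$, so by the $\mathcal C^1$ character of $\partial S$ (Remark \ref{rem1}) one has $\eta_{s_0}=(p_0-s_0)/\|p_0-s_0\|$, hence $\langle\eta_{s_0},\theta\rangle=0$. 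Thus $s_0+\eta_{s_0}^\perp$ is one of the hyperplanes appearing in condition $L(\eps_n)$, it is parallel to $r_{\theta,y}$, and $d(y,s_0+\eta_{s_0}^\perp)=d(r_{\theta,y},s_0+\eta_{s_0}^\perp)=\|p_0-s_0\|=d(r_{\theta,y},S)$. Condition $L(\eps_n)$ then forces $d(r_{\theta,y},S)>4\eps_n$, hence $d(r_{\theta,y},\mathcal X_n)\ge d(r_{\theta,y},S)>\eps_n$ (as $\mathcal X_n\subset S$), so $\hat S_n(\eps_n)\cap r_{\theta,y}=\emptyset$ and $\hat n_{\eps_n}(\theta,y)=0=n_{\partial S}(\theta,y)$.

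In the remaining case $r_{\theta,y}\cap S\neq\emptyset$, so $r_{\theta,y}\cap\partial S\neq\emptyset$; since $L(\eps_n)$ implies $L(0)$, Lemma \ref{inter} gives $r_{\theta,y}\cap\partial S=\{t_1<\cdots<t_k\}$ with $k<\infty$, and $L(0)$ forces $\langle\eta_{t_i},\theta\rangle\neq0$, so $r_{\theta,y}$ crosses $\partial S$ transversally at each $t_i$. Consequently the signs $\langle\eta_{t_i},\theta\rangle$ alternate, $k$ is even, $n_{\partial S}(\theta,y)=k$, and $r_{\theta,y}\cap S$ has exactly $k/2$ connected components $J_1,\dots,J_{k/2}$. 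It thus suffices to show $\hat n_{\eps_n}(\theta,y)=2(k/2)$. I would first reformulate the bookkeeping of Definition \ref{hatn}: the merging procedure places two components of $\hat S_n(\eps_n)\cap r_{\theta,y}$ in the same $A_i$ exactly when the segment joining them lies in $\{x\in r_{\theta,y}:d(x,\mathcal X_n)<4\eps_n\}$, i.e.\ when they lie in one connected component of this open set; hence $\hat n_{\eps_n}(\theta,y)=2j$, where $j$ is the number of components of $\{x\in r_{\theta,y}:d(x,\mathcal X_n)<4\eps_n\}$ that meet $\hat S_n(\eps_n)=\{x\in r_{\theta,y}:d(x,\mathcal X_n)\le\eps_n\}$. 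Because $d_H(\mathcal X_n,S)\le\eps_n$ gives $J_l\subset S\subset\hat S_n(\eps_n)$, the assignment of each $J_l$ to the component containing it is well defined, and the goal becomes to prove it is a bijection onto those $j$ components, which gives $j=k/2$.

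Both directions of the bijection come from $L(\eps_n)$ combined with the fact that, since $4\eps_n<\alpha$ and the rolling conditions make $\partial S$ a set of reach $\ge\alpha$, the function $\lambda\mapsto d(y+\lambda\theta,S)$ is $\mathcal C^1$ wherever it is $<\alpha$, with derivative $\langle\theta,\eta_{\pi_S(y+\lambda\theta)}\rangle$, and $\pi_S$ is continuous there and extends continuously to $\partial S$. For injectivity it is enough to treat consecutive $J_l,J_{l+1}$ (the general case follows): if they mapped to one component, then $d(\cdot,\mathcal X_n)<4\eps_n$, hence $d(\cdot,S)<4\eps_n$, throughout the gap $[t_{2l},t_{2l+1}]$, so $\lambda\mapsto\pi_S(y+\lambda\theta)$ extends to a continuous path $\gamma$ in $\partial S$ from $t_{2l}$ to $t_{2l+1}$ with $\|\gamma(\lambda)-(y+\lambda\theta)\|<4\eps_n$; since $\langle\eta_{t_{2l}},\theta\rangle>0>\langle\eta_{t_{2l+1}},\theta\rangle$, the intermediate value theorem yields $w\in\partial S$ on $\gamma$ with $\langle\eta_w,\theta\rangle=0$ and $d(y,w+\eta_w^\perp)=d(r_{\theta,y},w+\eta_w^\perp)<4\eps_n$, contradicting $L(\eps_n)$. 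For surjectivity, take $x$ in a component meeting $\hat S_n(\eps_n)$, so $d(x,S)\le d(x,\mathcal X_n)\le\eps_n$ and $x$ lies in a component $D$ of $\{x'\in r_{\theta,y}:d(x',S)\le\eps_n\}$; if $D\cap S=\emptyset$, then $d(\cdot,S)$ has an interior local minimum on $D$ with value in $(0,\eps_n]$, at which point the above derivative vanishes, again producing $w\in\partial S$ with $\langle\eta_w,\theta\rangle=0$ and $d(y,w+\eta_w^\perp)\le\eps_n<4\eps_n$, contradicting $L(\eps_n)$; hence $D$ meets $S$, contains some $J_l$, and since $D\subset\{d(\cdot,\mathcal X_n)\le2\eps_n\}\subset\{d(\cdot,\mathcal X_n)<4\eps_n\}$, this $J_l$ lies in the component of $x$.

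The short verifications are the coverage inclusion $r_{\theta,y}\cap S\subset\hat S_n(\eps_n)$, the alternation of crossings, and the precise claim that the merging in Definition \ref{hatn} amounts to grouping components of $\hat S_n(\eps_n)\cap r_{\theta,y}$ by the components of $\{d(\cdot,\mathcal X_n)<4\eps_n\}$. The step I expect to require the most care is the ``no spurious component'' argument, i.e.\ ruling out components of $\hat S_n(\eps_n)\cap r_{\theta,y}$ disjoint from $S$: this is exactly where the $\mathcal C^1$/positive-reach regularity of $\partial S$ is used (differentiability of $\lambda\mapsto d(y+\lambda\theta,S)$, the identity $\tfrac{d}{d\lambda}d(y+\lambda\theta,S)=\langle\theta,\eta_{\pi_S(y+\lambda\theta)}\rangle$, and continuity of $\pi_S$ up to $\partial S$), and where one crucially uses the matching between the constant $4$ in $L(\eps_n)$ and the threshold $4\eps_n$ in Definition \ref{hatn}.
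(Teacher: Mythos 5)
Your proof is correct and follows essentially the same route as the paper's: the two key mechanisms — projecting an $S^c$-gap onto $\partial S$ and applying the intermediate value theorem to $\langle\eta_{\gamma(t)},\theta\rangle$ to show the gap cannot be merged, and using the derivative $\langle\theta,\eta_{\pi_{\partial S}(\cdot)}\rangle$ of the distance function together with $L(\eps_n)$ to rule out spurious components — are exactly those of the paper's two-sided argument $\hat n_{\eps_n}\geq n_{\partial S}$ and $\hat n_{\eps_n}\leq n_{\partial S}$, merely repackaged as a bijection between the components of $r_{\theta,y}\cap S$ and the merged intervals $A_i$. Your explicit handling of the case $r_{\theta,y}\cap\partial S=\emptyset$ (via the foot of the distance from the line to $S$) fills in a step the paper only sketches.
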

\begin{proof} 
	Throughout this proof, we will use the following notation when $r_{\theta,y}\cap \partial S\neq\emptyset$.
	Let $t_1<\ldots<t_{2k}$ be the intersection of $r_{\theta,y}$ with $\partial S$. This set is finite due to Lemma \ref{inter} 
	and is an even number because condition $L(\eps_n)$ is fulfilled. In addition, $[t_{2(i-1)+1},t_{2i}]\subset S$ for all $i=1,\dots,k$ and $(t_{2i},t_{2i+1})\subset S^c$ for all $i=1,\dots,k-1$. 
	
	First, we will prove that
	\begin{equation}\label{hatnsupn}
		\hat{n}_{\eps_n,\X}(\theta,y)\geq n_{\partial S}(\theta,y).
	\end{equation}

	If $r_{\theta,y}\cap \partial S=\emptyset$, then  inequality \eqref{hatnsupn} holds. Assume $r_{\theta,y}\cap \partial S\neq\emptyset$.
	We will now prove that
	\begin{equation}\label{eloi1}
		\text{if }(t_i,t_{i+1})\subset S^c \text{, then: } \exists s \in  (t_i,t_{i+1}) \text{ such that } d(s,S)> 4 \eps_n.
	\end{equation}
	Because $S$ fulfills the inside $\alpha$-rolling condition on $t_i$, there exists a $z_i\in S$ 
	such that $t_i\in \partial \mathcal{B}(z_i,\alpha)$  and $\mathcal{B}(z_i,\alpha)\subset S$.
	Since $\mathcal{B}(z_i,\alpha)\cap(t_i,t_{i+1})=\emptyset$, it follows that $\langle \eta_{t_i}, \theta\rangle \geq 0$ (recall that 
	$\eta_{t_i}=(t_i-z_i)/\alpha$ and $t_{i+1}-t_i=||t_{i+1}-t_i||\theta$).
	Reasoning in the same way but with $t_{i+1}$, $\langle \eta_{t_{i+1}}, \theta\rangle \leq 0$.
	By condition $L(\eps_n)$, we obtain
	\begin{equation}\label{TVI}
		\langle \eta_{t_i}, \theta \rangle > 0 \text{ and } 
		\langle \eta_{t_{i+1}}, \theta\rangle < 0.
	\end{equation}	
	Suppose that  for all $t\in(t_i,t_{i+1})$   we have  $d(t,\partial S)\leq 4\eps_n$.
	Take $n$ large enough so that $4\eps_n<\alpha$.
	Because $S$ fulfills the outside and inside $\alpha$-rolling conditions, by Lemma 2.3 in \cite{bea09}, $\partial S$ has positive reach greater than $\alpha$.
	Then, by Theorem 4.8 in \cite{fed:56}, $\gamma=\{\gamma(t)=\pi_{\partial S}(t),t\in (t_i,t_i+1)\}$, the orthogonal projection onto $\partial S$ of  the  interval $(t_i,t_{i+1})$ is well defined and is a continuous curve in $\partial S$.
	By Theorem 1 in \cite{wal99}, the map
	from $\partial S$ to $\mathbb{R}^d$  $x\mapsto \eta_x$  is Lipschitz.
	Thus, 
	$t\mapsto\langle \eta_{\gamma(t)},\theta \rangle $ is a continuous function of $t$ for all $t\in (t_{i},t_{i+1})$, which, together with  \eqref{TVI}, ensures 
	the existence of an $s\in(t_i,t_{i+1})$ such that $d(s,\gamma(s))\leq 4\eps_n$ 
	and $\theta\in \eta_{\gamma(s)}^{\perp}$, which contradicts the assumption that $r_{\theta,y}$ fulfills condition $L(\eps_n)$.
	This proves  \eqref{eloi1}.
	
	From \eqref{eloi1}, we easily obtain (because $s\in S^c$ and $\X\subset S$) that
	\begin{equation}\label{eloi1bis}
		\text{if }(t_i,t_{i+1})\subset S^c \text{, then: } \exists s \in  (t_i,t_{i+1}) \text{ such that } d(s,\X)> 4 \eps_n.
	\end{equation}
	
	To conclude \eqref{hatnsupn} let us write, for $i=1,\dots,k$, $I'_i=[t_{2(i-1)+1},t_{2i}]$ for the connected components 
	of $S\cap r_{\theta,y}$. Since $d_H(\X,S)<\eps_n$, $S\subset \hat{S}_{\eps_n}(\X)$. Then, for $i=1,\dots,k$, there exists a $j$ such that $I'_i\subset I_j$, $I_j$ being a connected 
	component of $\hat{S}_{\eps_n}\cap r_{\theta,y}$. Note now that \eqref{eloi1bis} ensures that, for all $i\neq i'$, if $I'_i\subset I_j$ and  $I'_{i'}\subset I_{j'}$ then $I_{j'}$ and $I_j$ are not in the same connected component of $\hat{S}_{4\eps_n}(\X)$ thus they are not glued, and then  $\hat{n}_{\eps_n,\X}(\theta,y)\geq n_{\partial S}(\theta,y)$. 
	\\
	Next, we will prove the opposite inequality, 
	\begin{equation}\label{nsuphatn}
		\hat{n}_{\eps_n,\X}(\theta,y)\leq n_{\partial S}(\theta,y).
	\end{equation}
	
	Assume first $r_{\theta,y}\cap \partial S\neq\emptyset$. 
	Consider $t^*\in (t_i,t_{i+1})\subset S^c$ and $t^*\in \hat{S}_{\eps_n}(\X)$.
	Equation  (\ref{nsuphatn}) will be derived from the fact that $(t^*,t_{i+1}]\subset  \hat{S}_{4\eps_n}(\X)  \cap r_{\theta,y}$ or
	$[t_i,t^*)\subset  \hat{S}_{4\eps_n}(\X) \cap r_{\theta,y}$ and thus the connected component of $\hat{S}_{\eps_n}(\X)\cap r_{\theta,y}$ that contained $t^*$ is glued with the one that contains $[t_{i-1},t_i]$ or with the one that  contains $[t_{i},t_{i+1}]$.
	
	Introduce $\psi(t):(t_i,t_{i+1})\rightarrow \mathbb{R}$ defined by $\psi(t)=d(t,\partial S)$.
	Consider the points $t\in (t_i,t_{i+1})$ such that $d(t,\partial S)<\alpha$,  and let $p_t\in \partial S$ be such that $||p_t-t||=d(t,\partial S)$.
	By item (3) in Theorem 4.8 in \cite{fed:56}, $\psi'(t)= \langle \eta_{p_t}, \theta \rangle$.
	
	Let $X_j$ be the closest observation to $t^*$ (recall that because $t^*\in \hat{S}_{\eps_n}(\X)$, we have $||X_j-t^*||\leq \eps_n$). Now, because there exists a point $p^*\in [t^*,X_j]\cap \partial S$, we obtain that $\psi(t^*) \leq \eps_n$ and, because $r_{\theta,y}$ fulfils $L(\eps_n)$, $\langle \eta_{p_{t^*}},\theta \rangle \neq 0$.
	
	Assume that, for instance,  $\langle \eta_{p_{t^*}},\theta \rangle <0$. Then, $\psi(t^*)\leq \eps_n$ and $\psi'(t^*)<0$.
	Suppose that there exists a $t'\in (t^*,t_{i+1})$ such that $\psi(t')\geq \eps _n$ and consider $t''=\inf \{t>t^*, \psi(t')\geq\eps_n\}$. Then for all $t\in (t^*,t'')$, we have $\psi(t)\leq \eps_n<\alpha$, and thus $\psi$ is differentiable on this interval (using again item (3) of Theorem 4.8 in \cite{fed:56}).
	From the fact that $\psi(t'')\geq \psi(t^*)$ and $\psi'(t^*)<0$ we deduce that there exists a $\tilde{t}\in (t^*,t'')$ such that $\psi'(\tilde{t})=0$, which contradicts $L(\eps_n)$ because $\psi(\tilde{t})\leq \eps_n$.
	To summarize, we have shown   that 
	if $\langle \eta_{p_{t^*}},\theta\rangle <0$, then for all $t\in(t^*,t_{i+1})$ we have that $d(t,\partial S)\leq \eps_n$, and thus 
	$(t^*,t_{i+1})\subset  \hat{S}_{2\eps_n}(\X)\subset  \hat{S}_{4\eps_n}(\X)$.
	
	Symmetrically, if $\langle \eta_{p_{t^*}},\theta\rangle >0$, then $(t_i,t^*)\subset  \hat{S}_{2\eps_n}(\X)\subset  \hat{S}_{4\eps_n}(\X)$.
	
	Thus, we now have that if $r_{\theta,y}\cap \partial S\neq \emptyset$, then $\hat{n}_{\eps_n,\X}(\theta,y)\leq n_{\partial S}(\theta,y)$.
	
	Now we are going to prove that for a line  $r_{\theta,y}$ fulfilling condition $L(\eps_n)$ we cannot have  $r_{\theta,y}\cap \partial S=\emptyset$ and $\hat{n}_{\eps_n,\X}(\theta,y)>0$.
	Reasoning by contradiction, upon assuming that  $r_{\theta,y}\cap \partial S=\emptyset$ and $\hat{n}_{\eps_n,\X}(\theta,y)>0$, we have that $0<\min\{||x-y||,x\in r_{\theta,y}, y\in S\}\leq \eps_n$. Now the regularity condition also gives that  if this minimum is realized for $x^*$ and $y^*$, then we have $y^*\in \partial S$ and $\theta \in T_{y^*}\partial S$, which contradicts condition  $L(\eps_n)$. 
\end{proof}

\begin{lemma} \label{lemaux4} Let $S\subset \mathbb{R}^d$ be a compact set
	fulfilling the outside and inside $\alpha$-rolling conditions.
	Let $\X\subset S$ and suppose $\eps_n\to 0$ is a sequence such that $d_H(\X, S)\leq \eps_n$, while $r_{\theta,y}=y + \R \theta$ and $A_1, \ldots, A_k$ are the sets in Definition \ref{hatn},  $A_i=(a_i,b_i)$ for $i=1,\dots,k$. Now suppose  that the sets are indexed in such a way that $a_1<b_1<a_2<\ldots<b_k$.
	Then, for all $i=2,\dots,k$, we have that   $||a_i-b_{i-1}||>3\sqrt{\eps_n\alpha}$ and for all $i=1,\dots,k$, $||b_{i}-a_{i}||>3\sqrt{\eps_n\alpha}$,
	for $n$ large enough so that 	$3\sqrt{\alpha  \eps_n}<\alpha/2$,
	which implies 
	\begin{equation*}
		\hat{n}_{\eps_n,\X}(\theta,y)\leq \frac{\text{diam}(S)}{3\sqrt{\alpha}}\eps_n^{-1/2}.
	\end{equation*}
\end{lemma}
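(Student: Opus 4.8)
The plan is to first prove the separation estimate $\|a_i-b_{i-1}\|>3\sqrt{\eps_n\alpha}$ for each $i\in\{2,\dots,k\}$, and then deduce the bound on $\hat n_{\eps_n}(\theta,y)$ by a packing argument, in the spirit of the last line of Lemma~\ref{lemaux}. Fix $i$, set $G=(b_{i-1},a_i)$ and parametrize $r_{\theta,y}$ by arc length. Since $A_{i-1}$ and $A_i$ are consecutive but were not merged by the rule in Definition~\ref{hatn}, $G$ is a maximal open subinterval of $r_{\theta,y}$ disjoint from $\hat S_n(\eps_n)$, and some point of $G$ is at distance $\ge 4\eps_n$ from $\X_n$, hence at distance $\ge 3\eps_n$ from $S$ because $d_H(\X_n,S)\le\eps_n$. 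Also $b_{i-1},a_i\in\partial\hat S_n(\eps_n)$, so $d(b_{i-1},S),d(a_i,S)\le\eps_n$; consequently $d(\cdot,S)$ restricted to $\overline G$ attains its maximum at an interior point $t^*$, with $\delta^*:=d(t^*,S)\ge3\eps_n$. If $\delta^*\ge\alpha$, then since $d(\cdot,S)$ is $1$-Lipschitz and $t^*$ lies between $b_{i-1}$ and $a_i$, $\|a_i-b_{i-1}\|\ge 2(\delta^*-\eps_n)\ge 2(\alpha-\eps_n)>3\sqrt{\eps_n\alpha}$ for $n$ large (by the hypothesis $3\sqrt{\eps_n\alpha}<\alpha/2$).

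Assume now $\delta^*<\alpha$. Because $S$ has positive reach (Lemma~2.3 in \cite{bea09}), Theorem~4.8 in \cite{fed:56} gives that $d(\cdot,S)$ is differentiable near $t^*$ with gradient $\eta_{s^*}$, where $s^*:=\pi_S(t^*)\in\partial S$; since $t^*$ is an interior maximum of $d(\cdot,S)$ along $r_{\theta,y}$, the directional derivative vanishes there, i.e. $\langle\theta,\eta_{s^*}\rangle=0$. Let $\mathcal B(x^*,\alpha)$, with $x^*=s^*+\alpha\eta_{s^*}$, be the outside $\alpha$-rolling ball at $s^*$, so that $\mathring{\mathcal B}(x^*,\alpha)\cap S=\emptyset$. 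Then $x^*-t^*=(\alpha-\delta^*)\eta_{s^*}$ is orthogonal to $\theta$, hence $t^*$ is the foot of the perpendicular from $x^*$ to $r_{\theta,y}$ and, writing $s$ for the arc-length parameter with $t^*$ at $s=0$, $\|(t^*+s\theta)-x^*\|^2=(\alpha-\delta^*)^2+s^2$.

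Set $\tau:=\sqrt{(\delta^*-\eps_n)(2\alpha-\eps_n-\delta^*)}$, which is positive since $\eps_n\le\delta^*<\alpha$, and note $\tau^2=\delta^*(2\alpha-\delta^*)-\eps_n(2\alpha-\eps_n)<\delta^*(2\alpha-\delta^*)$. For $|s|<\tau$ the point $t^*+s\theta$ then lies in $\mathring{\mathcal B}(x^*,\alpha)$, hence outside $S$; and if it lay in $\hat S_n(\eps_n)$ then $d(\cdot,S)\le\eps_n$ would force $\|(t^*+s\theta)-x^*\|\ge\alpha-\eps_n$, i.e. $s^2\ge\tau^2$, a contradiction. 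So the whole segment $\{t^*+s\theta:|s|<\tau\}$ avoids $\hat S_n(\eps_n)$ and contains $t^*\in G$, hence is contained in the maximal gap $G$, giving $\|a_i-b_{i-1}\|\ge2\tau$. Minimizing $(\delta-\eps_n)(2\alpha-\eps_n-\delta)$ over $\delta\in[3\eps_n,\alpha]$ (the minimum being at $\delta=3\eps_n$) yields $\|a_i-b_{i-1}\|\ge4\sqrt{\eps_n(\alpha-2\eps_n)}>3\sqrt{\eps_n\alpha}$ for $n$ large. Finally, the $k$ sets $A_1<\dots<A_k$ all lie inside $\hat S_n(\eps_n)\cap r_{\theta,y}$, of diameter at most $\mathrm{diam}(S)+2\eps_n$, and are separated by $k-1$ gaps each exceeding $3\sqrt{\eps_n\alpha}$; counting, $k=\tfrac12\hat n_{\eps_n}(\theta,y)$ is of order $\eps_n^{-1/2}$, which gives the asserted bound for $n$ large.

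The delicate step is the case $\delta^*<\alpha$: one must notice that at the point of $G$ farthest from $S$ the line $r_{\theta,y}$ is parallel to the tangent space $T_{s^*}\partial S$, so the outside rolling ball at $s^*$ provides a curvature-driven quantitative estimate; the crude $1$-Lipschitz bound alone only produces a gap of order $\eps_n$, which would not suffice for the $\sqrt{\eps_n}$ rate in Theorem~\ref{mainth}. Verifying the remaining inequalities is routine given the hypotheses $\eps_n<\alpha/2$ and $3\sqrt{\eps_n\alpha}<\alpha/2$.
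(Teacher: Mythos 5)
Your proof is correct and follows essentially the same route as the paper's: locate the point of the gap $(b_{i-1},a_i)$ farthest from $S$, show via the non-merging rule that its distance exceeds $3\eps_n$, deduce from the interior-maximum condition that the normal at its projection is orthogonal to $\theta$, and use the outside rolling ball there to bound the gap length from below by $2\sqrt{(\delta^*-\eps_n)(2\alpha-\eps_n-\delta^*)}\geq 4\sqrt{\eps_n\alpha}\,(1+o(1))>3\sqrt{\eps_n\alpha}$. The only differences are organizational (you argue directly with a case split on $\delta^*\geq\alpha$ instead of by contradiction, and you make the Federer differentiability step explicit), and your final packing count, like the paper's own, recovers the stated constant only up to the factor $2$ coming from $\hat{n}_{\eps_n}(\theta,y)=2k$.
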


\begin{proof} Assume by contradiction that for some $i$, $||a_i-b_{i-1}||\leq 3\sqrt{\eps_n\alpha}$.
	By construction, $[b_{i-1},a_i]\subset \hat{S}_{\eps_n}(\X)^c \subset S^c$.
	Because $a_i$ and $b_i$ are on $\partial \hat{S}_{\eps_n}(\X)$, we have
	$d(a_i,\X)=d(b_{i-1},\X)=\eps_n$.
	
	The projection $\pi_{S}:[b_{i-1},a_i]\to \partial S$  is uniquely defined because $\partial S$ has reach at least $\alpha$ and $d(t,\partial S)\leq d(t,a_i)+d(a_i,\partial S) \leq ||a_i-b_{i-1}||+d(a_i,\X)$ for all $t\in(b_{i-1},a_i)$, $||a_i-b_{i-1}||\leq 3\sqrt{\eps_n\alpha}<\alpha/2$ and $d(a_i,\partial S)\leq \eps_n\leq \alpha/2$.
	Moreover, $\pi_S$ is a continuous function.
	
	Hence $\max_{x\in [b_{i-1},a_i]}||x-\pi_{ S}(x)||\geq\eps_n-d_H(S,\X)$, and the maximum is attained at some $x_0\in [b_{i-1},a_i]$.
	First, we show that  $||x_0-\pi_{ S}(x_0)||\geq 3\eps_n$. Indeed, suppose by contradiction that
	for all $t\in (b_{i-1},a_i)$, $d(t,\partial S)\leq 3\eps_n$. Then, $d(t,\X)\leq 4\eps_n$, which contradicts the definition of the points $a_i$ and $b_i$.
	The fact that $||x_0-\pi_{ S}(x_0)||\geq 3\eps_n>d(a_i,S)=d(b_{i-1},S)$  guarantees that $x_0\in (b_{i-1},a_i)$ and that $\eta_0$, the outward unit normal vector to 	$\partial S$ at $\pi_{ S}(x_0)$, is normal to $\theta$.
	
	Let  $z_0=\pi_S(x_0)+ \eta_0\alpha$.
	Observe that $d(a_i,S)\leq \eps_n$  and  $d(b_{i-1},S)\leq \eps_n$.
	From the outside $\alpha$-rolling condition at $\pi_S(x_0)$, $||x_0-\pi_S(x_0)||\leq \alpha$ and using the fact that $\eta_0$ is normal to $\theta$, we have (see Figure \ref{figcat})
	\begin{equation*}
		r_{\theta,y}\cap \mathcal{B}(z_0,\alpha-\eps_n)\subset [b_{i-1},a_i],
	\end{equation*}
	
	\noindent which implies, see Figure \ref{figcat}, that 
	$||a_i-b_{i-1}||\geq 2\sqrt{(\alpha-\eps_n)^2-(\alpha-\ell)^2}$, where $\ell=d(x_0,\pi_S(x_0))$.
	Therefore,
	\begin{equation}\label{minol}
		||a_i-b_{i-1}||\geq 2\sqrt{(\ell-\eps_n)(2\alpha-\ell-\eps_n)}.
	\end{equation}
	
	If we bound $\ell\geq 3\eps_n$ and use the fact that $\ell=o(1)$, which follows from $\ell\leq ||b_{i-1}-a_i||+\eps_n\leq 3\sqrt{\eps_n\alpha}+ \eps_n$, then we get, from \eqref{minol}, 
	\begin{equation*}
		||a_i-b_{i-1}||\geq 2\sqrt{2\eps_n(2\alpha-\ell-\eps_n)}= 2\sqrt{4\eps_n\alpha (1+o(1)))}= 4\sqrt{\alpha \eps_n}(1+o(1)),
	\end{equation*}	
	and for $n$ large enough this contradicts  $||a_i-b_{i-1}||\leq  3\sqrt{\alpha \eps_n}$.
	\begin{figure}[ht]%
		\begin{center}
			\includegraphics[scale=0.4]{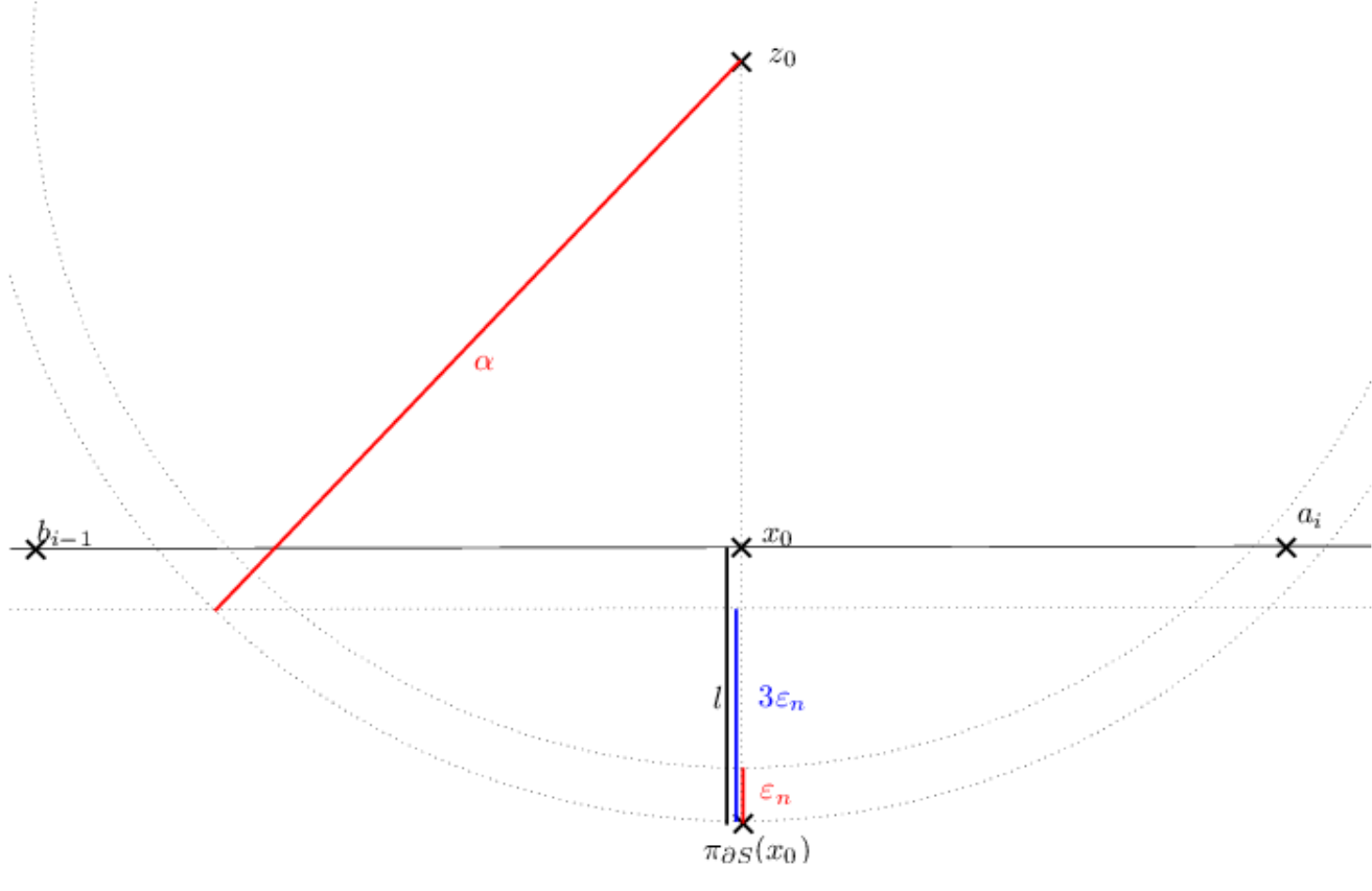} \hspace{3cm} 	
			\caption{$||a_i-b_{i-1}||\geq 2\sqrt{(\alpha-\eps_n)^2-(\alpha-\ell)^2}$, where $\ell=d(x_0,\pi_S(x_0))$.}
			\label{figcat}
		\end{center}
	\end{figure} 
	Then, the number of disjoint intervals $A_i$ is bounded from above by $\text{diam}(S)/(3\sqrt{\eps_n\alpha})$. The proof that for all $i=1,\dots,k$, $||b_{i}-a_{i}||>3\sqrt{\eps_n\alpha}$ follows the same ideas,  we will give a sketch of the proof. 
	Let $b_i>a_i$ (recall that we ordered the points $a_1<b_1<\ldots<a_k<b_k$) be such that $||a_i-b_i||\leq 3\sqrt{\eps_n\alpha}$. Proceeding as before, $\max_{x\in [a_i,b_i]}  ||x-\pi_S(x)||\geq 3\eps_n$ and it 
	is attained at some $x_0\in (a_i,b_i)$. Let $z_0=\pi_S(x_0)-\eta_0\alpha$, with $\eta_0$ being the outward unit normal vector to $\partial S$ at $\pi_S(x_0)$.
	Then $r_{\theta,y}\cap \B(z_0,\alpha)\subset [a_i,b_i]$ since $[a_i,b_i]\notin \B(z_0,\alpha)$ and $\B(z_0,\alpha)\subset S$. From $r_{\theta,y}\cap \B(z_0,\alpha)\subset [a_i,b_i]$ 
	it follows as before that $||a_i-b_i||\geq 4\sqrt{\eps_n \alpha}$, which is a contradiction.
	
	Finally, $\hat{n}_{\eps_n,\X}(\theta,y)\leq \text{diam}(S)/(3\sqrt{\eps_n\alpha})$. 
\end{proof}

\begin{corollary}\label{coraux2}
	Let $S\subset \mathbb{R}^d$ be a compact set
	fulfilling the outside and inside $\alpha$-rolling conditions and with a $(C,\eps_0)$-regular boundary.
	For $n$ large enough so that $3\sqrt{\alpha  \eps_n}<\min(\alpha/2,\eps_0)$, we have
	\begin{equation*}
		\int_{\theta}\int_{y\in \mathcal{A}_{\eps_n}(\theta)} \hat{n}_{\eps_n,\X}(\theta,y)  d\mu_{d-1}(y)d\theta\leq C\frac{\text{diam}(S)}{3\sqrt{\alpha}}\sqrt{\eps_n}.
	\end{equation*}
\end{corollary}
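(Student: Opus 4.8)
The plan is to combine the uniform pointwise bound on $\hat{n}_{\eps_n}$ established in Lemma \ref{lemaux4} with the level‑set estimate for $\mathcal{A}_{\eps_n}(\theta)$ carried out in the proof of Lemma \ref{boundint1}. The guiding heuristic is that $\hat{n}_{\eps_n}$ is at most a constant times $\eps_n^{-1/2}$, while for each $\theta$ the set $\mathcal{A}_{\eps_n}(\theta)$ has $(d-1)$‑dimensional measure only of order $\eps_n$, so the product integrates to a quantity of order $\eps_n^{1/2}$.

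First I would fix $n$ large enough that $3\sqrt{\alpha\eps_n}<\min(\alpha/2,\eps_0)$. Since $\eps_n\to 0$, for such $n$ one also has $\eps_n<3\sqrt{\alpha\eps_n}<\eps_0$ and $\eps_n<\alpha/2$, so the hypotheses of Lemma \ref{lemaux4} hold; that lemma gives, for every $\theta\in(\mathcal{S}^+)^{d-1}$ and every $y\in\theta^\perp$,
\[
\hat{n}_{\eps_n}(\theta,y)\ \le\ \frac{\text{diam}(S)}{3\sqrt{\alpha}}\,\eps_n^{-1/2}.
\]
Bounding $\hat{n}_{\eps_n}(\theta,y)$ by this constant inside the integral over $\mathcal{A}_{\eps_n}(\theta)$ reduces the claim to showing $\int_{\theta}|\mathcal{A}_{\eps_n}(\theta)|_{d-1}\,d\theta\le C\eps_n$.

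For that I would repeat verbatim the slicing argument from the proof of Lemma \ref{boundint1}: parametrising points of $\mathcal{A}_{\eps_n}(\theta)$ by $l=d(y,F_\theta)$ and integrating over level sets,
\[
|\mathcal{A}_{\eps_n}(\theta)|_{d-1}\ \le\ \int_{l=0}^{\eps_n}\big|\{y\in\theta^\perp:\ d(y,F_\theta)=l\}\big|_{d-2}\,dl,
\]
and then invoking the $(C,\eps_0)$‑regularity of $\partial S$ (applicable since $\eps_n<\eps_0$), which via the mean value theorem applied to $\varphi_\theta$ yields $\big|\{y\in\theta^\perp:\ d(y,F_\theta)=l\}\big|_{d-2}\le\sup_{\eps\in(0,\eps_0)}\varphi_\theta'(\eps)\le C$ for every $l\in(0,\eps_0)$. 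Hence $|\mathcal{A}_{\eps_n}(\theta)|_{d-1}\le C\eps_n$ uniformly in $\theta$, and since $d\theta$ has total mass $1$ the same bound survives integration in $\theta$.

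Putting the two ingredients together,
\begin{align*}
\int_{\theta}\int_{y\in\mathcal{A}_{\eps_n}(\theta)}\hat{n}_{\eps_n}(\theta,y)\,d\mu_{d-1}(y)\,d\theta
&\ \le\ \frac{\text{diam}(S)}{3\sqrt{\alpha}}\,\eps_n^{-1/2}\int_{\theta}|\mathcal{A}_{\eps_n}(\theta)|_{d-1}\,d\theta\\
&\ \le\ \frac{\text{diam}(S)}{3\sqrt{\alpha}}\,\eps_n^{-1/2}\cdot C\eps_n\ =\ C\,\frac{\text{diam}(S)}{3\sqrt{\alpha}}\,\sqrt{\eps_n},
\end{align*}
which is exactly the asserted inequality. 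I do not expect a real obstacle: everything substantive — the rolling‑ball control that keeps consecutive intersections (hence $\hat{n}_{\eps_n}$) sparse, and the treatment of $\mathcal{A}_{\eps_n}(\theta)$ via the distance function $d(\cdot,F_\theta)$ together with $(C,\eps_0)$‑regularity — is already contained in Lemmas \ref{lemaux}, \ref{lemaux4} and \ref{boundint1}. The one remark worth making is that, in contrast with Lemma \ref{boundint1}, here one does not even need the sharper $l$‑dependent estimate $n_{\partial S}(\theta,y)\le\text{diam}(S)(\alpha l)^{-1/2}/4$: the crude uniform bound $\hat{n}_{\eps_n}=\mathcal{O}(\eps_n^{-1/2})$ already suffices once it is integrated over a set of $(d-1)$‑measure $\mathcal{O}(\eps_n)$.
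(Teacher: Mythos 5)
Your proof is correct and follows essentially the same route the paper intends: the corollary is stated without a separate proof precisely because it is the immediate combination of the uniform bound $\hat{n}_{\eps_n}(\theta,y)\leq \text{diam}(S)\,\eps_n^{-1/2}/(3\sqrt{\alpha})$ from Lemma \ref{lemaux4} with the measure estimate $|\mathcal{A}_{\eps_n}(\theta)|_{d-1}\leq C\eps_n$ obtained from the $(C,\eps_0)$-regularity exactly as in the proof of Lemma \ref{boundint1}. Your check that $3\sqrt{\alpha\eps_n}<\min(\alpha/2,\eps_0)$ implies the hypotheses of Lemma \ref{lemaux4} (and that $\eps_n<\eps_0$ so the regularity applies) is the right bookkeeping, and the final multiplication gives the stated constant.
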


\subsubsection{Proof of Theorem \ref{mainth}}
Without loss of generality, we can assume that $0\in S$.
Recall that for $\theta \in (\mathcal{S}^+)^{d-1}$,  $\mathcal{A}_{\eps_n}(\theta)$ is the set of all $y\in \theta^{\perp}$ such that  $||y||\leq \text{diam}(S)$ and $r_{\theta,y}$ does not fulfill $L(\eps_n)$.
First, from Lemma \ref{propaux}, we have that $	||\partial S|_{d-1} - \hat{I}_{d-1}(\mathbb{X},\eps)|$ is bounded from above by
\begin{equation*}
	\frac{1}{\beta(d)} \int_{\theta \in {(\mathcal{S}^+)}^{d-1}}\int_{y \in \mathcal{A}_{\eps_n}(\theta)} |\hat{n}_{\eps_n,\X}(\theta,y)-n_{\partial S}(\theta,y)| d\mu_{d-1}(y) d\theta,
\end{equation*}
which is bounded from above by
\begin{multline*}
	\frac{1}{\beta(d)} \int_{\theta \in {(\mathcal{S}^+)}^{d-1}}\int_{y \in \mathcal{A}_{\eps_n}(\theta)} \hat{n}_{\eps_n,\X}(\theta,y) d\mu_{d-1}(y) d\theta+\\ \frac{1}{\beta(d)} \int_{\theta \in {(\mathcal{S}^+)}^{d-1}}\int_{y \in \mathcal{A}_{\eps_n}(\theta)} n_{\partial S}(\theta,y)d\mu_{d-1}(y) d\theta.
\end{multline*}

Now, by Corollary \ref{coraux2} and Lemma \ref{boundint1},  we get that
\begin{equation*}
	||\partial S|_{d-1} - \hat{I}_{d-1}(\mathbb{X},\eps)|\leq  \frac{7C \text{diam}(S)}{3\beta(d)\sqrt{\alpha}} \sqrt{\eps_n},
\end{equation*}
for $n$  large enough.

\subsubsection{Proof of Theorem \ref{mainthbis}}

The proof of  Theorem \ref{mainthbis} is basically the same as the previous one. 
Since $N_0\geq N_S$ Lemma \ref{propaux} ensures that, for all $r_{y,\theta}$ not in $\mathcal{A}_{\eps_n}(\theta)$,
$\min(\hat{n}(\theta,y),N_0)=n_{\partial S}(\theta,y)$, for $n$ large enough that $4\eps_n<\alpha$. Thus, we still have, for $n$ large enough, $	||\partial S|_{d-1} - \hat{I}_{d-1}^{N_0}(\partial S)|$ is bounded from above  
\begin{multline*}
	\frac{1}{\beta(d)} \int_{\theta \in {(\mathcal{S}^+)}^{d-1}}\int_{y \in \mathcal{A}_{\eps_n}(\theta)}
	n_{\partial S}(\theta,y) d\mu_{d-1}(y) d\theta+\\
	\frac{1}{\beta(d)} \int_{\theta \in {(\mathcal{S}^+)}^{d-1}}\int_{y \in \mathcal{A}_{\eps_n}(\theta)} N_0 d\mu_{d-1}(y) d\theta.
\end{multline*}

Now, by applying \eqref{boundint1eq2} for the first part and a similar calculation for the second part, we get that
\begin{equation} \label{cota}
	||\partial S|_{d-1} - \hat{I}_{d-1}^{N_0}(\partial S)|\leq  \frac{4C(N_S+N_0)}{\beta(d)}\eps_n,
\end{equation} 
for $n$  large enough.

\subsubsection{Proof of Corollary \ref{maintdh2bis}}
By Corollary 1 in \cite{ch:212}, we know that, with probability one, 
for $T$ large enough, $d_H(\mathbb{X}_T,S)\leq \eps_T\to 0$, where $\eps_T=o((\ln(T)^2/T)^{1/d})$.
Let $\X=\{X_{t_1},\dots,X_{t_n}\}$ be a discretization of $\mathbb{X}_T$ such that $t_i-t_{i-1}=T/n$ and $t_n=T$.
Put $\eps_n=d_H(\X,S)$, then $\eps_n\geq \eps_T$.
It is clear that, for a fixed $T$, $\eps_n$  decreases to $\eps_T$ as $n\to \infty$.
To emphasize the dependence on the set, we will write $\hat{I}_{d-1}^{N_0}(\partial S,\X)$ for the estimator  based on $\X$, and $\hat{I}_{d-1}^{N_0}(\partial S,\mathbb{X}_T)$ for the estimator based on $\mathbb{X}_T$ (both defined using Definition \ref{hatn}).
Then, by \eqref{cota}, to prove Corollary \ref{maintdh2bis} it is enough to prove $\hat{I}_{d-1}^{N_0}(\partial S,\X)\to \hat{I}_{d-1}^{N_0}(\partial S,\mathbb{X}_T)$ as $n\to \infty$, for arbitrary fixed $T$.
Fix $\theta$ and $y$. It is clear that $\hat{n}(\theta,y)(\partial S,\X)\to \hat{n}(\theta,y)(\partial S,\mathbb{X}_T)$ as $n\to \infty$,  and so Corollary \ref{maintdh2bis} follows by the dominated convergence theorem, using the fact that $\min\{\hat{n}(\theta,y),N_0\}\leq N_0$.

\subsection{Proofs for the estimator based on the $\alpha$-hull}

Theorem \ref{mainth2} will be easily obtained from the two following geometric lemmas and Theorem $3$ in \cite{rhull}.

Here, we need to 
introduce some new notation. 
If $f$ is a function, then $\nabla_f(x)$ denotes its gradient and $\mathcal{H}_f$ its Hessian matrix. Given two sets $C,D\subset \mathbb{R}^d$, we write $C\approx D$ if there exists an homeomorphism between $C$ and $D$.
In what follows, $M\subset \mathbb{R}^d$ will be a compact set, and $\mathcal{C}^2$ a $(d-1)$-dimensional manifold (with or without boundary).
Then for all $x$ in $M$, there exists an $r_x>0$ such that either
\begin{enumerate}
	\item[$i)$] for all $r\leq r_x$, $\mathring{\mathcal{B}}(x,r)\cap  M\approx \mathring{\mathcal{B}}_{d-1}(0,1)$, or
	\item[$ii)$] for all $r\leq r_x$, $\mathring{\mathcal{B}}(x,r)\cap M\approx \mathring{\mathcal{B}}_{d-1}(0,1)\cap \{(x_1,\ldots,x_{d-1}):x_1\geq 0\}$.
\end{enumerate}

The set of points satisfying condition $i)$ constitute $\text{int}(M)$, while the set of points satisfying $ii)$ constitute
$\partial M$. We have that $\partial M$ is a $(d-2)$-dimensional manifold without boundary and, as a consequence, $|\partial M|_{d-1}=0$.  

Given a point $x\in M$, $N_x M=\{v\in \mathbb{R}^d:\langle v, u\rangle =0, \forall u\in T_xM\}$ is the $1$-dimensional orthogonal subspace.
If $M$ is a manifold as before, and $\partial M=\emptyset$, we define for any compact set  $E\subset M$ ($E$ is not necessarily a manifold)  its interior 
$\text{int}(E)=\{x\in E: \exists r_x \text{ such that for all }r\leq r_x, \mathring{\mathcal{B}}(x,r)\cap E \approx \mathring{\mathcal{B}}_{d-1}(0,1)\}$.
We have $\text{int}(E)$ is a manifold (without boundary and, when is not empty $\text{int}(E)$ has the same dimension as $M$).

\begin{lemma}\label{rconvhullgeo} 
	Let  $S\subset\mathbb{R}^d$ be a compact set fulfilling the inside and outside $\alpha$-rolling conditions.
	Let $\alpha'<\alpha$ be a positive constant. Let $\X=\{X_1,\ldots,X_n\}\subset S$ be such that :
	
	\begin{enumerate}
		\item[i.] $d_H(\partial C_{\alpha'}(\X),\partial S)\leq \eps_n$ with 
		$\eps_n< \frac{\alpha \alpha'}{2(\alpha+\alpha')}$ (notice that we then have  $\eps_n\leq \alpha'/2$ and $\eps_n\leq \alpha/4$).
		
		\item[ii.] $d_H(\X,S)<\frac{1}{3}\frac{\alpha \alpha'}{\alpha+\alpha'}$ note that  $\frac{1}{3}\frac{\alpha \alpha'}{\alpha+\alpha'}\leq\frac{\alpha'}{3}$
	\end{enumerate}

	Then,
	\begin{enumerate}
		\item there exist $C_1(\X),\ldots,C_K(\X)$ such that:
		\begin{enumerate}
			\item $\bigcup_{i=1}^K C_i(\X)\subset  \partial C_{\alpha'}(\X)$
			\item $|  \partial C_{\alpha'}(\X) \setminus (\bigcup_{i=1}^K C_i(\X))|_{d-1}=0$
			\item  $C_i(\X)$ is a $\mathcal{C}^2$ $(d-1)$-dimensional manifold 
			\item $C_i(\X)\cap C_j(\X)=\emptyset$ when $i\neq j$
		\end{enumerate}
		for all  $x\in \bigcup_{i=1}^K C_i(\X)$,  there exists a $\hat{\eta}_x$, the unit normal
		(to $\partial C_{\alpha'}(\X)$),
		a vector pointing outward (with respect to $C_{\alpha'}(\X)$)  from $x$  that satisfies
		$$\langle \hat{\eta}_x, \eta_{\pi_{\partial S}(x)} \rangle \geq 1-\frac{2(\alpha+\alpha')}{\alpha \alpha'}\eps_n.$$
		\item $\pi_{\partial S}:\partial C_{\alpha'}(\X) \rightarrow \partial S$ the orthogonal projection onto $\partial S$ is one to one.
		\item $\partial C_{\alpha'}(\X)\approx \partial S$
	\end{enumerate}
	
\end{lemma}
\begin{proof}
	Let us prove first that   there are no isolated points in $\partial C_{\alpha'}(\X)$. Indeed, suppose by contradiction that there exists $x$ is an isolated point of 
	$\partial C_{\alpha'}(\X)$; that is, there exists $r>0$ such that $B(x,r)\cap \partial C_{\alpha'}(\X)=\{x\}$.
	By connectedness of $B(x,r)\setminus\{x\}$ we have either $B(x,r)\setminus\{x\}\subset  C_{\alpha'}(\X)^c $
	or  $B(x,r)\setminus\{x\}\subset  C_{\alpha'}(\X)$.  
	The second case   contradicts $x\in \partial C_{\alpha'}(\X)$
	because $C_{\alpha'}(\X)$ is a close set.  
	Thus, we have $B(x,r)\setminus\{x\} \subset C_{\alpha'}(\X)^c$.
	Let us introduce $x^*=\pi_{\partial S}(x)$, then $||x-x^*||\leq \eps_n$. Let us denote $\eta^*=\eta_{x^*}$. 
	Since $\partial C_{\alpha'}(\X)\subset S$, $x\in S$, then by definition of $x^*$, $x+||x-x^*||\eta^*=x^*$. Let us introduce $O=x^*-\alpha \eta^*$. From the inner rolling ball property, $B(O,\alpha) \subset S$.
	Let us define $y=x-\min(r,\eps_n)\eta^*$. From $y\in C_{\alpha'}(\X)^c$ it follows that there exists $O_y$ such that $||O_y-y||<\alpha'$ and
	$B(O_y,\alpha')\cap \X=\emptyset$. From $d_H(\X,S)<\alpha'$ we have $||O_y-O||>\alpha$, and thus $[O,O_y] \cap \partial B(O,\alpha)\neq \emptyset$. 
	Let us define $z=[O,O_y] \cap \partial B(O,\alpha)$, then  $z\in S$ and $B(z,(\alpha'+\alpha-||O_y-O||))\cap \X=\emptyset$. 
	We will prove that $\alpha'+\alpha-||O_y-O||\geq d_H(\X,S)$, which is a contradiction. \\
	
	Because $x\in C_{\alpha'}(\X)$ we have $||O_y-x||\geq \alpha'$.
	Let us write $O_y=y+a\eta^*+bw$ with $||w||=1$ and $w\in (\eta^*)^{\perp}$, by $||O_y-x||\geq \alpha'$ and $||O_y-y||<\alpha'$ it quickly comes that $a^2+b^2\leq (\alpha')^2$ and
	$a\leq \frac{\min(r,\eps_n)}{2}$.
	
	Now 
	\begin{align*}
		||O_y-O||^2&=(\alpha-\min(r,\eps_n)-||x-x^*||+a)^2+b^2\\
		&\hspace{-1cm}\leq(\alpha-\min(r,\eps_n)-||x-x^*||)^2+2a(\alpha-\min(r,\eps_n)-||x-x^*||)+(\alpha')^2\\
		&\hspace{-1cm} \leq (\alpha+\alpha'-\min(r,\eps_n)-||x-x^*||)^2-2(\alpha'-a)(\alpha-\min(r,\eps_n)-||x-x^*||)
	\end{align*}
	Thus,   $||O_y-O||$ is bounded from above by
	\begin{align*}
		&\hspace{-.5cm}\leq(\alpha+\alpha'-\min(r,\eps_n)-||x-x^*||)\sqrt{1-2\frac{(\alpha'-a)(\alpha-\min(r,\eps_n)-||x-x^*||)}{(\alpha+\alpha'-\min(r,\eps_n)-||x-x^*||)^2}}\\
		&\hspace{-.5cm}\leq \alpha+\alpha'-\min(r,\eps_n)-||x-x^*||-\frac{(\alpha'-a)(\alpha-\min(r,\eps_n)-||x-x^*||)}{(\alpha+\alpha'-\min(r,\eps_n)-||x-x^*||)}\\
	\end{align*}
	and
	\begin{align*}
		\alpha+\alpha'-||O_y-O||&\geq (\alpha'-a)\frac{\alpha-\min(r,\eps_n)-||x-x^*||}{\alpha+\alpha'-\min(r,\eps_n)-||x-x^*||}\\
		\alpha+\alpha'-||O_y-O||&\geq \frac{(\alpha'-\eps_n/2)(\alpha-2\eps_n)}{\alpha+\alpha'}\geq \frac{\alpha\alpha'}{\alpha+\alpha'}\frac{3}{8}> \frac{1}{3}\frac{\alpha\alpha'}{\alpha+\alpha'}>d_H(\X,S)\\
	\end{align*}
	
	As announced, this leads to a contradiction.

	From \eqref{rhull} it follows  that, for some $N$, 
	$$\partial C_{\alpha'}(\X)=\bigcup_{i=1}^{N} \Big(\partial B_i \setminus  \bigcup_{j=1}^N B_j\Big).$$
	Here, the $B_i$ are balls of radius $r_i$ larger than $\alpha'$ or half-spaces (by abuse of notation, if $B_i$ is an half-space we will put $r_i=+\infty$).
	
	Our first step consists in proving that:
	\begin{enumerate}
		\item If $x\in \partial C_{\alpha'}(\X)\setminus \X$, then for all $i$ such that $x\in \partial B_i \setminus  \bigcup_{j=1}^N B_j$, we have 
		$r_i=\alpha'$.  
		\item If $x\in \partial C_{\alpha'}(\X)\cap \X$, then there exists an $i$ such that $x\in \partial B_i \setminus  \bigcup_{j=1}^N B_j$ 
		$r_i=\alpha'$.
	\end{enumerate}
	
	We define $S_i=\partial B_i \setminus  \bigcup_{j=1}^N B_j$.
	
	Suppose that $x\in \partial C_{\alpha'}(\X)\setminus \X$. Consider first the case $x\in \partial B_i=\mathcal{S}(O_i,r_i)$
	with $r_i\geq \alpha'$. If $r_i> \alpha'$, then be introducing $\Omega_i=x+(\alpha'/r_i)(O_i-x)=O_i+(r_i-\alpha')(x-O_i)/r_i$ we have that
	$\mathcal{B}(\Omega_i,\alpha')\cap \X \subset (\mathring{\mathcal{B}}(O_i,r_i)\cup\{x\})\cap \X=\emptyset$. Hence, $d(\Omega_i,\X)>\alpha'$,
	and by continuity, there exists a $t>0$ so small that $d(\Omega_i+(t/r_i)(x-O_i),\X)>\alpha'$,
	that is, $\mathcal{B}(\Omega_i+(t/r_i)(x-O_i),\alpha')\subset C_{\alpha'}(\X)^c$ and so $x\in \mathring{C_{\alpha'}(\X)^c}$.
	This is impossible. To conclude this first step, 
	if $x\in \partial C_{\alpha'}(\X)\setminus \X$ with $x\in \partial B_i=\mathcal{S}(O_i,r_i)$, then $r_i=\alpha'$.
	
	Second, consider the case $x\in B_i$ with $B_i=\{z,\langle z-y_i,u_i\rangle >0\}$ where $u_i$ is a unit vector. 
	We can conclude, similarly, on introducing $\Omega_i=x+\alpha'u_i$, that $\mathcal{B}(\Omega_i,\alpha')\cap \X=\emptyset$ 
	and $\mathcal{B}(\Omega_i-tu_i,\alpha')\subset  C_{\alpha'}(\X)^c$ (for some positive but small enough $t$) and so $x\in \mathring{C_{\alpha'}(\X)^c}$.

	If $x\in \partial C_{\alpha'}(\X)\cap \X$, then by   the preliminary result, there exists a sequence 
	$(x_k)$ in $\partial C_{\alpha'}(\X)\setminus \{x\}$ with
	$x_k\rightarrow x$. Because $\X$ is finite, it follows that for $k$ large enough, $x_k\in \partial C_{\alpha'}(\X)\cap \X^c$.
	Because the number of possible $S_i$ is finite, we can extract from $(x_k)$ a sequence $(x'_k)$ such that 
	there exists a $S_i=\partial B_i$ such that for all $k$, $x'_k\in S_i$ making $k\rightarrow +\infty$, and then we have $x\in S_i$.\\

	Our second step consists in proving that if there exists an $x\in \partial B_i \setminus (\bigcup_{j} B_j)$, then 
	\begin{equation}\label{boundps}
		\langle \hat{\eta}_{x,i}, \eta_{\pi_{\partial S}(x)} \rangle \geq 1-\frac{2(\alpha+\alpha')}{\alpha \alpha'}\eps_n,
	\end{equation}
	where $\hat{\eta}_{x,i}=\frac{O_i-x}{\alpha'}$ and $x^*=\pi_{\partial S}(x)$. Observe that from the first step we know that $B_i=\mathring{\mathcal{B}}(O_i,\alpha')$.
	Write $\eta_{x^*}$ for the outward (from $S$) unit normal vector of $\partial S$ at $x^*$ and $O^*=x^*-\alpha \eta_{x^*}$.	
	
	Note first that
	\begin{equation}\label{th2inclu1}
		\mathring{\mathcal{B}}(O_i,\alpha')\subset C_{\alpha'}(\X)^c \text{ and }\mathcal{B}(O^*,\alpha)\subset S.
	\end{equation}
	Introduce $y^*=[O^*,O_i]\cap \partial \mathcal{B}(O_i,\alpha')$ and $y=[O^*,O_i]\cap \partial \mathcal{B}(O^*,\alpha)$ (see Figure
	\ref{thelast}).
	Then, from the  second inclusion in \eqref{th2inclu1}, we get $y\in S$, and from the first inclusion in \eqref{th2inclu1} 
	we get $d(y, C_{\alpha'}(\X))\geq ||y-y^{*}||$. Then,  $ ||y-y^{*}||\leq \eps_n$, which in turn implies 
	\begin{equation}\label{difOO*}
		\alpha+\alpha'-||O_i-O^*||\leq \eps_n.
	\end{equation}
	
	\begin{figure}[h]
		\begin{center}
			\includegraphics[scale=2]{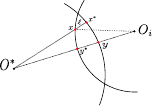}
		\end{center}
		\caption{$x\in \partial C_{\alpha'}(\X)$,   $x^{*}=\pi_{\partial S}(x)$, $O_i=x+\alpha'\hat{\eta}_{x,i}$ and $O^*=x^*-\alpha \eta_{x^*}$.  Observe that $\mathring{B}(O_i,\alpha')\cap B(O^*,\alpha)\neq \emptyset$ by \eqref{inters}}.
		\label{thelast}
	\end{figure}
	From  $x^*=\pi_{\partial S}(x)$  we get that $x^*=x+\ell\eta_{x^*}$ where $\ell=||x-x^*||\leq \eps_n$.
	Then, $O_i=O^*+(\alpha-\ell)\eta_{x^*}+\alpha'\hat{\eta}_{x,i}$ and
	\begin{align}\label{inters}
		\alpha+\alpha'-||O_i-O^*||&=\alpha+\alpha'-\sqrt{(\alpha')^2+(\alpha-\ell)^2+2\alpha'(\alpha-\ell)\langle\hat{\eta}_{x,i},\eta_{x^*}\rangle} \nonumber\\
		&\hspace{-1cm}=\alpha+\alpha'-\sqrt{(\alpha'+\alpha-\ell)^2-2\alpha'(\alpha-\ell)(1-\langle\hat{\eta}_{x,i},\eta_{x^*}\rangle)} \nonumber\\
		&\hspace{-1cm}\geq \ell+\frac{\alpha'(\alpha-\ell)(1-\langle\hat{\eta}_{x,i},\eta_{x^*}\rangle)}{\alpha + \alpha' -\ell}\geq \frac{\alpha'\alpha(1-\langle\hat{\eta}_{x,i},\eta_{x^*}\rangle)}{2(\alpha + \alpha')},
	\end{align}
	where in the first inequality of the last line we bounded $A\sqrt{1-2B/A^2}\leq A(1-B/A^2)=A-B/A$, 
	and in the last inequality $\alpha-\ell\geq \alpha/2$, thus, combined with Equation \eqref{difOO*}, we  can conclude
	the proof of Equation \eqref{boundps}.\\

	As the third step, we will now conclude the proof of assertion $1$.
	Note that if $B_i$ is a ball (and not an half-space), then
	$\partial B_i\cap B^c_j=\partial B_i\cap P_{i,j}$ where $P_{i,j}$ the following closed half space. 
	$$P_{i,j}=\begin{cases} B^c_j \quad \text{ if }B_j\text{ is an open half space}\\
		\{x:||x-O_i||^2-r_i^2\leq ||x-O_j||^2-r_j^2  \}\quad \text{ if }B_j=\mathring{B}(O_j,r_j).\end{cases}
	$$
	Thus,
	$S_i=\partial B_i \bigcap \left( \bigcup_j B_j \right)^c=\partial B_i \bigcap \left( \bigcap_{j\neq i} B_j^c \right)=\partial B_i \cap H_i$, where $H_i$ is a convex polygon.

	Put $C_i(\X)=(\partial B_i \cap \mathring{H}_i)\setminus \X$. We are going to prove that $C_i(\X)$ satisfies conditions $(a)$, $(b)$, $(c)$ and $(d)$ of assertion $1$.
	First note that $(a)$ is obvious by construction.

	Suppose $x\in \partial C_{\alpha'}(\X)\setminus \X$. By the first step, we know that there exists a $B_{i_0}$ which is a ball of radius 
	$\alpha'$ such that $x\in S_{i_0}$ and thus we are in the situation where $x\in \partial B_{i_0} \cap H_{i_0}$
	with $ H_{i_0}$ a convex polygon. If now $x\in \partial C_{\alpha'}(\X)\setminus \X$ but
	$x\notin \cup C_i(\X)$, then we must have $x\in \partial B_{i_0} \cap \partial H_{i_0}$.
	This gives
	$$\partial C_{\alpha'}(\X) \setminus \big(\bigcup_i C_i(\X) \big)\subset \X \bigcup \big(\bigcup_{i,r_i=\alpha'} \partial B_i \cap \partial H_i \big)$$ 
	and thus $|\partial C_{\alpha'}(\X) \setminus \big(\bigcup_i C_i(\X) \big)|_{d-1}=0$, which proves (b).

	We will now prove that if $i\neq j$ and $B_i$ and $B_j$ are two balls, then  $(\partial B_i \cap \mathring{H}_i)\cap (\partial B_j \cap H_j)=\emptyset$. Suppose by contradiction that  
	$(S_i\cap \mathring{H}_i)\cap (S_j\cap H_j) \neq \emptyset$, then $||x-O_i||^2-r_i^2< ||x-O_j||^2-r_j^2$ and 
	$||x-O_i||^2-r_i^2\geq ||x-O_j||^2-r_j^2$, which is a contradiction. Thus, if
	$C_i(\X)$ and $C_j(\X)$ are both non-empty, then we have that $B_i$ and $B_j$ are two balls, and if $i\neq j$, 
	$C_i(\X)\cap C_j(\X)=\emptyset$, which proves $(d)$.

	This also proves that if $x\in C_i(\X)$, then there exists an $r_x>0$ small enough so that 
	$\partial C_{\alpha'}(\X)\cap \mathcal{B}(x,r_x)=\partial B_i \cap \mathcal{B}(x,r_x)$.
	Thus, $\partial C_{\alpha'}(\X)\cap \mathcal{B}(x,r_x)$ is a $\mathcal{C}^2$, $(d-1)$-dimensional manifold. Moreover, the tangent space at $x$ is given by $(x-O_i)^\perp$. Also, the unit normal (to $\partial C_{\alpha'}(\X)$) vector $(O_i-x)/||x-O_i||$ is well defined, and points outwards to $C_{\alpha'}(\X)$. 
	This concludes the proof of $(c)$ and also the proof of 1).
	
	The proof of $2)$ follows the same ideas used to prove Theorem $3$ in \cite{aa:16}.
	We are going to give the main steps of the proof (adapted to our case).
	
	We first prove the subjectivity. For any $x^*\in \partial S$, we introduce $O^*=x^*-\alpha \eta_{x^*}$ and
	$x=x^*-2\eps_n\eta_{x^*}$. From the inside and outside $\alpha$-rolling conditions it follows that $S$ has reach $\alpha>0$, and so $\pi_{\partial S}([x,x^*])=x^*$, where we used that $2\eps_n<\alpha$. To prove that $x\in C_{\alpha'}(\X)$
	we proceed by contradiction. If $x\notin C_{\alpha'}(\X)$, then there exists an $O$ with $||O-x||\leq \alpha'$ and
	$\mathring{\mathcal{B}}(O,\alpha')\subset  C_{\alpha'}(\X)^c$.

	Let $u=(O-x)/||O-x||$,
	$y=[O^*,O]\cap \partial \mathcal{B}(O^*,\alpha)$ and
	$y^*=[O^*,O]\cap \partial \mathcal{B}(O,\alpha')$,
	and therefore  $ ||y-y^{*}||\leq \eps_n$,  which implies 
	\begin{equation}\label{difOO*2}
		\alpha+\alpha'-||O-O^*||\leq \eps_n.
	\end{equation}
	But now
	\begin{align*}
		\alpha+\alpha'-||O-O^*||&= \alpha+\alpha'+||(\alpha-2\eps_n)\eta_{x^*}+u||O-x|| ||\\
		&\hspace{-1.75cm}=\alpha+\alpha'-\sqrt{||O-x||^2+(\alpha-2\eps_n)^2+2||O-x||(\alpha-2\eps_n)\langle u,\eta_{x^*}\rangle}\\
		&\hspace{-1.75cm}=\alpha+\alpha'-\sqrt{(||O-x||+\alpha-2\eps_n)^2-2||O-x||(\alpha-2\eps_n)(1-\langle u,\eta_{x^*}\rangle)}\\
		&\hspace{-1.75cm}=\alpha+\alpha'-(||O-x||+\alpha-2\eps_n)\sqrt{1-\frac{2||O-x||(\alpha-2\eps_n)(1-\langle u,\eta_{x^*}\rangle)}{(||O-x||+\alpha-2\eps_n)^2}}\\
		&\hspace{-1.75cm}\geq\alpha+\alpha'-(||O-x||+\alpha-2\eps_n)\left(1-\frac{||O-x||(\alpha-2\eps_n)(1-\langle u,\eta_{x^*}\rangle)}{(||O-x||+\alpha-2\eps_n)^2}\right)
		\\			
		&\hspace{-1.75cm}\geq 2\eps_n+\alpha'-||O-x||+\frac{||O-x||(\alpha-2\eps_n)(1-\langle u,\eta_{x^*}\rangle)}{(||O-x||+\alpha-2\eps_n)} \geq 2\eps_n,
	\end{align*}

	\noindent where the last inequality follows from $||O-x||\leq \alpha'$ and $2\eps_n<\alpha$. This contradicts Equation \eqref{difOO*2}. Thus,  $x\in C_{\alpha'}(\X)$. From the outside and inside $\alpha$ rolling condition  (which implies $\alpha$-convexity, see \cite{cue12}) it follows that,
	$$C_{\alpha'}(\X)\subset C_{\alpha}(\X)\subset C_{\alpha}(S)=S.$$
	Then,  if $x^*\in \partial S$ then $x^*\in \partial C_{\alpha'}(\X)$ or 
	$x^*\in C_{\alpha'}(\X)^c$. In both cases, there exists  a $z\in (x,x^*)$ and $z\in \partial C_{\alpha'}(\X)$,  such that $\pi_{\partial S}(z)=x^*$.
	
	We now prove the injectivity. Suppose by contradiction that there are $x_1,x_2$ $\in \partial C_{\alpha'}(\X)$ such that $\pi_{\partial S}(x_1)=\pi_{\partial_S}(x_2)=y$. Write $\ell_i=d(x_i,\partial S)$, for $i=1,2$. Because $ C_{\alpha'}(\X)\subset S$, we have 
	$x_i+\ell_i \eta_{y}=y$ and thus
	$x_1=x_2+(\ell_2-\ell_1)\eta_y$ and $|\ell_2-\ell_1|\leq\eps_n$ (because for $i\in\{1,2\}$, 
	$\ell_i\geq 0$ and $\ell_i\leq \eps_n$, due to $d_H(\partial S,\partial C_{\alpha'}(\X))\leq \eps_n$).
	Suppose that $\ell_2\geq \ell_1$.
	From the first step together with Equation \eqref{boundps}, we know that there exists an $O_i$ such
	that $\mathring{\mathcal{B}}(O_i,\alpha')\subset C_{\alpha'}(\X)^c$, $||x_2-O_i||=\alpha'$ and  $\langle u, \eta_{y} \rangle \geq 1-2(\alpha+\alpha')/(\alpha \alpha')\eps_n$ with $u=(O_i-x_2)/\alpha'$. Then 
	\begin{align*}
		||x_1-O_i||^2=&\ (\ell_2-\ell_1)^2+\alpha'^2-2\alpha' (\ell_2-\ell_1) \langle u,\eta_y\rangle \\
		\leq \ &(\ell_2-\ell_1)^2+\alpha'^2-2\alpha' (\ell_2-\ell_1) + \frac{4(\alpha+\alpha')}{\alpha }(\ell_2-\ell_1) \eps_n\\
		\leq \  & \alpha'^2-(\ell_2-\ell_1)\left(2\alpha'-\frac{4(\alpha+\alpha')}{\alpha }\eps_n -(\ell_2-\ell_1) \right)\\
		\leq \ & \alpha'^2-(\ell_2-\ell_1)\left(2\alpha'-\left(\frac{4(\alpha+\alpha')}{\alpha }+1\right)\eps_n \right).
	\end{align*}
	The condition $\eps_n\leq \frac{\alpha \alpha'}{4(\alpha+\alpha')}$ guarantees  $2\alpha'-(4(\alpha+\alpha')/\alpha+1)\eps_n > 0$ thus, if $\ell_2>\ell_1$, then $x_1\in \mathring{\mathcal{B}}(O_i,\alpha')$, 
	which is impossible (recall that $\mathring{B}(O_i,\alpha')\subset C_{\alpha'}^c$ and 
	that $x_1\in C_{\alpha'}(\X) $).
	Thus, by contradiction, $\ell_1=\ell_2$ and $x_1=x_2$, which concludes the proof of injectivity.\\
	
	Finally, we  prove $3.$ Since $reach(\partial S)\geq \alpha$ and $d_H(\partial C_{\alpha'}(\X),\partial S)\leq \eps_n <\alpha$,  $\pi_{\partial S}$, restricted to $\partial C_{\alpha'}(\X)$, is continuous (see \cite{fed:56}).
	The continuity of  $\pi_{\partial S}^{-1}: \partial S \rightarrow \partial C_{\alpha'}(\X)$ follows from the same ideas used to prove the injectivity of $\pi_{\partial S}$: we provide a sketch of the proof.
	It follows from $reach(\partial S)\geq \alpha$ that $\pi_{\partial S}^{-1}(x)=x-\ell(x)\eta_x$ with $\ell(x)\geq 0$.
	In addition, $x\mapsto \eta_x$ is a continuous function (see Theorem 1 in \cite{wal99}).
	It remains to be proved that $\ell$ is a continuous function.
	If this is not the case, then we can find  sequences $(y_n)\subset \partial S$ and $(y'_n)\subset \partial S$, both converging to some $y\in \partial S$), such that $\ell(y_n)\rightarrow \ell_1$ and $\ell(y'_n)\rightarrow \ell_2$.
	We can conclude exactly as in the proof of injectivity that we can take $x_{1,n}=y_n-\ell(y_n)\eta_{y_n}$ and $x_{2,n}=y'_n-\ell(y'_n)\eta_{y'_n}$ making $n\rightarrow + \infty$.
	We thus have $\partial S \approx \partial C_{\alpha'}(\X)$, which proves assertion 3, and thus concludes the proof of the lemma.
	
\end{proof} 

\begin{lemma}\label{NewGeometrie}
	Suppose that $M$ is a $\mathcal{C}^2$, bounded $(d-1)$-dimensional manifold with positive reach $\alpha$. Let 
	$\pi_M$ denote the projection onto $M$ and $\hat{M}$ be a $\mathcal{C}^2$, $(d-1)$-dimensional manifold such that
	\begin{enumerate}
		\item $\pi_M$ is one to one from $\hat{M}$ to $M$,
		\item for all $x\in \hat{M}$ we have $||x-\pi_{M}(x)||\leq \eps_1$ and $\langle \hat{\eta}_x, \eta_{\pi_M(x)}\rangle \geq 1-\eps_2$.
	\end{enumerate}
	Then, if $\eps_1(d-1)\alpha \leq 1$ and $\eps_2\leq 1/8$, we have
	\begin{equation}\label{newencadre}
		\left(1- 3\eps_1 \alpha- 32\eps_2\right)^{\frac{d-1}{2}}\leq \frac{|\hat{M}|_{d-1}}{|M|_{d-1}}\leq \left(1+ 3\eps_1 \alpha+ 32\eps_2 \right)^{\frac{d-1}{2}}.
	\end{equation}
\end{lemma}
\begin{proof}
	Let $p\in M$ and denote by $(e_1,\ldots,e_{d-1})$ an orthonormal basis of $T_pM$ and
	complete it with $e_d$ a unit vector of $N_pM$.
	A neighbourhood of $p$ in $M$ can be parametrized by $\varphi(x)=x+f(x)e_d=\sum_1^{d-1} x_ie_i +f(x_1,\ldots, x_{d-1})e_d$
	where $x=\sum_1^{d-1}x_i e_i$  belongs to a neighborhood of $p$  and $\nabla_f(p)=0$,  see for instance Proposition 3, point 1, in \cite{aach20}.

	Consider now the surface element (of $M$) $ds(p)=dx_1\ldots dx_{d-1}$. Its image by $\pi_{M}^{-1}$ on the surface element (of $\hat{M}$) is given by
	$$d\hat{s}(p)=\sqrt{\det(J_{\pi_{M}^{-1}}(p)'J_{\pi_{M}^{-1}}(p))}dx_1\ldots dx_{d-1}.$$
	The rest of the  the proof consist in giving bounds for 
	$\det(J_{\pi_{M}^{-1}}(p)'J_{\pi_{M}^{-1}}(p))$.  
	We have that $\pi_{M}^{-1}(\varphi(x))=x+\ell(x) n(x)$ where $n(x)=(-\partial f/\partial x_1,\ldots, -\partial f/\partial x_{d-1},1)\in N_xM$, which gives
	that 
	$$J_{\pi_{M}^{-1}}(p)=
	\begin{pmatrix}
		I_{d-1}-\ell(p)\mathcal{H}_{f}(p)\\
		\nabla_{\ell}(p)
	\end{pmatrix}.
	$$	
	The reach condition gives that $||\mathcal{H}_{f}(p)||_{op}\leq \alpha$ (see Proposition 6.1 in \cite{smale}) and 
	$\ell(p)=||\pi_{M}^{-1}(p)-p||\leq \eps_1$ so that we just have to bound $||\nabla_{\ell}(p)||$.
	Note that, for $j=1,\ldots,d-1$, we have
	$$t_j=e_j+\frac{\partial \ell}{\partial x_j}(p)e_d-\ell(p)\left(\sum_{1}^{d-1} \frac{\partial ^2 f}{\partial x_i \partial x_j}
	e_i\right)\in T_{\pi_M^{-1}(p)}\hat{M}.$$
	
	Note that $\eta_{p}=\pm e_d$ and introduce $\hat{\eta}_{\pi_M^{-1}(p)}$. Since $t_1,\ldots,t_{d-1}, \hat{\eta}_{\pi_M^{-1}(p)}$ is an 
	orthogonal basis of $\mathbb{R}^d$, we have that
	$$e_d=\sum_{i=1}^{d-1}\langle e_d, \frac{t_i}{||t_i||}\rangle \frac{t_i}{||t_i||}+\langle e_d,\hat{\eta}_{\pi_M^{-1}(p)} \rangle \hat{\eta}_{\pi_M^{-1}(p)},$$
	which implies
	$$1=\sum_{i=1}^{d-1}\langle \eta_{p}, \frac{t_i}{||t_i||}\rangle ^2+\langle \eta_{p},\hat{\eta}_{\pi_M^{-1}(p)} \rangle ^2. $$

	Thus, by condition $2$, we have $|\langle t_j, e_d \rangle |=|\langle t_j, \eta_p\rangle |\leq \sqrt{2\eps_2} ||t_j||$, 
	which implies
	$$\left|\frac{\partial \ell}{\partial x_j}(p)\right|\leq \sqrt{2 \eps_2} ||t_j|| \leq \sqrt{2 \eps_2} \left(1+\left|\frac{\partial \ell}{\partial x_j}(p)\right|+\eps_1(d-1)\alpha \right).$$
	
	From this, we  get
	$$\left|\frac{\partial \ell}{\partial x_j}(p)\right|\leq \frac{\sqrt{2 \eps_2}(1+\eps_1(d-1)\alpha)}{1-\sqrt{2 \eps_2}}.$$
	So, $J_{\pi_{M}^{-1}}(p)'J_{\pi_{M}^{-1}}(p)=I_{d-1}+E$ with $E$ a symmetric matrix with 
	$$||E||_{op}\leq 2\eps_1 \alpha +\eps_1^2 \alpha^2+ \left(\frac{\sqrt{2 \eps_2}(1+\eps_1(d-1)\alpha)}{1-\sqrt{2 \eps_2}} \right)^2,$$
	thus we finally obtain the inequality 
	$$\left(1- 3\eps_1 \alpha- 32\eps_2\right)^{d-1}\leq \det\left(J_{\pi_{M}^{-1}}(p)'J_{\pi_{M}^{-1}}(p)\right)\leq \left(1+ 3\eps_1 \alpha+ 32\eps_2 \right)^{d-1},$$
	which concludes the proof. \end{proof}

\subsubsection{Proof of Theorem \ref{mainth2}}
Theorem \ref{mainth2} follows now from the previous lemmas. 

Let $\eps_n=d_H(\partial C_{\alpha'}(\X),\partial S)$ and $S_i=\pi_{\partial S}(C_i(\X))$, where the $C_i(\X)$ are the sets introduced in Lemma \ref{rconvhullgeo}, we have
that, for all $i$: $d_H(S_i,C_i(\X))\leq \eps_n$. 
Due to Lemma \ref{rconvhullgeo},
we also have 
\begin{enumerate}
	\item $|\partial S|_{d-1}=\sum_i|S_i|_{d-1}$ and  $|\partial C_{\alpha'}(\X)|_{d-1}=\sum_i|C_i(\X)|_{d-1}$. 
	\item for every $i$ and all $x\in C_i(\X)$, $\langle \hat{\eta}_x, \eta_{\pi_{\partial S}(x)} \rangle\geq 1-\frac{2(\alpha+\alpha')}{\alpha \alpha'} \eps_n$.
\end{enumerate}

Thus, by Lemma \ref{NewGeometrie} we also have, for all $i$:

$$ \left(1-3\alpha\eps_n-\frac{64(\alpha+\alpha')}{\alpha \alpha'} \eps_n\right)^{\frac{d-1}{2}} \leq \frac{|C_i(\X)|_{d-1}}{|S_i|_{d-1}}\leq \left(1+3\alpha\eps_n+\frac{64(\alpha+\alpha')}{\alpha \alpha'} \eps_n\right)^{\frac{d-1}{2}}.$$

We then introduce $A=3\alpha+\frac{64(\alpha+\alpha')}{\alpha \alpha'}$, summing all the terms in the inequalities 

$$ \left(1-A \eps_n\right)^{\frac{d-1}{2}}|S_i|_{d-1} \leq |C_i(\X)|_{d-1}\leq \left(1+A\eps_n\right)^{\frac{d-1}{2}}|S_i|_{d-1}, $$

gives
$$ \left(1-A \eps_n\right)^{\frac{d-1}{2}}|\partial S|_{d-1} \leq |\partial C_{\alpha'}(\X)|_{d-1}\leq \left(1+A\eps_n\right)^{\frac{d-1}{2}}|\partial S|_{d-1}. $$
which concludes the proof.

\subsubsection{Proof of Corollary \ref{mainth2iid}}

We only need to check that the conditions of Theorem \ref{mainth2} are fulfilled, with probability one, for $n$ large enough. 
In \cite{crc:04} it is proved that 
$d_H(\X,S)\leq O((\ln n/n)^{1/d}$ e.a.s. so, with probability one for $n$ large enough it is upper bounded by $\frac{1}{3}\frac{\alpha\alpha'}{\alpha+\alpha'}$. 
In  \cite{rhull} it is proven that, with probability one for $n$ large enough, $d_H(\partial C_{\alpha'}(\X),\partial S)\leq \eps_n \leq c(\ln n/n)^{2/(d+1)}$ for
some given explicit constant $c$. 
Since $C_{\alpha'}(\X)^c$ is  a finite union of balls and affine half-spaces---that is,
$C_{\alpha'}(\X)^c=\bigcup_{i=1}^{N_1} E_i$ with  $E_i=\mathring{\mathcal{B}}(O_i,r_i)$ or 
$E_i=\{z \in \mathbb{R}^d, \langle u_j,z \rangle > a_i \}$---,
it follows that
$$\partial C_{\alpha'}(\X)=\bigcup_i \Big(\partial E_i \bigcap \Big( \bigcup_{j\neq i} E_j \Big)^c\Big).$$

Now define the $F_j$ as the connected components of the sets $\partial E_i \bigcap ( \bigcup_{j\neq i} E_j )^c$. Then, the $F_j$ are 
closed manifolds of dimension $d_j\leq (d-1)$, and are compact since $F_j \subset C_{\alpha'}(\X)$, which is compact.
Finally, because $\partial C_{\alpha'}(\X)$ is a $(d-1)$-dimensional manifold, we must have 
$\partial C_{\alpha'}(\X)=\cup_{j,d_j=d-1} F_j$ (i.e., the lower dimensional $F_k$ are included in $\cup_{j,d_j=d-1} F_j$). 
This concludes the proof of Corollary \ref{mainth2iid}.

\section*{Acknowledgements}

We would like to thank a referee and the AE for careful proofreading, and their helpful and positive suggestions    in a previous version of this manuscript. This research has been partially supported by grant FCE-1-2019-1-156054 (ANII-Uruguay).

\end{document}